\journal{arXiv}
\renewcommand*{\@pnumwidth}{3em}
\DeclareMathOperator{\ran}{ran}
\newcommand{\sgn}{\mathrm{sgn}} 
\newcommand{\gl}{\mathrm{GL}} 
\newcommand{\C}{\mathbb{C}}  
\newcommand{\R}{\mathbb{R}}
\newcommand{\Z}{\mathbb{Z}}
\newcommand{\N}{\mathbb{N}}
\renewcommand{\Im}{\operatorname{Im}}
\renewcommand{\Re}{\operatorname{Re}}
\newcommand{\F}{\mathcal{F}}
\newcommand{\id}{\mathrm{Id}}
\newcommand{\rb}[1]{\left(#1\right)}
\newcommand{\cb}[1]{\left\{#1\right\}}
\newcommand{\Hi}{\mathcal{H}}
\newcommand{\scp}[1]{\left\langle#1\right\rangle}
\newcommand{\gdw}{\Leftrightarrow}
\newcommand{\Span}{\mathrm{span}}
\newcommand{\ltwoC}[1][]{\ell^2\left(\Z^{#1};\C\right)}
\newcommand{\ltwoR}[1][]{\ell^2\left(\Z^{#1};\R\right)}
\newcommand{\CLone}[1][]{L^1\left(\R^{#1};\C\right)}
\newcommand{\RLtwo}[1][]{L^2\left(\R^{#1};\R\right)}
\newcommand{\CLtwo}[1][]{L^2\left(\R^{#1};\C\right)}
\newcommand{\fr}{\mathrm{Fr}}
\newcommand{\sur}{\mathrm{Sur}}
\newtheoremstyle{break}
{9pt}
{9pt}
{\itshape}
{}
{\bfseries}
{.}
{\newline}
{}
\theoremstyle{break}
\newtheorem{defn}{Definition}[section]
\newtheorem{theorem}[defn]{Theorem}
\newtheorem{lemma}[defn]{Lemma}
\newtheorem{corollary}[defn]{Corollary}
\newtheorem{remark}[defn]{Remark}
\newtheorem{example}[defn]{Example}
\newtheorem{proposition}[defn]{Proposition}
\newlist{thmlist}{enumerate}{1}
\setlist[thmlist]{label=(\roman{thmlisti}), ref=\thethm.(\roman{thmlisti}),noitemsep}
\newcommand{\FH}[1]{\textcolor{blue}{{#1}}}
\begin{document}

\begin{frontmatter}



\title{Rebricking frames and bases\footnote{Version of June 26, 2023}}


\author[1]{Thomas Fink}
\ead{fink\_thomas@gmx.de}
\author[1]{Brigitte Forster\corref{cor1}}
\ead{brigitte.forster@uni-passau.de}
\author[1]{Florian Heinrich}
\ead{florian.heinrich@uni-passau.de}
\cortext[cor1]{Corresponding author}

\address[1]{Fakult\"at f\"ur Informatik und Mathematik, Universit\"at Passau, Germany}
%

\begin{abstract}
In 1949, Denis Gabor introduced the ``complex signal'' (nowadays called ``analytic signal'') by combining a real function $f$ with its Hilbert transform $Hf$ to a complex function $f+ iHf$. His aim was to extract phase information, an idea that has inspired techniques as the monogenic signal and the complex dual tree wavelet transform. In this manuscript, we consider two questions: When do two real-valued bases or frames $\{f_{n} : n\in\N\}$ and $\{g_{n} : n\in\N\}$ form a complex basis or frame of the form $\{f_{n} + i g_{n}: n\in\N\}$? And for which bounded linear operators $A$ forms  $\{f_{n} + i A f_{n} : n\in\N\}$ a complex-valued  orthonormal basis, Riesz basis or frame, when $\{f_{n} : n\in\N\}$ is a real-valued orthonormal basis, Riesz basis or frame? 
We call this approach \emph{rebricking}. It is well-known that the analytic signals don't span the complex vector space $L^{2}(\R; \C)$, hence $H$ is not a rebricking operator. We give a full characterization of rebricking operators for bases, in particular orthonormal and Riesz bases, Parseval frames, and frames in general.
We also examine the special case of finite dimensional vector spaces and show that we can use any real, invertible matrix for rebricking if we allow for permutations in the imaginary part.  
\end{abstract}



\begin{keyword}
Real and imaginary parts of bases, frames and operators \sep Orthogonal and self-adjoint operators  
\sep  Point spectrum \sep Approximate spectrum  
\MSC[2020] 42C15 \sep 47B90 \sep 42C30 \sep 47B92 
\end{keyword}
\end{frontmatter}



\section{Introduction}
\label{sec Introduction}

Let $\Hi$ always denote a real separable Hilbert space. Then $\Hi + i\Hi$ is a complex Hilbert space. In this article, we ask two questions:

Question 1:  When do two real-valued bases or frames $\{f_{n} : n\in\N\}$ and $\{g_{n} : n\in\N\}$ in $\Hi$ form a complex basis or frame of the form $\{f_{n} + i g_{n}: n\in\N\}$ in $\Hi + i\Hi$?

As the $f_{n}$ usually are not linear functionals, the classical and unique way of complexifying real-valued linear forms $u$ on some complex Banach space $X$ via
$$
U(x) = u(x) -i u(ix) \quad \mbox{for all }x\in X
$$
is not applicable for arbitrary functions, see \cite[Ch. 3]{Rudin1991}. Therefore, we are interested in necessary and/or sufficient conditions on the families $\{f_{n} : n\in\N\}$ and $\{g_{n} : n\in\N\}$.

It is well-known that Riesz bases are equivalent, hence for every pair of Riesz bases $\{f_{n} : n\in\N\}$ and $\{g_{n} : n\in\N\}$ in a Hilbert space $\Hi$ there exists a bounded invertible linear operator $A:\Hi \to \Hi$ such that $g_{n} = A f_{n}$ for all $n\in\N$.

From this point of view our second question is related:

Question 2: For which real-valued bounded linear operators $A: \Hi \to \Hi$ is  $$\{f_{n} + i A f_{n} : n\in\N\}$$ a complex-valued  orthonormal basis, Riesz basis or frame, when $\{f_{n} : n\in\N\}$ is a real-valued orthonormal basis, Riesz basis or frame? 
We call this approach \emph{rebricking}. We are interested in necessary and sufficient properties on the operator $A$.

Similar questions have been asked in signal processing.
For example, already in 1946, Denis Gabor introduced the \emph{complex signal}, nowadays called \emph{analytic signal}, for one-dimensional real-valued functions $f$ \cite{gabor1946}: His idea was to consider $$f_{a} = f + iH(f),$$ where $H$ denotes the Hilbert transform. From a frequency perspective, the operation is ``Suppress the amplitudes belonging to negative frequencies, and multiply the amplitudes of positive frequencies by two'', as Gabor states. The complexification $f + iH(f)$ of real-valued functions $f$ gives access to their amplitude and phase information, as the analytic signal has a meaningful polar representation $f_{a} = \psi_{a} \exp(i \phi_{a})$: The instantaneous amplitude  $\psi_{a}$ and the  instantaneous frequency $\phi_{a}$ give interpretable insight into the properties of the signals over time.

A more recent approach of complexification is the Dual Tree Complex Wavelet Transform by Nick Kingsbury \cite{Kingsbury98,Kingsbury99,Selesnick05}. The motivation is to produce a nearly shift-invariant wavelet transform, while still maintaining a much lower redundancy than the undecimated discrete wavelet transform. The ideal complex wavelet $\psi_{c}$ is of the form
$$
\psi_{c}(t) = \psi_{r}(t) + i \psi_{i}(t), \quad t\in\R,
$$
where---in analogy to the analytic signal---$\psi_{r}$is a real-valued wavelet, and $\psi_{i} = H(\psi_{r})$ is the Hilbert transform of $\psi_{r}$. However, such an ideal complex wavelet cannot be achieved exactly, when the wavelets are required to have compact support. Hence usually the complex wavelet transform operates with approximately analytic wavelets \cite{Selesnick05}.

Our article is structured as follows: First we consider the simplest non-redundant space-spanning systems, i.e., Riesz bases and---as a special case---orthonormal bases. Here we also discuss the case in finite-dimensional vector spaces. In the next section, we analyze the rebricking of frames, in particular Parseval frames.

We collect some notation: With the inner product $\langle \bullet, \bullet \rangle_{\Hi}$ for the real Hilbert-space $\Hi$, the inner product of $\Hi + i\Hi$ is defined by
 \[\langle f, g\rangle = \langle f_r + if_i,g_r+ig_i\rangle := \langle f_r,g_r \rangle_\Hi + \langle f_i,g_i\rangle_\Hi + i\left(\langle f_i,g_r\rangle_\Hi - \langle f_r , g_i \rangle_\Hi\right)\] for $f,g \in \Hi + i\Hi$ where $f_r,g_r \in \Hi$ are the real and $f_i,g_i \in \Hi$ are the imaginary parts. It is easy to see that this definition fulfils the property of a complex inner product. 
For the norm induced by the inner product it is clear that \begin{align*}\norm{f}^2 &= \langle f_r + if_i,f_r + if_i\rangle = \langle f_r,f_r \rangle_\Hi + \langle f_i,f_i\rangle_\Hi + i\left(\langle f_i,f_r\rangle_\Hi - \langle f_r , f_i \rangle_\Hi\right)\\&= \norm{f_r}^2_\Hi + \norm{f_i}^2_\Hi + i\left(\langle f_r,f_i\rangle_\Hi - \langle f_r , f_i \rangle_\Hi\right) = \norm{f_r}^2_\Hi + \norm{f_i}^2_\Hi, 
\end{align*}
which is the Pythagorean Theorem.
In cases, where the underlying field of the vector space does not play a role, we denote by $X$ an arbitrary separable real or complex Hilbert space.

\begin{example}
\begin{itemize} \item [(i)]
	Let $\Hi = \R^n$ for $n \in \N$ equipped with the standard Euclidean inner product. Then $\Hi + i\Hi = \R^n + i\R^n = \C^n$ is equipped with the standard sesquilinear inner product.
	\item [(ii)] For this work the Hilbert space is often the space of square-integrable functions. Let $\Hi = \RLtwo$ be the real vector space of all real-valued square-integrable functions. Then $\Hi + i\Hi = \CLtwo$ is the complex function space of all complex-valued square-integrable functions.
	\end{itemize}
\end{example}

We denote by $\mathcal{L}(X)$ the vector space of all bounded linear operators $A: X \to X$, endowed with the operator norm. The spectrum of a linear operator is denoted by $\sigma(A)$, the resolvent set by $\rho(A)$.

We define the Fourier transform of a function $ f \in \CLone$ as \[\F f(\omega) := \int_{\R} f(x) e^{-2\pi i\omega x} \dif x \quad \mbox{for all }\omega\in\R.\] and extend it to $\CLtwo$ in the usual way. Similarly we define the Fourier transform for $L^2(\mathbb{T};\C)$ on the torus $\mathbb{T} := \R / \Z \cong [\, 0,1\,]$  as
\[\hat{\  } : L^2(\mathbb{T};\C) \to \ltwoC,\; \quad \hat{f}(n) := \int_{0}^{1} f(x) e^{-2\pi i n x} \dif x \quad \mbox{for all }n\in\Z,\]
and the inverse Fourier transform as \[\check{\  } : \ltwoC \to L^2(\mathbb{T};\C),\; \quad a = \{a_n :\,n \in \Z\} \mapsto \sum_{n \in \Z} a_n e^{2\pi i n \bullet}.\]

\section{Rebricking Bases}

If $\{f_{n} : n\in\N\}$ is a real-valued basis in $\Hi$, under which conditions on the linear operator $A$ is the rebricked family
$\{f_{n} +i A f_{n} :\, n \in \N\}$ a basis for $\Hi + i \Hi$? In this section we address this question for several types of bases and for infinite-dimensional as well as for finite-dimensional vector spaces. For the first case, we give a characterization of appropriate rebricking operators $A$. For the latter case, we develop new results on the permutation of bases. 

\subsection{Riesz Bases}

As first step we consider the class of Riesz bases and analyze the rebricking.

\begin{defn}[Riesz basis]
	A sequence $\{f_n:\ n \in \N\}$ in a  (real or complex) Hilbert space $X$ is called a Riesz basis  of $X$, if there exists an orthonormal basis $\{e_n:\ n\in \N\}$ and an automorphism $U \in \mathcal{L}(X)$ such that $f_n = Ue_n$ for all $n \in \N$. 
	\end{defn}
	
	As $U$ is a continuous automorphism there exist constants $0 < c < C < \infty$ such that for all $f \in X$ 
	\[c\norm{f}^2 \leq \sum_{n \in \N}\abs{\langle f,f_n\rangle}^2 \leq C \norm{f}^2.\]
The largest possible value for $c$ is $\frac{1}{\norm{U^{-1}}^2}$. The smallest for $C$ is $\norm{U}^2$. 
It is well-known that every Riesz basis is a frame. 
The extremal values of $c$ and $C$ are therefore called \textit{the, strict or sharp} {frame} bounds. 

If two Riesz bases $\{f_n:\ n \in \N\}, \{g_n:\ n \in \N\} \subset X$ allow for a representation \[f = \sum_{n \in \N} \langle f, f_n\rangle g_n\] for all $f \in X$, then they are called a \emph{dual pair} of Riesz bases. For further reading on Riesz bases we refer to \cite[Chapter 3.6]{Christensen2003}.

 As a first observation we show that every Riesz basis for $\Hi$ is also a Riesz basis of the canonically complexified Hilbert space $\Hi + i \Hi$.
\begin{lemma}
	$\{f_n:\ n \in \N\} \subset \Hi$ is a Riesz basis for $\Hi$ if and only $\{f_n:\ n \in \N\} \subset \Hi$ is a Riesz basis for $\Hi + i \Hi$.
\end{lemma}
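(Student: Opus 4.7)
The plan is to reduce everything to the Pythagorean identity $\norm{f_r+if_i}^2 = \norm{f_r}_\Hi^2 + \norm{f_i}_\Hi^2$ already established in the introduction, together with the standard equivalent characterization of a Riesz basis in a Hilbert space $X$: a sequence $\{f_n\}\subset X$ is a Riesz basis of $X$ if and only if its closed linear span equals $X$ and there exist constants $A,B>0$ such that $A\sum_n\abs{c_n}^2 \le \norm{\sum_n c_n f_n}^2 \le B\sum_n\abs{c_n}^2$ for every finitely supported scalar sequence $\{c_n\}$ (see \cite[Chapter 3.6]{Christensen2003}). Both directions then reduce to transferring the Riesz inequality and the completeness condition between the real and complex settings, one coordinate at a time.

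For the forward direction, I would start from $f_n = Ue_n$ with $\{e_n\}$ an ONB of $\Hi$ and $U\in\mathcal L(\Hi)$ an automorphism. A short calculation using the formula for the inner product on $\Hi+i\Hi$ shows that $\{e_n\}$ remains an ONB there, with any $f=f_r+if_i$ expanded by expanding $f_r$ and $f_i$ separately in $\Hi$. The canonical complexification $\widetilde U(h_1+ih_2):=Uh_1+iUh_2$ is then $\C$-linear, bounded with $\norm{\widetilde U}=\norm{U}$ by the Pythagorean identity, and admits the analogous complexification of $U^{-1}$ as a two-sided inverse; moreover $\widetilde U e_n=f_n$. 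Hence $\{f_n\}$ is a Riesz basis of $\Hi+i\Hi$ directly from the definition.

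For the converse, assume $\{f_n\}\subset\Hi$ is a Riesz basis of $\Hi+i\Hi$. Restricting its complex Riesz inequality to real coefficients $c_n\in\R$ yields the real Riesz inequality in $\Hi$ with the same constants, since $\sum c_n f_n\in\Hi$ and the $\Hi+i\Hi$-norm agrees with the $\Hi$-norm on this subspace. To obtain completeness of $\{f_n\}$ in $\Hi$, I would take $g\in\Hi$ and approximate it in $\Hi+i\Hi$ by a finite $\C$-linear combination $\sum_k(a_k+ib_k)f_k$; the Pythagorean identity forces $\norm{\sum_k a_k f_k-g}_\Hi^2+\norm{\sum_k b_k f_k}_\Hi^2<\varepsilon^2$, so the real part already approximates $g$ in $\Hi$. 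The characterization above then identifies $\{f_n\}$ as a Riesz basis of $\Hi$. The whole argument is essentially bookkeeping; the only point requiring attention is that the Riesz constants are preserved when passing between real and complex coefficients, which is exactly what the Pythagorean identity delivers.
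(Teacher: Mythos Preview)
Your proposal is correct. The backward direction essentially coincides with the paper's, which simply notes that every real finitely supported sequence is also a complex one, so the Riesz inequality in $\Hi+i\Hi$ restricts to $\Hi$; you additionally verify completeness in $\Hi$ explicitly, which the paper leaves implicit in its appeal to \cite[Theorem 3.6.6]{Christensen2003}.

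The forward direction, however, follows a genuinely different route. The paper stays with the Riesz-inequality characterization throughout: it splits a complex coefficient sequence $c_n=c_n^{(r)}+ic_n^{(i)}$, uses the Pythagorean identity $\norm{\sum c_n f_n}^2=\norm{\sum c_n^{(r)}f_n}_\Hi^2+\norm{\sum c_n^{(i)}f_n}_\Hi^2$, and applies the real Riesz bounds to each piece. You instead return to the original definition and complexify the automorphism $U$ to $\widetilde U$, checking that $\widetilde U$ is again an automorphism of $\Hi+i\Hi$ and that $\{e_n\}$ remains an orthonormal basis there (the paper proves the latter separately as Lemma~\ref{L e n ortho basis in H und H iH}). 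Your argument is slightly more conceptual and immediately shows that the Riesz bounds are preserved exactly, while the paper's argument is more hands-on and makes the role of the Pythagorean identity fully explicit; both are short and neither has a real advantage over the other.
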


\begin{proof}
	 According to \cite[Theorem 3.6.6]{Christensen2003}, $\{f_n:\ n \in \N\} \subset \Hi$ is a Riesz basis for $\Hi$ if and only if there exist constants $d,D > 0$ such that for every finite scalar sequence $\{c_n : n \in \N\} \subset \R$  
	 \[d \sum_{n=1}^\infty \abs{c_n}^2 \leq \norm{\sum_{n=1}^\infty c_nf_n}^2 \leq D \sum_{n=1}^\infty \abs{c_n}^2.\]
	 To show that $\{f_n:\ n \in \N\} \in \Hi$ is a Riesz basis for $\Hi + i \Hi$ we suppose $c_n = c^{(r)}_n + i c^{(i)}_n$ is a finite complex sequence decomposed into its real and imaginary part.
	 Then we have 
	 \begin{eqnarray*}
	 d \sum_{n=1}^\infty \abs{c_n}^2 & = & d \sum_{n=1}^\infty \abs{c^{(r)}_n}^2 + d \sum_{n=1}^\infty \abs{c^{(i)}_n}^2 \quad\leq \quad \norm{\sum_{n=1}^\infty c^{(r)}_nf_n}^2 + \norm{\sum_{n=1}^\infty c^{(i)}_nf_n}^2 \\
	 &=& \norm{\sum_{n=1}^\infty \left(c^{(r)}_n + ic^{(i)}_n\right)f_n}^2 = \norm{\sum_{n=1}^\infty c_nf_n}^2.
	 \end{eqnarray*}
	 The upper bound can be proved the same way. The converse is trivial as every real sequence is a complex one.
\end{proof}

Now we wish to answer the question whether the rebricking of two Riesz bases is a Riesz basis again.
I.e. let $\{f_n:\ n \in \N\}, \{g_n:\ n \in \N\} \subset \Hi$ be two Riesz bases; under which conditions is \[\{f_n + ig_n : n \in \N \} \subset \Hi + i\Hi\] a complex Riesz basis? To this end, let $\{e_n:\ n \in \N\} \subset \Hi$ be an orthonormal basis. Let $U_f,U_g \in \mathcal{L}(\Hi)$ be continuously invertible such that $f_n = U_f e_n$ and $g_n = U_g e_n$ for all $n \in \N$. Then the linear operator $A := U_gU_f^{-1}$ maps $g_n = A f_n$ for all $n \in \N$. In particular, $A$ does not depend on the choice of the orthonormal basis $\{e_n : n\in\N\}$, and therefore is also independent of $U_f, U_g$ as $g_n = A f_n$ for all $n \in \N$ determines the linear operator completely, because $\mbox{span} \{f_n : n \in \N \}$ is dense in $\Hi$. Therefore any two Riesz bases are equivalent by virtue of a topological isomorphism $A$.  Equivalently we  can approach the above-raised question by considering \[\{\tilde{f}_n := Bf_n := f_n + i Af_n : n \in \N \},\] and asking under which conditions this system is a Riesz basis of $\Hi + i \Hi$. We do this by investigating the required properties of $A$ resp. $B$ being a topological isomorphism  (and hence $\tilde{f}_n$ being a Riesz basis). It will turn out that this change of view is of great benefit for two reasons:
\begin{enumerate}[label=(\roman*)]
	\item There is a much richer theory for linear operators than for Riesz bases.
	\item Most notably: The conditions on $A$ will be independent of the choice of a specific Riesz basis. Thus a suitable $A$ can be used to rebrick any Riesz basis.
\end{enumerate}

As a first step into the theory we ask under which conditions a bounded linear operator $A$ can be used to build complex Riesz bases from real ones.

\begin{theorem}
	\label{thm_lift_riesz_bases}
	Let $A \in \mathcal{L}(\Hi)$ be an isomorphism. Then, the following three statements are equivalent:
	\begin{enumerate}[label=(\roman*)]
		\item For a real Riesz basis $\{f_n:\ n \in \N\}$ of $\Hi$, the family $\cb{Bf_n := f_n+iAf_n: n\in\N}$ is a complex Riesz basis in $\Hi+i\Hi$.
		\item For every real Riesz basis $\{f_n:\ n \in \N\}$ of $\Hi$, the family $\cb{Bf_n := f_n+iAf_n: n\in\N}$ is a complex Riesz basis in $\Hi+i\Hi$.
		\item $i\in\rho(A)$, i.e., $i$ is in the resolvent set $\rho(A)$ of $A$, hence not an element of the spectrum of $A$.
		\item $-i \in \rho(A^*)$.
	\end{enumerate}
\end{theorem}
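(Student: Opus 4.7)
My plan is to reduce the entire statement to the invertibility of a single bounded operator on the complexified Hilbert space. The real-linear operator $A \in \mathcal{L}(\Hi)$ extends uniquely to a $\C$-linear operator on $\Hi + i\Hi$ via $A(f_r + if_i) := Af_r + iAf_i$, and I will denote this extension by the same symbol $A$. By the preceding lemma every real Riesz basis $\{f_n\}$ of $\Hi$ is also a (complex) Riesz basis of $\Hi + i\Hi$, so the rebricked family is exactly $\{Bf_n : n \in \N\}$ for the $\C$-linear operator $B := I + iA$ acting on $\Hi + i\Hi$. This turns the question into: when is $B$ a topological isomorphism of $\Hi + i\Hi$?

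The key algebraic identity is
\[
B \;=\; I + iA \;=\; i\,(A - iI),
\]
so $B$ is a topological isomorphism on $\Hi + i\Hi$ if and only if $A - iI$ is invertible, i.e., if and only if $i \in \rho(A)$. With this in hand, (iii) $\Rightarrow$ (ii) is immediate, because $Bf_n = (BU)e_n$ for any factorisation $f_n = Ue_n$ of a real Riesz basis, and $BU$ is an isomorphism; (ii) $\Rightarrow$ (i) is trivial; and for (i) $\Rightarrow$ (iii) I would invoke the equivalence of Riesz bases: if $\{f_n\}$ and $\{Bf_n\}$ are both Riesz bases, there is a unique topological isomorphism $T$ with $Tf_n = Bf_n$, and since $T$ and $B$ agree on the dense complex span of $\{f_n\}$ and are both bounded, $T = B$, so $B$ itself must be an isomorphism.

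For (iii) $\Leftrightarrow$ (iv) I would use that adjoint and complexification commute, so that $(A - iI)^* = A^* + iI$ as operators on $\Hi + i\Hi$, together with the standard fact that an operator on a complex Hilbert space is invertible if and only if its adjoint is. Hence $i \in \rho(A)$ iff $A - iI$ is invertible iff $A^* + iI$ is invertible iff $-i \in \rho(A^*)$.

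The main subtlety---and really the only obstacle---is making precise what $\rho(A)$ means when $A$ is a priori an $\R$-linear operator on a real Hilbert space: a non-real spectral point only makes sense after passing to the $\C$-linear extension of $A$. Once this identification is fixed, together with the compatibility of adjoints with complexification, the argument collapses to the single factorisation $B = i(A - iI)$ combined with one invocation of the equivalence-of-Riesz-bases theorem.
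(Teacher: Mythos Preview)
Your proposal is correct and follows essentially the same route as the paper: both hinge on the factorisation $B = I + iA = i(A - iI)$, reducing the Riesz-basis question to the invertibility of $A - iI$ on the complexified space, with (iii) $\Leftrightarrow$ (iv) handled via the conjugate symmetry of spectra under adjoints. The paper's proof is a terse two-liner, whereas you spell out the (i) $\Rightarrow$ (iii) direction via the uniqueness of the isomorphism between two Riesz bases; note that you could shorten this step by writing $f_n = Ue_n$ for an isomorphism $U$ and observing directly that $\{BUe_n\}$ is a Riesz basis iff $BU$ is an isomorphism iff $B$ is.
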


\begin{proof}
	$\cb{Bf_n := f_n+iAf_n: n\in\N}$ is a Riesz basis in $\Hi+i\Hi$ if and only if $B = i (A - i\,\id)$ is bounded and invertible on $\Hi + i \Hi$. This is the equivalence of (i), (ii), and (iii).  
	The last two items are equivalent because spectra of two adjoint operators are complex conjugates of each other.
\end{proof}

Notice that an operator $A \in \mathcal{L}(\Hi)$ fulfilling one of the equivalent conditions in \ref{thm_lift_riesz_bases} is independent of the Riesz basis it rebricks, i.e. is able to complexify every Riesz basis via $B = \id +iA$. This observation gives rise to the following definition:

\begin{defn}[Rebrickability]
	\label{D rebrickability Riesz bases}
	\begin{enumerate}[label=(\roman*)]
		\item Let $\{f_n:\ n \in \N\}$, $\{g_n:\ n \in \N\}$ be Riesz bases of $\Hi$. They are called \emph{rebrickable} if $\{\tilde{f}_n := f_n + i g_n : n \in \N \}$ is a Riesz basis of $\Hi + i\Hi$. In this case $\{\tilde{f}_n : n \in \N \}$ is called a complexified (Riesz) basis.
		\item Suppose $A \in \mathcal{L}(\Hi)$. The operator $A$ is called \emph{rebricking operator (for Riesz bases)} if $\{\tilde{f}_n := f_n + i Af_n : n \in \N \}$ is a complex Riesz basis for $\Hi + i \Hi$ for every choice of a Riesz basis $\{f_n \,:\, n\in\N\}$.
	\end{enumerate}	
\end{defn}

Now we explore properties of the rebrickability relation, i.e., the set of pairs of Riesz bases that together can form complexified Riesz bases.  
\begin{lemma}[Properties of the rebrickability relation]
\label{L Riesz case Properties of the rebrickability relation}
	Let $\{f_n:\ n \in \N\}$, $\{g_n:\ n \in \N\}$ and $\{h_n:\ n \in \N\}$ be Riesz bases of $\Hi$. Then the following properties hold true:
	
	\begin{enumerate}[label=(\roman*)]
		\item Reflexivity: $\{f_n + if_n:\ n \in \N\}$ is a complex Riesz basis.
		\item Symmetry:  $\{f_n + ig_n:\ n \in \N\}$ is a complex Riesz basis if and only if $\{g_n + if_n:\ n \in \N\}$ is a complex Riesz basis.
		\item Non-Transitivity: Let $\{f_n + ig_n:\ n \in \N\}$ and $\{g_n + ih_n:\ n \in \N\}$ be complex Riesz bases then $\{f_n + ih_n:\ n \in \N\}$ is \textit{not necessarily} a Riesz basis.
	\end{enumerate}
\end{lemma}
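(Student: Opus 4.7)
I plan to treat each of the three items separately, relating the sequences to suitable operators and invoking Theorem~\ref{thm_lift_riesz_bases}, though for (ii) I shortcut the operator argument using automorphisms of $\Hi+i\Hi$.

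For reflexivity (i), I would apply Theorem~\ref{thm_lift_riesz_bases} with $A = \id$: since $\sigma(\id) = \{1\}$ misses $i$, the family $\{f_n + if_n\}$ is a complex Riesz basis for any choice of starting Riesz basis $\{f_n\}$.

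For symmetry (ii), rather than passing through the bounded invertible $A$ defined by $g_n = Af_n$ and checking that $i \in \rho(A)$ iff $i \in \rho(A^{-1})$ (which works but needs the spectral symmetry of the real operator $A$), my plan is to use two elementary facts about $\Hi+i\Hi$: multiplication by any non-zero complex scalar is a linear automorphism, and complex conjugation $C \colon f_r + if_i \mapsto f_r - if_i$ is a conjugate-linear isometry. Both map Riesz bases to Riesz bases; for conjugation this is because if $f = \sum c_n e_n$ then $Cf = \sum \overline{c_n}\, C e_n$, so uniqueness of expansions and frame bounds are preserved. Then the chain
\[
\{f_n + ig_n\} \xrightarrow{\;\cdot(-i)\;} \{g_n - if_n\} \xrightarrow{\;C\;} \{g_n + if_n\}
\]
(using in the last step that $f_n, g_n \in \Hi$ are real) establishes symmetry.

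For non-transitivity (iii), I would exhibit an explicit counterexample. By Theorem~\ref{thm_lift_riesz_bases}, what I need is a bounded invertible $R \in \mathcal{L}(\Hi)$ with $i \in \rho(R)$ but $i \in \sigma(R^2)$. The main (and essentially only) obstacle is to recognise that planar rotations supply such $R$: take $R$ to be rotation by $\pi/4$ on $\R^2$, so that $\sigma(R) = \{e^{\pm i\pi/4}\}$ avoids $i$, whereas $R^2$ is rotation by $\pi/2$ with $\sigma(R^2) = \{\pm i\}$. Setting $\Hi = \R^2$ (or embedding into an infinite-dimensional space by letting $R$ act as the identity on a complement), $\{f_n\}$ the standard basis, $g_n := R f_n$, $h_n := R^2 f_n$, both $\{f_n + ig_n\}$ and $\{g_n + ih_n\}$ are Riesz bases by Theorem~\ref{thm_lift_riesz_bases}, while $\{f_n + ih_n\}$ is not, since $\id + iR^2$ fails to be invertible.
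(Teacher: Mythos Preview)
Your proof is correct. Parts (i) and (iii) match the paper's arguments closely: the paper simply notes $\{(1+i)f_n\}$ is a Riesz basis for (i), and for (iii) constructs the same counterexample in slightly more abstract form (two operators $A_1,A_2$ sharing the eigenvalue $\tfrac{1+i}{\sqrt 2}$ on a common eigenspace, so that $A_2A_1$ has eigenvalue $i$); your rotation by $\pi/4$ is the canonical concrete instance.

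Part (ii) is where you diverge. The paper argues via the operator $A$ with $g_n = Af_n$, observing that then $f_n = A^{-1}g_n$, and concludes; implicitly this uses that $i\in\rho(A)\Leftrightarrow i\in\rho(A^{-1})$, which in turn relies on $\sigma(A)$ being conjugation-symmetric because $A$ is real. Your route through scalar multiplication and complex conjugation sidesteps the spectral reasoning entirely and is more self-contained. The paper's argument, on the other hand, stays within the ``rebricking operator'' framework and makes the symmetry $A\leftrightarrow A^{-1}$ explicit, which fits the surrounding narrative better. Both are short and valid.
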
	
\begin{proof}
(i) Reflexivity: $\{f_n + if_n:\ n \in \N\} = \{(1+i)f_n :\ n \in \N\}$ is obviously a complex Riesz basis.

(ii) Symmetry: Suppose $\{f_n + ig_n:\ n \in \N\}$ is a complex Riesz basis. Then there is an operator $A \in \mathcal{L}(\Hi)$ such that $g_n = A f_n$ for all $n \in \N$. Thus $f_n = A^{-1}g_n$ for all $n \in \N$ and therefore $\{g_n + if_n:\ n \in \N\}$ is a complex Riesz basis.

(iii) Non-Transitivity: We give a counter example. Let $A_1, A_2 \in \mathcal{L}(\Hi)$ be invertible having the eigenvalue  $\frac{1+i}{\sqrt{2}}$ and the same corresponding eigenspace, but not $i$ as a spectral value. Hence $A_{1}$ and $A_{2}$ are rebricking operators. Now let $g_n := A_1 f_n$ and $h_n := A_2 g_n$ for all $n \in \N$. Clearly $\{f_n + ig_n:\ n \in \N\}$ and $\{g_n + ih_n:\ n \in \N\}$ are complex Riesz bases, but $\{f_n + ih_n:\ n \in \N\}$ is not because $h_n = A_2A_1f_n$ for all $n \in \N$, however, $A_2A_1$ having an eigenvalue $\frac{1+i}{\sqrt{2}}\cdot \frac{1+i}{\sqrt{2}} =i$ is not a rebricking operator.
\end{proof}

In general it is a non-trivial task to compute the spectrum of an invertible and bounded linear operator $A$, or to only check whether $i \in \sigma(A)$, i.e., whether $i$ is in the spectrum of $A$, or not. But there are some special cases which simplify the situation a lot.
\begin{lemma}
	Let $A \in \mathcal{L}(\Hi)$ be invertible. Then $A$ is a rebricking operator for Riesz bases if one of the following sufficient conditions holds: 
\begin{enumerate}[label=(\roman*)]
	\item $A$ is self-adjoint. 
	\item The operator norm of $A$ is bounded by 1: $\norm{A} < 1$.
\end{enumerate}  	
\end{lemma}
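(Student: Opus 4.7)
The plan is to invoke Theorem \ref{thm_lift_riesz_bases} in both cases. That theorem reduces rebrickability of $A$ to the single condition $i\in\rho(A)$, where the spectrum is computed for the canonical extension $A(f_r+if_i):=Af_r+iAf_i$ of $A$ to the complex Hilbert space $\Hi+i\Hi$. So for each hypothesis I would just exhibit $i$ as a regular point.

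For (ii), I would give a one-line spectral-radius argument: since the spectral radius $r(A)$ of the complexified operator satisfies $r(A)\le\|A\|<1=|i|$, the point $i$ lies outside $\sigma(A)$, hence in $\rho(A)$. An equivalent presentation is the Neumann series: from $\|iA\|=\|A\|<1$ the operator $\id+iA$ is invertible on $\Hi+i\Hi$, and because $A-i\,\id=-i(\id+iA)$, the operator $A-i\,\id$ is invertible as well.

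For (i), the strategy is to show that the complexification of a self-adjoint operator on $\Hi$ is self-adjoint on $\Hi+i\Hi$, and then to use the classical fact that self-adjoint operators on a complex Hilbert space have real spectrum; in particular $i\notin\sigma(A)$. To check self-adjointness on $\Hi+i\Hi$, I would write two elements $f=f_r+if_i$ and $g=g_r+ig_i$, expand $\scp{Af,g}$ using the complex inner product formula stated in the introduction, push $A$ across each of the four real inner products using $A^\ast=A$ on $\Hi$, and reassemble the terms to obtain $\scp{f,Ag}$. This is a short direct calculation that I will not grind through here.

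The main (minor) obstacle is precisely this bookkeeping step in (i), ensuring that the real self-adjointness survives when one passes to the complex inner product with its imaginary cross-terms $\scp{f_i,g_r}_\Hi-\scp{f_r,g_i}_\Hi$; once this is done, both parts reduce to well-known spectral facts and the theorem is immediate from Theorem \ref{thm_lift_riesz_bases}.
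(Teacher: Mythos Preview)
Your proposal is correct and follows essentially the same route as the paper: reduce to $i\in\rho(A)$ via Theorem \ref{thm_lift_riesz_bases}, then for (i) invoke that self-adjoint operators have real spectrum and for (ii) use the spectral-radius bound $r(A)\le\|A\|<1$. The only difference is that you are explicit about verifying that the complexified operator remains self-adjoint on $\Hi+i\Hi$, a point the paper takes for granted.
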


\begin{proof}
	\begin{enumerate}[label=(\roman*)]
		\item $A$ is self-adjoint, hence the spectrum is real.
		\item If $\norm{A} < 1$ then the spectral values of $A$ have a modulus smaller than one.
	\end{enumerate} 
\end{proof}

It is also possible to transform the complex problem into a real one.
\begin{lemma}
	\label{lem:complex-bijective-to-real-bojective}
	Let $A \in \mathcal{L}(\Hi)$ be an isomorphism. Then the following are equivalent: 
	\begin{enumerate}[label=(\roman*)]
		\item $\id + iA$ is bijective in $\Hi + i\Hi$.
		\item $\id - iA$ is bijective in $\Hi + i\Hi$.
		\item $\id + A^2$ is bijective in $\Hi$.
	\end{enumerate}
\end{lemma}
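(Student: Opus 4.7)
The plan is to exploit the algebraic factorization
\[
(\id + iA)(\id - iA) = (\id - iA)(\id + iA) = \id + A^{2},
\]
which holds on $\Hi + i\Hi$ because $A$ commutes with scalars and with itself. The three statements then become essentially one statement about a single commutative diagram of operators, with the only real content being how to pass between $\Hi$ and $\Hi + i\Hi$.

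First I would establish (i) $\Leftrightarrow$ (ii) by the conjugation trick. Let $J\colon \Hi + i\Hi \to \Hi + i\Hi$, $J(x+iy) := x - iy$, be complex conjugation. Since $A$ maps $\Hi$ to $\Hi$, it commutes with $J$, and one checks directly that $J(\id + iA)J = \id - iA$. Because $J$ is a bijection (in fact an involution), $\id + iA$ is bijective on $\Hi + i\Hi$ if and only if $\id - iA$ is.

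Next I would handle (i), (ii) $\Rightarrow$ (iii). Given both factors bijective on $\Hi + i\Hi$, the product $\id + A^{2}$ is bijective on $\Hi + i\Hi$ by the above factorization. Since $A$ is real, $\id + A^{2}$ maps $\Hi$ into $\Hi$ and $i\Hi$ into $i\Hi$, so its bijectivity on $\Hi + i\Hi$ restricts to bijectivity on $\Hi$ by splitting into real and imaginary parts.

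Finally, for (iii) $\Rightarrow$ (i) (and hence (ii)), I would extend $\id + A^{2}\colon \Hi \to \Hi$ componentwise to an operator on $\Hi + i\Hi$, which remains bijective. From the factorization $(\id + iA)(\id - iA) = \id + A^{2}$ being bijective, one immediately gets that $\id - iA$ is injective and $\id + iA$ is surjective; from the commuted version $(\id - iA)(\id + iA) = \id + A^{2}$ one gets the converse. Combining these, both $\id \pm iA$ are bijective. The only subtlety worth stating carefully will be the restriction/extension argument between $\Hi$ and $\Hi + i\Hi$ in steps two and three, but this is routine once one notes that $A$ being real-valued makes $\id + A^{2}$ a block-diagonal operator in the decomposition $\Hi + i\Hi = \Hi \oplus i\Hi$.
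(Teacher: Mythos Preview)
Your proposal is correct and follows essentially the same route as the paper: both rely on the factorization $\id + A^{2} = (\id + iA)(\id - iA)$ and on the fact that a real operator acts block-diagonally on $\Hi \oplus i\Hi$. The only cosmetic differences are that the paper phrases (i) $\Leftrightarrow$ (ii) as ``the spectrum of a real operator is invariant under complex conjugation'' (your conjugation operator $J$ is exactly what underlies that statement), and that the paper proves (iii) $\Rightarrow$ (i) by contraposition rather than by your direct injective/surjective splitting from the two commuted factorizations.
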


\begin{proof}
	(i) $\Leftrightarrow$ (ii):	
	$A$ is a real linear operator, and the spectrum of real operators is invariant under complex conjugation.
	
	(i) $\Rightarrow$ (iii): 
	$\id + A^2 = (\id +iA)(\id -iA)$. Thus $\id + A^2$ is bijective because it is a composition of two bijective operators.
	
	(i) $\Leftarrow$ (iii):
	We assume that $\id + iA$ is not bijective in $\Hi + i\Hi$. Then $\id + A^2 = (\id +iA)(\id -iA)$ is not bijective in $\Hi + i\Hi$. $\id + A^2$ is a real operator, therefore the real and imaginary part in $\Hi + i\Hi$ can be mapped independently. Thus $\id + A^2$ is not bijective in $\Hi$. This is a contradiction.
\end{proof}

\begin{example}[The Hilbert transform]
	The Hilbert transform $H$ on the real line is defined as Cauchy-Principle Value of the integral over a singular function
	$$
	H: \RLtwo \to \RLtwo,\; Hf(x) = \frac{1}{\pi} P.V. \int_{\R} \frac{f(y)}{x-y}\, dy \quad \mbox{for almost all } x\in\R.
	$$
	A pair of functions $f,g \in \RLtwo$ satisfying $g = Hf$ constitutes a so-called \emph{Hilbert transform pair} 
	\cite[Sec. 1.3]{kingvol1}.
	For $f\in \CLtwo$, the Hilbert transform is an isometric isomorphism: 
	$$\norm{f}_{2}  = \norm{Hf}_{2}, $$
	satisfies the inversion property
	$$
	H^{2}f = -f
	$$
	By \ref{lem:complex-bijective-to-real-bojective} we can see that $H$ has $\pm i$ as spectral value and therefore it is not suitable as a rebricking operator.
	
	Let $\{f_{n} : n\in\N\} \subset \RLtwo$ be a Riesz basis.
	As $H$ is an isometric isomorphism, $\{ H f_{n} : n\in\N\}$ is also a Riesz basis of $\RLtwo$, and $f_{n}$ and $H f_{n}$ form Hilbert pairs for every $n$. 
	However $\{f_{n} + i H f_{n} : n\in\N\}$ is not a Riesz basis for $\CLtwo$. Its span is not even dense in $\CLtwo$, because the operator $\id +i H$ possesses a non-trivial kernel:
	$$
	(\id + iH)  (f_{n} - i Hf_{n}) = f_{n} + i H f_{n} -i Hf_{n} - f_{n} = 0 
	$$
	for every real basis element $f_{n}$.
	
	Consequently, considering the analytic signal $(\id + i H) f$  for all real-valued function $\RLtwo$ does not generate the whole complex space $\CLtwo$.
	\end{example}

For a Riesz basis $\{ f_n : n \in \N \} \subset \Hi$ (analysis basis) the dual Riesz basis $\{ g_n : n \in \N \} \subset \Hi$ (synthesis basis) is needed to represent $f\in \Hi$: 
$$f = \sum_{n \in \N} \langle f, f_n \rangle g_n.$$
In general, the analysis Riesz basis is different from the synthesis Riesz basis. Therefore, it is desirable to rebrick the basis and its dual basis simultaneously, and in the same way. Special case: Riesz bases whose analysis basis and synthesis basis coincide are indeed orthonormal bases.

\begin{lemma}\label{L rebricked dual Riesz basis}
	Let $\{f_n:\ n \in \N\} \subset \Hi$ and $\{g_n:\ n \in \N\} \subset \Hi$ be a pair of dual Riesz bases. Suppose $A\in \mathcal{L}(\Hi)$ is an isomorphism and a rebricking operator. Let $B= \id + iA$. Then the rebricked Riesz basis $\{Bf_n : n \in \N \}  \subset \Hi + i\Hi $ has a rebricked dual Riesz basis $\{\left(B^*\right)^{-1}g_n : n \in \N \} \subset \Hi + i\Hi$.
\end{lemma}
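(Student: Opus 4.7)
The plan is to reduce the duality expansion for the rebricked system $\{Bf_n\}$ to the already-known duality of the pair $\{f_n\}, \{g_n\}$, simply by moving $B$ from one side of the inner product to the other. First I would record the structural facts: because $A$ is a rebricking operator, $B = \id + iA$ is a bounded invertible operator on $\Hi + i\Hi$ by Theorem \ref{thm_lift_riesz_bases}, and hence $B^\ast$ (and therefore $(B^\ast)^{-1}$) is bounded invertible on $\Hi + i\Hi$ as well. Consequently both $\{Bf_n : n\in\N\}$ and $\{(B^\ast)^{-1}g_n : n\in\N\}$ are Riesz bases of $\Hi + i\Hi$, being the images of the Riesz bases $\{f_n\}$ and $\{g_n\}$ of $\Hi + i\Hi$ (the latter viewed as complex Riesz bases via the first lemma of this subsection) under topological isomorphisms.

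Next I would verify the duality relation. Given any $f \in \Hi + i\Hi$, apply the dual-pair reconstruction of $\{f_n\}, \{g_n\}$ (valid in $\Hi + i\Hi$ by the first lemma in this section) to the vector $B^\ast f$:
\[
B^\ast f \;=\; \sum_{n\in\N} \langle B^\ast f, f_n\rangle\, g_n \;=\; \sum_{n\in\N} \langle f, Bf_n\rangle\, g_n .
\]
Since $(B^\ast)^{-1}$ is bounded, it commutes with the norm-convergent sum, so
\[
f \;=\; (B^\ast)^{-1} B^\ast f \;=\; \sum_{n\in\N} \langle f, Bf_n\rangle\, (B^\ast)^{-1} g_n,
\]
which is exactly the statement that $\{(B^\ast)^{-1}g_n\}$ is the Riesz basis dual to $\{Bf_n\}$ in $\Hi + i\Hi$.

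There is no substantial obstacle in the argument; the only care needed is bookkeeping around the complexified inner product. In particular, $B^\ast$ has to be understood as the adjoint in $\Hi + i\Hi$, so that $B^\ast = \id - iA^\ast$ (with the sign flip from the complex conjugation of $i$), and the identity $\langle B^\ast f, f_n\rangle = \langle f, Bf_n\rangle$ is the defining adjoint relation in the complex Hilbert space. Uniqueness of the dual Riesz basis then ensures that $\{(B^\ast)^{-1}g_n\}$ is \emph{the} dual of $\{Bf_n\}$, and not just \emph{a} biorthogonal family.
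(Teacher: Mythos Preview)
Your proof is correct. It differs from the paper's argument in method: the paper writes $f_n = Ue_n$ for some orthonormal basis $\{e_n\}$ and invertible $U$, invokes the standard formula that the dual basis is $g_n = (U^*)^{-1}e_n$, and then observes that the dual of $\{BUe_n\}$ is $\{((BU)^*)^{-1}e_n\} = \{(B^*)^{-1}(U^*)^{-1}e_n\} = \{(B^*)^{-1}g_n\}$. You instead verify the reconstruction identity $f = \sum_n \langle f, Bf_n\rangle (B^*)^{-1}g_n$ directly by pushing $B^*$ across the inner product and using that $(B^*)^{-1}$ commutes with the convergent sum. Your route is slightly more self-contained (it does not appeal to the $U$--$e_n$ representation or the dual formula from \cite{Christensen2003}), while the paper's is shorter once that formula is in hand. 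One small remark: the lemma you cite for the complexified duality only asserts that a real Riesz basis is a Riesz basis for $\Hi + i\Hi$; that the pair $\{f_n\},\{g_n\}$ remains a \emph{dual} pair there is an additional (easy) step, obtained by splitting $f = f_r + if_i$ and applying the real reconstruction to each part.
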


\begin{proof}
	Let $\{f_n= U e_n:\ n \in \N\} \subset \Hi$ be an arbitrary Riesz basis generated via some invertible $U\in \mathcal{L}(\Hi)$ from an orthonormal basis $\{e_n:\ n \in \N\} \subset \Hi$. It is well-known that its unique dual Riesz basis is given by $g_n := \left(U^*\right)^{-1}e_n$, see \cite[Sec. 3.6]{Christensen2003}.
Thus the rebricked dual basis can be computed by
\[\left(\left(BU\right)^*\right)^{-1}e_n = \left(U^*B^*\right)^{-1}e_n = \left(B^*\right)^{-1}\left(U^*\right)^{-1}e_n = \left(B^*\right)^{-1}g_n.\]
\end{proof}

We see from this Lemma that for rebricking a given pair of basis and dual-basis the operator $U$ and the associated orthonormal basis are not required. It suffices to compute $\left(B^*\right)^{-1}$ which can be hard enough in some cases. In general, we cannot expect $\left(B^*\right)^{-1}$ to have a similar structure as $B = \id + iA$. However, in this sense, the linear operator $B$ is enough to rebrick the pair. 

Let $\{f_n:\ n \in \N\} \subset \Hi$ and $\{g_n:\ n \in \N\} \subset \Hi$ be a pair of dual Riesz bases and  $\{\tilde{f}_n:\ n \in \N\} \subset \Hi +i\Hi$ and $\{\tilde{g}_n:\ n \in \N\} \subset \Hi + i\Hi$ be the rebricked pair of dual Riesz bases. Then the real part $\Re \tilde{f}_n = f_n$. However, in general it is not true that the rebricked dual Riesz basis fulfills $\Re \tilde{g}_n = g_n$. The reason is that $\left(B^*\right)^{-1}$ in general does  not preserve the real part. In fact, the real part is never preserved exactly, but only up to a constant, as the following Lemma shows.

\begin{lemma}
	\label{lemma:dual_frame_preserve_real_part}
	Let $\lambda \in  \R \setminus \{0,1\}$. Under the same conditions as in Lemma \ref{L rebricked dual Riesz basis}  the following are equivalent:
	\begin{enumerate}[label=(\roman*)]
		\item $\Re\left( (B^*)^{-1} g_n\right) = \lambda g_n$ for all $n \in \N$.
		\item $A^{-1} = \frac{\lambda}{1-\lambda}A$.
	\end{enumerate}
\end{lemma}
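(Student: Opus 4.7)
The plan is to compute $(B^*)^{-1}$ explicitly and read off its action on the real vectors $g_n$. First I would observe that $B = \id + iA$, viewed as a $\C$-linear operator on $\Hi + i\Hi$ (with $A$ extended $\C$-linearly from $\Hi$), has adjoint $B^* = \id - iA^*$, where $A^*$ denotes the real adjoint of $A$ extended in the same way. This is a short computation using the explicit form of the complex inner product from the introduction; the key point is that if $A$ is real, then so is its adjoint, and the $i$ in front of $A$ flips sign under conjugation.

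Next, the standard factorization
\[
(\id - iA^*)(\id + iA^*) = \id + (A^*)^2
\]
together with invertibility of $\id + (A^*)^2$, which follows from Lemma \ref{lem:complex-bijective-to-real-bojective} applied to $A$ (a rebricking operator, so $\id + A^2$ is invertible) and then taking adjoints, yields
\[
(B^*)^{-1} = (\id + iA^*)\bigl(\id + (A^*)^2\bigr)^{-1}.
\]
Applying this to $g_n \in \Hi$, the vector $u_n := (\id + (A^*)^2)^{-1} g_n$ lies in $\Hi$ (it is the image of a real vector under a real operator), and therefore
\[
(B^*)^{-1} g_n = u_n + i A^* u_n,
\]
so its real part equals $(\id + (A^*)^2)^{-1} g_n$.

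With this identification, condition (i) becomes $(\id + (A^*)^2)^{-1} g_n = \lambda g_n$ for every $n \in \N$. Since $\{g_n : n \in \N\}$ is a Riesz basis of $\Hi$ and both sides are bounded linear functions of the input, this is equivalent to the operator identity $(\id + (A^*)^2)^{-1} = \lambda\,\id$ on $\Hi$, i.e., $(A^*)^2 = \frac{1-\lambda}{\lambda}\id$. Taking adjoints gives $A^2 = \frac{1-\lambda}{\lambda}\id$, and using invertibility of $A$ this rearranges to $A^{-1} = \frac{\lambda}{1-\lambda}A$, which is (ii). Each step is reversible, so the converse direction runs the same chain backwards.

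The main technical obstacle is the correct identification of $B^*$ on the complexified Hilbert space and tracking real vs.\ imaginary parts under the inversion formula; once $B^* = \id - iA^*$ is in hand, the rest is linear algebra combined with completeness of $\{g_n\}$. The exclusions $\lambda \notin \{0,1\}$ are exactly what is needed for the final rearrangement: $\lambda = 0$ would make $(\id + (A^*)^2)^{-1}$ annihilate a complete system, contradicting its invertibility, while $\lambda = 1$ would force $(A^*)^2 = 0$, contradicting the invertibility of $A$.
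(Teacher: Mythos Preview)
Your argument is correct, but the route differs from the paper's. Rather than inverting $B^*$ explicitly, the paper writes $\tilde g_n := (B^*)^{-1}g_n = \lambda g_n + i u_n$ with an unknown imaginary part $u_n\in\Hi$, applies $B^* = \id - iA^*$ to both sides, and compares real and imaginary parts of the resulting equation $g_n = \lambda g_n + A^* u_n + i(u_n - \lambda A^* g_n)$. This yields the two conditions $(1-\lambda)(A^*)^{-1}g_n = u_n = \lambda A^* g_n$, from which $(A^*)^{-1} = \tfrac{\lambda}{1-\lambda}A^*$ and hence (ii). Your approach instead produces the closed formula $(B^*)^{-1} = (\id + iA^*)(\id + (A^*)^2)^{-1}$ and reads the real part off directly as $(\id + (A^*)^2)^{-1}g_n$; this is slightly more conceptual and gives the explicit inverse as a byproduct, at the cost of invoking Lemma~\ref{lem:complex-bijective-to-real-bojective} for the invertibility of $\id + (A^*)^2$. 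The paper's argument is marginally more self-contained since it never needs that invertibility explicitly. Both arguments are equally reversible for the converse.
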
 

\begin{proof}
Denote $\tilde{g}_n = (B^*)^{-1} g_n$. We suppose (i). Then  $$ (B^*)^{-1}g_n = \Re\tilde{g}_n + i\Im\tilde{g}_n = \lambda g_n +i\underbrace{\Im\tilde{g}_n}_{\displaystyle :=u_n}.$$ Moreover,  \begin{align*}
	&g_n = B^* \tilde{g}_n =  B^*(\lambda g_n + iu_n) = (\id-iA^*)(\lambda g_n + iu_n) = \lambda g_n + iu_n - i\lambda A^*g_n + A^*u_n.\end{align*} This implies
	\begin{align*}(1-\lambda) g_n + i\lambda A^*g_n = A^*u_n + iu_n\end{align*} which is the same as
	\begin{align}(1-\lambda)\left(A^*\right)^{-1}g_n = u_n \quad \mbox{and} \quad \lambda A^*g_n = u_n.
	\label{eq u n im Riesz Basen Beweis}
	\end{align}
	Thus \[ (1-\lambda)\left(A^*\right)^{-1}g_n = \lambda A^*g_n.\]
	As $\Span \{g_n : n \in \N\}$ is dense in $\Hi$ this is equivalent to $ \frac{\lambda}{1-\lambda}A^* = \left(A^*\right)^{-1}$. Hence $\frac{\lambda}{1-\lambda}A = A^{-1}$, which is (ii).
	
	For the converse, apply the arguments in the reverse order. We define $u_{n}$ as in \eqref{eq u n im Riesz Basen Beweis}. Hereby note that $u_n = A^*g_n \in \Hi$.
\end{proof}
 
In addition to the dual Riesz basis, which is an exact frame, the frame bounds of the rebricked Riesz basis are of particular interest. To this end let $\{f_n := Ue_n : n \in \N\} \subset \Hi$ be a Riesz basis for some orthonormal basis $\{e_n : n \in \N\} \subset \Hi$ and an isomorphism $U \in \mathcal{L}(\Hi)$. Moreover let 
$$c:=\frac{1}{\norm{U^{-1}}^2}, \quad C := \norm{U}^2$$ be the lower and upper frame bound respectively. Let $\{\tilde f_n := Bf_n : n \in \N\} {\subset \Hi + i\Hi}$ be the rebricked Riesz basis for a linear operator $B$ as described in Theorem \ref{thm_lift_riesz_bases}.

By definition the rebricked Riesz basis' lower frame bound is $\displaystyle\tilde{c} = \frac{1}{\norm{(BU)^{-1}}^2}$ and the upper frame bound is $\tilde{C} = \norm{BU}^2$. Unfortunately $U$ is not necessarily known for a Riesz basis, however the exact frame bounds or at least estimates are typically known. Given the bounds we can estimate the rebricked Riesz basis bounds by two simple calculations:
\[\tilde{c} = \frac{1}{\norm{(BU)^{-1}}^2} = \frac{1}{\norm{U^{-1}B^{-1}}^2} \geq \frac{1}{\norm{U^{-1}}^2\norm{B^{-1}}^2} = c\,\norm{B^{-1}}^{-2} \] for the lower bound and \[\tilde{C} = \norm{BU}^2 \leq \norm{B}^2 \norm{U}^2 = \norm{B}^2C.\] for the upper bound.
The operator norm of $B$ can be directly computed from the operator norm of $A$ by \[\norm{B} = \sup_{\norm{f} = 1} \sqrt{\norm{f}^2 + \norm{Af}^2} = \sqrt{1+\norm{A}^2}.\]

\subsection{Orthonormal Bases}
\label{ssec Orthonormal Bases}

Let $\Hi$ be a separable real Hilbert space and let $\{e_n:\ n\in\N\}$ be an orthonormal basis for $\Hi$. Our goal is again to answer the question whether two orthonormal bases can be used to rebrick a complex orthonormal basis of the related complex Hilbert space $\Hi+i \Hi$. To this end, we observe that two orthonormal bases are equivalent up to some \emph{unitary} operator $A:\Hi\to\Hi$. Therefore we choose the analogous ansatz
\[\cb{\tilde{e}_n:= Be_n := \frac 1{\sqrt{2}} (e_n+i Ae_n) : n\in\N}\] and ask for conditions on $B$.

\begin{lemma}\label{L e n ortho basis in H und H iH}
	$\{e_n:\ n\in\N\} \subset \Hi$ is an orthonormal basis for $\Hi$ if and only if $\{e_n:\ n\in\N\} \subset \Hi$ is an orthonormal basis for $\Hi + i\Hi$.
\end{lemma}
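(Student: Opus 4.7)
My plan is to verify the two defining properties of an orthonormal basis, namely orthonormality and totality, by exploiting the explicit formula for the complex inner product on $\Hi + i\Hi$ given earlier in the paper, and noting that both arguments being real collapses everything to the real inner product on $\Hi$.

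For the ``only if'' direction, I would first check orthonormality: for $e_n, e_m \in \Hi$ the complex inner product formula with vanishing imaginary parts reduces to
\[\langle e_n, e_m\rangle = \langle e_n, e_m\rangle_\Hi = \delta_{nm},\]
so the system is orthonormal in $\Hi + i\Hi$. For totality, I would take an arbitrary $f = f_r + i f_i \in \Hi + i\Hi$ and expand its real and imaginary parts separately in the real ONB:
\[ f_r = \sum_{n\in\N} \langle f_r, e_n\rangle_\Hi \, e_n, \qquad f_i = \sum_{n\in\N} \langle f_i, e_n\rangle_\Hi \, e_n.\]
Combining these and using that $\langle f, e_n\rangle = \langle f_r, e_n\rangle_\Hi + i\langle f_i, e_n\rangle_\Hi$ (again by the inner product formula with $e_n$ real), one obtains $f = \sum_n \langle f, e_n\rangle e_n$, which establishes the expansion in $\Hi + i\Hi$.

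The converse is essentially tautological: if $\{e_n : n\in\N\}$ is an orthonormal basis of $\Hi + i\Hi$, orthonormality restricts verbatim to $\Hi$ (by the same collapse of the inner product), and for $f \in \Hi \subset \Hi + i\Hi$ the coefficients $\langle f, e_n\rangle$ are already real, so the expansion $f = \sum_n \langle f, e_n\rangle e_n$ converges in $\Hi$. The only step requiring mild care is the convergence in the correct norm, but since $\|\cdot\|^2 = \|\cdot\|_\Hi^2 + \|\cdot\|_\Hi^2$ by the Pythagorean identity already recorded in the introduction, convergence in $\Hi + i\Hi$ and componentwise convergence in $\Hi$ are equivalent, so this is immediate and constitutes the only place where one must invoke more than a direct symbolic reduction.
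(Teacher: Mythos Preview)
Your proof is correct and takes essentially the same approach as the paper: the paper's proof is the one-line remark that ``orthonormality and completeness of the real basis are maintained both for the complex as well as for the restriction on the real part,'' and your argument simply spells this out explicitly via the inner product formula and the Pythagorean identity. The only cosmetic issue is the notation $\|\cdot\|^2 = \|\cdot\|_\Hi^2 + \|\cdot\|_\Hi^2$, which should read $\|f\|^2 = \|f_r\|_\Hi^2 + \|f_i\|_\Hi^2$; otherwise everything is fine.
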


\begin{proof}
	Orthonormality and completeness of the real basis are maintained both for the complex as well as for the restriction on the real part.
\end{proof}

\begin{theorem}
\label{S A unitaer und selbstadjungiert fuer ONB}
Suppose $\{e_n:\ n\in\N\} \subset \Hi$ is an orthonormal basis for $\Hi$.
For $A \in \mathcal{L}(\Hi)$
	 the following statements are equivalent:
	\begin{enumerate}[label=(\roman*)]
		\item The system \[\cb{\tilde e_n:= Be_n := \frac 1{\sqrt{2}} ( e_n+iA e_n):\ n\in\N}\] is an orthonormal basis in $\Hi+i \Hi$.
		\item $A:\Hi\to\Hi$ is self-adjoint and unitary. 
	\end{enumerate}
	In this case it is $A = A^* = A^{-1}$.
\end{theorem}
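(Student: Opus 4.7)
The plan is to recast the condition on the family $\{\tilde e_n\}$ as a condition on the operator $B := \frac{1}{\sqrt{2}}(\id + iA)$ viewed as a linear map on the complex Hilbert space $\Hi + i\Hi$. By Lemma \ref{L e n ortho basis in H und H iH}, $\{e_n : n\in\N\}$ is an orthonormal basis of $\Hi + i\Hi$. Since $\tilde e_n = Be_n$, the family $\{\tilde e_n\}$ is an orthonormal basis of $\Hi+i\Hi$ if and only if $B$ is a unitary operator on $\Hi + i\Hi$. This reduces the problem to characterising when $B$ is unitary in terms of the real operator $A$.

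Next I would work out the complex adjoint of $B$. The operator $A \in \mathcal{L}(\Hi)$ extends $\C$-linearly to $\Hi + i\Hi$ via $A(f_r + if_i) = Af_r + iAf_i$, and under this extension the complex adjoint of $A$ coincides with the extension of the real adjoint $A^*$. Consequently $B^* = \frac{1}{\sqrt{2}}(\id - iA^*)$, and a direct computation yields
\[
B^*B = \tfrac{1}{2}\bigl(\id + A^*A + i(A - A^*)\bigr), \qquad BB^* = \tfrac{1}{2}\bigl(\id + AA^* + i(A - A^*)\bigr).
\]
Since $\id + A^*A$ and $\id + AA^*$ are self-adjoint real operators while $i(A-A^*)$ is $\C$-linear with purely imaginary ``coefficient'', the real and imaginary parts can be separated.

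For the implication (i) $\Rightarrow$ (ii): assuming $B$ is unitary, the identity $B^*B = \id$ forces both $A - A^* = 0$ (so $A$ is self-adjoint) and $A^*A = \id$ (so $A$ is an isometry); combined with $BB^* = \id$, which analogously yields $AA^* = \id$, one concludes that $A$ is self-adjoint and unitary with $A = A^* = A^{-1}$. For the converse (ii) $\Rightarrow$ (i), if $A = A^* = A^{-1}$ then $A - A^* = 0$ and $A^*A = AA^* = \id$, so the formulas above immediately give $B^*B = BB^* = \id$, i.e.\ $B$ is unitary, and therefore $\{\tilde e_n\}$ is an orthonormal basis of $\Hi + i\Hi$.

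The only subtle point, and the step where one has to be careful, is the passage from real to complex adjoint: one must justify that extending $A$ $\C$-linearly to $\Hi + i\Hi$ preserves the adjoint, so that $B^*$ really takes the form $\frac{1}{\sqrt{2}}(\id - iA^*)$ with the same real adjoint $A^*$. Once this is established, the separation of self-adjoint and skew-adjoint parts in $B^*B = \id$ is routine and the equivalence follows cleanly.
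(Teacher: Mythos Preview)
Your proposal is correct and follows essentially the same route as the paper: reduce to $B$ being unitary on $\Hi+i\Hi$, compute $B^*B$ and $BB^*$, and separate the real operator $\id+A^*A$ (resp.\ $\id+AA^*$) from the term $i(A-A^*)$ to read off $A=A^*$ and $A^*A=AA^*=\id$. Your explicit remark about the complex adjoint of the $\C$-linear extension of $A$ coinciding with the extension of the real adjoint is a point the paper leaves implicit, so your version is slightly more careful there.
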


\begin{proof}
	$\cb{\tilde e_n :\ n\in\N}$ is an orthonormal basis if and only if $B$ is unitary on $\Hi+i\Hi$. To this end consider the equality 
	\begin{align*}BB^* &= \frac{1}{2} (\id+i A) (\id-i A^*)= \frac{1}{2} (\id + AA^*) +\frac{i}{2} (A-A^*) \overset{!}{=} \id\end{align*} and similarly \[B^*B = \frac{1}{2}\left(\id +A^*A\right)+\frac{i}{2}\left(A^*-A\right) \overset{!}{=}\id.\]

	(i) $\Rightarrow$ (ii): From the two equations above we see that then $A=A^{\ast}$ and $AA^{\ast} = \id = A^{\ast}A$. Hence $A$ is self-adjoint and unitary.

	(ii) $\Rightarrow$ (i): Conversely if $A$ is unitary and self-adjoint, then from the same two equations above we see that the operator $B$ is unitary. 

	If $A$ is self-adjoint and unitary $AA^{-1} = \id = A^*A = A^2$, i.e. $A = A^{-1}$.
\end{proof}

Finally we return to the initial question whether two real orthonormal bases can be used for rebricking a complex orthonormal basis.
\begin{proposition}
    Suppose $\{e_n:\ n\in\N\}, \{{d}_n:\ n\in\N\} \subset \Hi$ are orthonormal bases. Then there are equivalent:
    \begin{enumerate}[label=(\roman*)]
        \item The system \[\cb{\tilde{e}_n := \frac 1{\sqrt{2}} ( e_n+i {d}_n):\ n\in\N}\] is an orthonormal basis in $\Hi+i \Hi$.
        \item For all $n,k \in \N$ holds the symmetry condition \[\langle {d}_n, e_k\rangle = \langle {d}_k, e_n \rangle.\]
    \end{enumerate}
\end{proposition}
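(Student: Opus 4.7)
The plan is to reduce the proposition to Theorem \ref{S A unitaer und selbstadjungiert fuer ONB} by introducing the unique bounded linear operator $A:\Hi\to\Hi$ that maps the first orthonormal basis onto the second, so that the equivalence of (i) and (ii) becomes a matrix-coefficient reformulation of the property that $A$ is self-adjoint.

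First I would define $A$ by setting $Ae_n := d_n$ for all $n\in\N$ and extending by linearity and continuity. Because $\{e_n: n\in\N\}$ and $\{d_n: n\in\N\}$ are both orthonormal bases of $\Hi$, the operator $A$ is well-defined, bounded, and sends an orthonormal basis to an orthonormal basis; hence $A$ is unitary. By Theorem \ref{S A unitaer und selbstadjungiert fuer ONB}, the rebricked family $\{\tilde e_n = \tfrac{1}{\sqrt 2}(e_n + iAe_n) : n\in\N\} = \{\tfrac{1}{\sqrt 2}(e_n + id_n) : n\in\N\}$ is an orthonormal basis of $\Hi + i\Hi$ if and only if $A$ is self-adjoint and unitary. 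Since unitarity is already guaranteed by the construction, the condition collapses to $A = A^*$.

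Next I would translate self-adjointness into the claimed symmetry condition via the basis $\{e_n: n\in\N\}$. Self-adjointness of $A$ is equivalent to the equality of matrix entries $\langle Ae_n, e_k\rangle_{\Hi} = \langle e_n, Ae_k\rangle_{\Hi}$ for all $n,k\in\N$, because $\{e_n : n\in\N\}$ is an orthonormal basis of the real Hilbert space $\Hi$ and $A$ is real-linear. Substituting $Ae_n = d_n$ and $Ae_k = d_k$ and using that the real inner product is symmetric, this becomes
\[
\langle d_n, e_k\rangle_{\Hi} = \langle e_n, d_k\rangle_{\Hi} = \langle d_k, e_n\rangle_{\Hi},
\]
which is exactly (ii). Chaining the two equivalences yields (i) $\Leftrightarrow$ (ii).

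There is no real obstacle here since the heavy lifting was done in Theorem \ref{S A unitaer und selbstadjungiert fuer ONB}; the only thing to be slightly careful about is to invoke the reality of $\Hi$ and of $A$ when passing between $\langle e_n, d_k\rangle_{\Hi}$ and $\langle d_k, e_n\rangle_{\Hi}$, and to justify that checking self-adjointness on the basis $\{e_n: n\in\N\}$ is enough, which follows from density of $\Span\{e_n: n\in\N\}$ in $\Hi$ and continuity of $A$ and $A^*$.
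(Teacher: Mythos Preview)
Your argument is correct and follows the same overall reduction as the paper: define $A$ by $Ae_n=d_n$, observe that $A$ is automatically unitary, and invoke Theorem~\ref{S A unitaer und selbstadjungiert fuer ONB} so that the question becomes whether $A$ is self-adjoint.

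The only difference is in how self-adjointness is verified. You check it directly via matrix symmetry in the basis $\{e_n\}$, i.e.\ $\langle Ae_n,e_k\rangle=\langle e_n,Ae_k\rangle$, which immediately gives $\langle d_n,e_k\rangle=\langle d_k,e_n\rangle$ after using the symmetry of the real inner product. The paper instead uses that a unitary operator is self-adjoint iff $A^2=\id$, computes $A^2e_n=\sum_k\langle d_n,e_k\rangle d_k$, and compares this with the expansion $e_n=\sum_k\langle d_k,e_n\rangle d_k$ in the basis $\{d_k\}$. Your route is a bit shorter and avoids the detour through the involution characterization; the paper's route has the mild conceptual advantage of making explicit that the rebricking condition on orthonormal bases is precisely $A^2=\id$.
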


\begin{proof} We start with some preliminary thoughts:
    \[\cb{\tilde{e}_n := \frac 1{\sqrt{2}} ( e_n+i {d}_n):\ n\in\N}\] is an orthonormal basis in $\Hi+i \Hi$ if and only if the operator $A: \Hi \to \Hi$,   $Ae_n := {d}_n$ is a unitary, self-adjoint operator. In our case, $A$ is unitary because it maps an orthonormal basis onto an orthonormal basis. The unitary operator $A$ is self-adjoint if and only if $A^2 = \id$, which is in turn equivalent to $A^2 e_n = e_n$ for all $n\in\N$. By inserting the definition of $A$ we get
    \[A^2e_n = A\,{d}_n = A \sum_{k \in \N} \langle {d}_n, e_k\rangle e_k=  \sum_{k \in \N}\langle {d}_n, e_k\rangle Ae_k = \sum_{k \in \N}\langle {d}_n, e_k\rangle {d}_k\] for all $n \in \N$.
Because $\{{d}_n:\ n\in\N\}$ is a real orthonormal basis it automatically holds
    \[e_n = \sum_{k \in \N} \langle e_n, {d}_k\rangle {d}_k = \sum_{k \in \N} \langle {d}_k, e_n \rangle {d}_k\] for all $n \in \N$.

(i) $\Rightarrow$ (ii): Suppose (i). Then it is \[\sum_{k \in \N}\langle {d}_n, e_k\rangle {d}_k = A {d}_n = A^2 e_n = e_n =  \sum_{k \in \N} \langle {d}_k, e_n \rangle {d}_k\] for all $n \in \N$. I.e. for all $n,k \in \N$ we get $\langle {d}_n, e_k\rangle = \langle {d}_k, e_n \rangle$.

    (ii) $\Rightarrow$ (i):
    Suppose (ii), $\langle {d}_n, e_k\rangle = \langle {d}_k, e_n \rangle$ for all $n,k \in \N$.
    This yields \[A^2e_n = \sum_{k \in \N}\langle {d}_n, e_k\rangle {d}_k = \sum_{k \in \N}\langle {d}_k, e_n \rangle {d}_k= \sum_{k \in \N}\langle e_n, {d}_k \rangle {d}_k = e_n.\] 
    Thus $A^2=\id$. According to the preliminary thoughts this is equivalent to (i).
\end{proof}

\begin{remark}[Comparison to the Riesz basis case]
	Notice that---in contrast to the Riesz basis case---here $A$ being self-adjoint is not sufficient but only necessary.
	For self-adjoint $A$, the spectrum is always real, hence $i$ is never an element.
	
	Orthonormal bases have the useful property that the dual Riesz basis is the orthonormal basis itself. Thus the real part of the dual orthonormal basis equals the initial real part up to a factor of $\frac{1}{\sqrt{2}}$. 
	
	Here are two examples that illustrate that the conditions self-adjoint and unitary can not be relaxed without losing the orthonormal basis property:
	\begin{itemize}
		\item[a)] Suppose $A$ is self-adjoint and invertible but not unitary. Then $\{Ae_n : n \in \N\}$ is a Riesz basis and $\{\frac{1}{\sqrt{2}}(e_n +iAe_n) : n \in \N\}$ is a complex Riesz basis whose real part is an orthogonal basis. 
		\item[b)] Suppose $A$ is unitary without spectral value $i$ but not self-adjoint. Then $\{Ae_n : n \in \N\}$ is an orthonormal basis and $\{\frac{1}{\sqrt{2}}(e_n +iAe_n) : n \in \N\}$ is a complex Riesz basis whose real and imaginary parts is are orthogonal bases.
	\end{itemize}
\end{remark}

%
%

\begin{example}[Rebricked orthonormal bases of translates in $\ltwoC$]
\label{ex Rebricked orthonormal bases of translates in ltwo}
As first example for the rebricking of orthonormal bases we now examine bases of translates in $\ltwoR$:
\[\{T^nx :\ n \in \Z\},\] where $x \in \ltwoR$, $ n \in \Z$, and 
\[T : \ltwoR \to \ltwoR,\quad (x_{m})_{m\in\Z} \mapsto (x_{m-1})_{m\in\Z}\]
is the right shift operator.

(Note that  for the space $\CLtwo{}$ no such basis exists \cite[Theorem 9.2.1]{Christensen2003}. This is why enhanced approaches of complete systems generated by translates are needed in this case, e.g. wavelets or Gabor frames.)

To rebrick the orthonormal basis $\{T^nx:\ n \in \Z\}$ of $\ltwoR$ we require in accordance with Theorem \ref{S A unitaer und selbstadjungiert fuer ONB} that $A: \ltwoR \to \ltwoR$ is unitary and self-adjoint. Furthermore we require $AT = TA$ as an additional property. This ensures that $\{AT^n x = T^nAx:\ n \in \Z\}$ is also an orthonormal basis of shifts and so is the rebricked basis $$\Big\{\frac{1}{\sqrt{2}}\left(\id + iA\right)T^n x = T^n\frac{1}{\sqrt{2}}\left(Id + iA\right)x:\ n \in \Z\Big\}.$$

It is well-known that bounded linear shift-invariant operators have a convolution structure. Denote $\{e_{n} : n\in\Z\}$ the canonical basis of $\ltwoR$ (and of $\ltwoC$),  and  $a = Ae_{0} \in \ltwoR$. Then for all $x \in \ltwoR$ when applying $TA = AT$,
\begin{eqnarray*}
Ax & = & A(x \ast e_{0})  \quad = \quad A\left(\sum_{n=-\infty}^{\infty} x_{n} e_{0}(\bullet -n)\right)
\quad = \quad A\left(\sum_{n=-\infty}^{\infty} x_{n} T^{n}e_{0}\right) \\
& = & \sum_{n=-\infty}^{\infty} x_{n} AT^{n}e_{0} \quad = \quad  \sum_{n=-\infty}^{\infty} x_{n} T^{n}Ae_{0} \quad = \quad x \ast a.
\end{eqnarray*}
We can rewrite this with the Fourier transform and the convolution theorem as

\[Ax = \left( m \cdot \check{x}\right)\hat{\ }\]
where $m = \check{a} \in L^2(\mathbb{T};\C)$ is the corresponding multiplier. 
\begin{enumerate}[label=(\roman*)]
\item As $A$ is self-adjoint, $m$ is real-valued, because for all real sequences $x,y$,
$$
 \int_{\mathbb{T}}m(t) \,\check{x}(t) \,\overline{\check{y}(t)}\, dt =\langle Ax, y\rangle =  \langle x, Ay\rangle 
 =\int_{\mathbb{T}}\check{x}(t)\, \overline{m(t)}\, \overline{\check{y}(t)}\, dt 
$$
as the Fourier transform is an isometrical isomorphism. 
\item As $A$ is unitary, $m$ has to preserve the norm:  
\begin{eqnarray*}
\norm{\check{x}}_{L^{2}}^{2}= \norm{x}_{\ell^2}^{2} = \langle A^{\ast}Ax,x\rangle = \langle Ax, Ax\rangle = \langle m \,\check{x}, m \,\check{x}\rangle_{L^{2}} = \norm{m\,\check{x}}_{L^{2}}
\end{eqnarray*}
for all $x \in \ltwoR$. Hence $\abs{m(t)}=1 $ for almost all $t \in \mathbb{T}$.
\item  The operator $A$ maps real valued sequences to real valued  sequences. Hence for real-valued $x$ we get from the properties of the Fourier transform
$\check{x}(-t) = (\overline{x})\check{\ }(t) = \overline{\check{x}(t)}$ and similarly $\check{Ax}(-t) =  \overline{\check{Ax}(t)}$. Hence for all $x\in \ltwoR$,
$$
m(-t)\,\overline{\check{x}(t)} = m(-t) \,\check{x}(-t) = (Ax)\check{\ } (-t) = \overline{(Ax)\check{\ } (t)} = \overline{m(t) \,\check{x}(t)} = m(t) \,\overline{\check{x}(t)},
$$
because by (i) $m$ is real-valued. Consequently, $m$ is symmetric: $m(t) = m(-t)$ for almost all $t \in\mathbb{T}$.
\end{enumerate}

We conclude that $m$ is even, real-valued, and $m(t) \in \{ -1,1\}$ for almost all $t\in\mathbb{T}$.
In summary the operator $B$ is of the form \[Bx = \frac{\id+iA}{\sqrt{2}}x = \left( \frac{1+i\,m}{\sqrt{2}} \check{x}\right)\hat{\ }.\] In this sense $B$ is diagonalized by the Fourier transform; see also for  
Example \ref{ex:finit_dim_geom_interpretation} to compare with the diagonalization in finite dimensional spaces.
\end{example}

\begin{example}[Rebricked trigonometric basis] 
Consider the space $L^2(\mathbb{T};\R)$ endowed with the inner product
$\langle f,g \rangle : = \int_{0}^{1} f(x) \, g(x) \, dx$ and  the classical real-valued trigonometric orthonormal basis 
$$B = \{1, \sqrt{2}\cos(2 \pi x),  \sqrt{2}\sin(2 \pi x), \sqrt{2}\cos(4 \pi x), \sqrt{2}\sin(4 \pi x), \ldots\}$$
consisting in the ordering of the constant function $1$ and then alternating $\cos$ and $\sin$-terms of the form $\sqrt{2} \cos (2 \pi k x)$, 
$\sqrt{2} \sin (2 \pi k x)$, $k\in\N$.
For 
$$
C = \{1, \sqrt{2}\sin(2 \pi x),  \sqrt{2}\cos(2 \pi x), \sqrt{2}\sin(4 \pi x), \sqrt{2}\cos(4 \pi x), \ldots\}
$$
we chose the same orthonormal basis, but with a different ordering:
We start with $1$ and then alternating $\sin$ and $\cos$-terms of the form $\sqrt{2} \sin (2 \pi k x)$, 
$\sqrt{2} \cos (2 \pi k x)$, $k\in\N$.
With those we perform the orthonormal rebricking approach $(b_{j} + i\, c_{j})/\sqrt{2}$, $j\in \N_{0}$, with $b_{j} \in B$ and $c_{j}\in C$ being the $j$th element in the basis, respectively. 

For $j=0$, $(b_{0} + i\, c_{0})/\sqrt{2} = \frac{1+i}{\sqrt{2}}$.

For $j\in \N$, $j$ odd, we represent $j= 2 k- 1$, $k \in \N$. Then $(b_{j} + i\, c_{j})/\sqrt{2} =  \cos (2 \pi k x) + i\sin (2 \pi k x) = \exp(2\pi i k x)$.

For $j\in \N$, $j$ even, we represent $j= 2 k$, $k \in \N$. Then $(b_{j} + i\, c_{j})/\sqrt{2} =  \sin (2 \pi k x) + i\cos (2 \pi k x) = i \exp(- 2\pi i k x)$. Hence with the rebricking approach, we retrieve the complex exponential basis of $L^2(\mathbb{T};\C)$, up to multiplicative phase factors.
We refer to \cite{bachman} for an introduction to Fourier series for real-valued functions and for shedding light on the historical developments.
\end{example}

\subsection{Rebricking in finite-dimensional vector spaces}

In the finite-dimensional case Riesz bases are ordinary vector space bases. The linear operator $A$ for rebricking bases can be expressed as a real matrix $A \in \R^{n\times n}$. Furthermore spectrum and eigenvalues coincide and we have that $\sigma(A^T) = \sigma(A)$. In the following, we represent the basis via the columns of an invertible matrix of the form 
\begin{equation}V = (v_{1} | v_{2} | \ldots | v_{n}),
\label{eq Basis in Matrix}
\end{equation}
where the columns $v_{k}$, $k=1,\ldots,n$, are the basis vectors. 

In this setting, we can deduce the following result from Theorem \ref{thm_lift_riesz_bases}:

\begin{corollary}[Finite-dimensional basis rebricking]
\label{cor Finite dimensional basis rebricking}
	Let $V_1,V_2 \in \gl(n,\R)$ whose columns form bases of $\R^n$. Then the following are equivalent:
	\begin{enumerate}[label=(\roman*)]
		\item $V_1 + iV_2 \in GL(n, \C)$, i.e., the columns form a basis for $\C^n$.
		\item $A := V_2V_1^{-1}$ has no eigenvalue $i$. 
		\item $A$ is a rebricking operator and it is $V_1 + iV_2 = V_1 + iAV_2$.
	\end{enumerate}
\end{corollary}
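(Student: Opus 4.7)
My plan is to reduce the claim to Theorem~\ref{thm_lift_riesz_bases} using the matrix factorisation
$$V_1 + iV_2 \;=\; (\id + iA)\,V_1,$$
which holds because $AV_1 = V_2 V_1^{-1} V_1 = V_2$. Since $V_1 \in \gl(n,\R) \subset \gl(n,\C)$ (its real determinant is nonzero, hence nonzero over $\C$), this factorisation shows that $V_1 + iV_2 \in \gl(n,\C)$ if and only if $\id + iA$ is invertible as a complex operator on $\C^n$.

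To get (i) $\Leftrightarrow$ (ii) I would then use the further factorisation $\id + iA = i(A - i\,\id)$, so that $\id + iA$ is invertible exactly when $i$ is not an eigenvalue of $A$. Note that $A = V_2 V_1^{-1}$ is automatically invertible as the product of two elements of $\gl(n,\R)$, so the isomorphism hypothesis required elsewhere is satisfied. For (ii) $\Leftrightarrow$ (iii) I would invoke Theorem~\ref{thm_lift_riesz_bases}: in finite dimensions the spectrum of $A$ coincides with its set of eigenvalues, so the condition $i \in \rho(A)$ appearing in that theorem translates verbatim into (ii). Since every basis of $\R^n$ is a Riesz basis, Theorem~\ref{thm_lift_riesz_bases} then asserts that $A$ rebricks every such basis, which is exactly Definition~\ref{D rebrickability Riesz bases}(ii). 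The identity recorded inside (iii) is just the defining relation $AV_1 = V_2$, read column-wise as the rebricking $v_k \mapsto v_k + iA v_k$ of the columns of $V_1$.

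The argument is essentially bookkeeping; the mathematical substance is already contained in Theorem~\ref{thm_lift_riesz_bases}. The only small points requiring care are the passage from real to complex invertibility of $V_1$ and the translation of ``rebricking operator'' from the abstract Hilbert space setting to the matrix setting, both of which are routine. I therefore expect no obstacle of real substance.
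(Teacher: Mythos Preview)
Your proposal is correct and follows exactly the route the paper indicates: the corollary is stated without its own proof, merely as a consequence of Theorem~\ref{thm_lift_riesz_bases}, and your factorisation $V_1+iV_2=(\id+iA)V_1$ together with the finite-dimensional identification of spectrum and eigenvalues is precisely the intended deduction. You have also correctly read the identity in~(iii) as $V_1+iV_2=V_1+iAV_1$ (the paper's printed $V_1+iAV_2$ is a typo), so there is nothing to add.
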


\begin{remark}
	\begin{enumerate}[label=(\roman*)]
		\item In contrast to the infinite-dimensional case it is possible to  verify algorithmically  whether two bases set in the columns of $V_1,V_2 \in \R^{n\times n}$ can be rebricked, e.g. via the Singular Value Decomposition SVD \cite{trefethen97}.
		\item From a probabilistic point of view we observe that given two random real bases in a finite-dimensional vector space, the probability that the rebricking yields a complex basis is $1$ because the probability of having an eigenvalue $i$ is zero.
	\end{enumerate}
\end{remark}

\begin{example}[An illustrating counter example]
	\label{ex:counter_example}
	In the finite-dimensional case we can give a geometric interpretation to the rebricking matrices not having the eigenvalue $i$.
To this end, consider the three regular matrices 
	\[V_1 = \begin{pmatrix}	1 & 0\\0 & 1
	\end{pmatrix}, \quad V_2 = \begin{pmatrix}
	\cos(\frac{\pi}{4}) & -\sin(\frac{\pi}{4})\\[0.8ex]
	\sin(\frac{\pi}{4}) & \cos(\frac{\pi}{4})
	\end{pmatrix} = \begin{pmatrix}\frac{1}{\sqrt{2}} & -\frac{1}{\sqrt{2}}\\[0.8ex] 
	\frac{1}{\sqrt{2}} & \frac{1}{\sqrt{2}}\end{pmatrix}, \quad V_3 = \begin{pmatrix}
	0 & -1\\1 & 0
	\end{pmatrix}.\]
	
	in $\gl(2,\R)$. Then 
	\begin{align*}A_{12} = V_2V_1^{-1} = \begin{pmatrix}
	\cos(\frac{\pi}{4}) & -\sin(\frac{\pi}{4})\\\sin(\frac{\pi}{4}) & \cos(\frac{\pi}{4})
	\end{pmatrix} = \begin{pmatrix}\frac{1}{\sqrt{2}} & -\frac{1}{\sqrt{2}}\\[0.8ex] 
	\frac{1}{\sqrt{2}} & \frac{1}{\sqrt{2}}\end{pmatrix},
	\end{align*}
	\begin{align*}
	A_{23} = V_3V_2^{-1} = \begin{pmatrix}
	\sin(\frac{\pi}{4}) & -\cos(\frac{\pi}{4})\\\cos(\frac{\pi}{4}) & \sin(\frac{\pi}{4})
	\end{pmatrix} = \begin{pmatrix}\frac{1}{\sqrt{2}} & -\frac{1}{\sqrt{2}}\\[0.8ex] 
	\frac{1}{\sqrt{2}} & \frac{1}{\sqrt{2}}\end{pmatrix}, \quad \mbox{and}
	\end{align*}
	\begin{align*}
	A_{13} = V_3V_1^{-1} = \begin{pmatrix}
	0 & -1\\1 & 0
	\end{pmatrix}.
	\end{align*}
	While the first two matrices have the eigenvalues $\frac{1\pm i}{\sqrt 2}$, the last has the eigenvalues $\pm i$, and indeed 
	
	\[V_1 + iV_3 = \begin{pmatrix}
	1 & -i\\
	i & 1
	\end{pmatrix} \] 
	is not regular in $GL(2,\C)$.
	
	The counterexample shows that rotations by $\frac{\pi}{2}$ on subspaces are the kind of geometrical actions that are not allowed for rebricking operators $A$.
\end{example}

Conversely one might wonder whether a complex basis can be split up into two real bases via their real and imaginary parts. Unfortunately complex bases need not even have an imaginary part for all basis vectors and the ones which have imaginary parts need not have linear independent ones. As a counter example  consider the regular matrix
\[\begin{pmatrix}1 + i & i & 0\\ 0 & 1 & 0\\ 0& 0& 1\end{pmatrix}.\] Not all column vectors have an imaginary part and those who have share all the same one. In fact, this example falls in the classes of bases which we can not construct by our rebricking approach, and clearly, most bases are of this non-bricked form.

In the finite-dimensional setting orthonormal bases form orthogonal matrices via the representation \eqref{eq Basis in Matrix}. The finite-dimensional equivalent of real unitary self-adjoint operators are orthogonal symmetric matrices. Therefore we can reformulate the general orthonormal basis rebricking Theorem \ref{S A unitaer und selbstadjungiert fuer ONB} as follows:
\begin{corollary}[Finite-dimensional orthonormal basis rebricking]
	Suppose that two orthonormal bases of $\R^n$ are represented by the columns of the orthogonal matrices $E_1,E_2 \in \R^{n\times n}$. Then the following are equivalent:
	\begin{enumerate}[label=(\roman*)]
		\item The columns of $\frac{1}{\sqrt{2}}(E_1 + iE_2)$ form an orthonormal basis for $\C^n$.
		\item $A := E_2E_1^{-1} = E_2E_1^{T}$ is symmetric. 
	\end{enumerate}
\end{corollary}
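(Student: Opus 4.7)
The plan is to derive this finite-dimensional statement as a direct specialization of Theorem \ref{S A unitaer und selbstadjungiert fuer ONB}. Write the columns of $E_1$ as $\{e^{(1)}_k : k=1,\ldots,n\}$ and those of $E_2$ as $\{e^{(2)}_k : k=1,\ldots,n\}$; both are orthonormal bases of $\R^n$. Since an operator between finite-dimensional spaces is determined by its action on a basis, define $A \in \mathcal{L}(\R^n)$ by $A e^{(1)}_k := e^{(2)}_k$. In matrix form this is exactly $A E_1 = E_2$, i.e.\ $A = E_2 E_1^{-1}$, and because $E_1$ is orthogonal we have $E_1^{-1} = E_1^T$, yielding $A = E_2 E_1^T$, which matches the expression in the statement.

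Next I apply Theorem \ref{S A unitaer und selbstadjungiert fuer ONB} to the orthonormal basis $\{e^{(1)}_k\}$ with this operator $A$. The theorem says that $\bigl\{\tfrac{1}{\sqrt{2}}(e^{(1)}_k + i A e^{(1)}_k) : k=1,\ldots,n\bigr\}$ is an orthonormal basis of $\C^n$ if and only if $A$ is self-adjoint and unitary. Substituting $A e^{(1)}_k = e^{(2)}_k$, the rebricked family is precisely $\bigl\{\tfrac{1}{\sqrt{2}}(e^{(1)}_k + i e^{(2)}_k)\bigr\}$, i.e.\ the columns of $\tfrac{1}{\sqrt{2}}(E_1 + i E_2)$. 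Hence (i) is equivalent to \emph{$A$ is both symmetric and orthogonal} in the matrix sense, since for real matrices ``self-adjoint'' means symmetric and ``unitary'' means orthogonal.

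Finally I would observe that orthogonality of $A$ is automatic: $A = E_2 E_1^T$ is a product of two orthogonal matrices, and the orthogonal group is closed under multiplication, so $A A^T = E_2 E_1^T E_1 E_2^T = E_2 E_2^T = I$. Consequently, symmetry of $A$ is the only nontrivial remaining condition, which gives the equivalence (i) $\Leftrightarrow$ (ii). I do not expect a real obstacle here; the content of the corollary is essentially bookkeeping plus the observation that the ``unitary'' half of the hypothesis in Theorem \ref{S A unitaer und selbstadjungiert fuer ONB} is automatically satisfied in the orthogonal-matrix setting, so the effective criterion collapses to symmetry alone.
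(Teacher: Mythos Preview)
Your proof is correct and follows essentially the same route as the paper: the corollary is stated there as a direct reformulation of Theorem~\ref{S A unitaer und selbstadjungiert fuer ONB}, with the observation that $A=E_2E_1^T$ is automatically orthogonal so that only symmetry remains. Your write-up spells out exactly these steps.
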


\begin{example}[Geometric interpretation]
	\label{ex:finit_dim_geom_interpretation}
	Now we investigate the geometric operation which an orthogonal, self-adjoint matrix $A \in \R^{n \times n}$  performs on orthonormal bases. Due to its symmetry $A$ has only real eigenvalues, and as $A$ is orthogonal they are $\pm 1$. By the spectral theorem of linear algebra there exists an orthogonal matrix  $R \in \R^{n\times n}$ whose columns are a basis of eigenvectors such that 
	\begin{equation}R^TAR = \begin{pmatrix}
	1 & & &&&\\
	& \ddots & &&&\\
	& & 1 &&&\\
	&&& -1 &&\\
	&&&& \ddots &\\
	&&&&& -1\\
	\end{pmatrix} =: D.
	\label{eq matrix D}
	\end{equation}
	
	Clearly the converse is also true: For every $R \in SO(n)$ and every matrix $D$ of the form \eqref{eq matrix D} the product $A = RDR^T$ yields an orthogonal symmetric matrix. Thus given an orthogonal matrix $E \in \R^{n\times n}$ every ``bricked'' orthogonal matrix $\Phi = \frac{ 1}{\sqrt{2}} (\id+i A) E$ is built using a rotation matrix $R$ and a diagonal reflection matrix $D$: \[\Phi = \frac{1}{\sqrt{2}} \left(\id + i RDR^T\right)E = \frac{1}{\sqrt{2}} R \left(\id + i D\right)R^TE.\]
	
	Interpretation: To get a rebricked basis, every column vector of $E$ has to be rotated by $R^T\in SO(n)$; then it is rebricked via a multiplication by $\frac{1 \pm i}{\sqrt{2}}$. This multiplication corresponds to a rotation with $\pm \frac{\pi}{4}$ in a two-dimensional subspace of $\R^{2n}\cong\C^{n}$. Finally the result is rotated back by $R$.
\end{example}

Until now we have always asked the question under which circumstances the rebricked system is a basis again. Now we study the question what can be done if two real bases do not generate a complex basis. We have already found that the set of matrices $A\in\R^{n\times n}$ satisfying that for every basis $\{v_1,\ldots,v_n\}$ of $\R^n$ the set $\{v_1+iAv_1,\ldots,v_n+iAv_n\}$ is a basis of $\C^n$ consists of all $A\in \gl(n,\R)$ with $i$ not being an eigenvalue of $A$, see Corollary \ref{cor Finite dimensional basis rebricking}.

Let us consider again the counter example \ref{ex:counter_example} with $V_1$, $V_3$, and $A = A_{13}$. Yet, by switching the order of the basis vectors in exactly one of the two matrices by a permutation matrix,  $P=\begin{pmatrix}
0 & 1 \\
1 & 0
\end{pmatrix}$, we get when e.g. permuting $V_{3}$: \[V_1 + iV_3P = V_1+iAV_1P = \begin{pmatrix}
1 -i& 0\\
0 & 1+i
\end{pmatrix}. \]
Hence the column vectors indeed form a basis of $\C^2$.

In more general terms: Suppose that $V \in \gl(n,\R)$ contains the basis in its columns and $A \in \gl(n,\R)$ is not a rebricking operator:  $V+iAV\notin \gl(n,\C)$. As $V$ is regular, this is equivalent to $\id+iA\notin \gl(n,\C)$.
As we intend to work on $V$ via permutations, we perform a change of basis with respect to $V$: $\tilde A=V^{-1}AV$.

Now we ask whether there exists a permutation matrix $P$ such that
$$ \id+i\tilde AP\in \gl(n,\C),$$ 
i.e., $\tilde AP$ is a rebricking operator?

Before we approach said permutation problem, we revisit a basic result about characteristic polynomials.

The characteristic polynomial of $\tilde A$
$$
\chi_{\tilde A}(\lambda) = \det(\lambda\id - \tilde A) = \lambda^{n} + c_{n-1}(\tilde A) \lambda^{n-1} + c_{n-2}(\tilde A) \lambda^{n-2} + \ldots + c_{0}(\tilde A)
$$
has the coefficients
\begin{equation}\label{coeff}
	c_k(\tilde A) = (-1)^{n-k} \sum_{\substack{I\subset [n] \\ |I|=k}} \det(\tilde A_I), \quad k\in\{0,\ldots,n-1\},
\end{equation}
where $[n]:=\cb{1,\ldots,n}$ and $\tilde A_I$ is the matrix $\tilde A$ where the rows and columns with indices in $I$ have been deleted. 

As we will permute the columns of $\tilde A$, we need to introduce a more general notation. 

\begin{defn}
	For $I,J\subset[n]$, $|I|=|J|=m< n$,  $\tilde A_{I,J}$ is the matrix that results from deleting  the rows with indices in $I$ and the columns with indices in $J$ of $\tilde A$. In the case $I = J$ we abbreviate $\tilde A_I=\tilde A_{I,I}$. In the case  $I=\{i\}$, $J=\{j\}$, we write for short $\tilde A_{i,j}$.
\end{defn}

\begin{defn}
Let $S_{n}$ denote the group of permutations of the $[n]$ and let $\pi\in S_n$.
We denote by $P_\pi\in\R^{n\times n}$ the permutation matrix which permutes the columns of a matrix $\tilde A\in\R^{n\times n}$ with respect to $\pi$ when multiplying $P_\pi$ from the right to form $\tilde AP_\pi$.
\end{defn}

\begin{example}
Suppose $\pi=(234)$. The corresponding permutation matrix in $\R^{4}$ is 
	$$P_\pi=\begin{pmatrix}
		1 & 0 & 0 & 0 \\
		0 & 0 & 1 & 0 \\
		0 & 0 & 0 & 1 \\
		0 & 1 & 0 & 0
	\end{pmatrix}.
	$$
For the matrix $\tilde A=(a_1 \mid a_2 \mid a_3 \mid a_4) \in \R^{4 \times 4}$, the permutation applied from the right yields
$\tilde AP_\pi=(a_1 \mid a_4 \mid a_2 \mid a_3)$. By deleting the third row and column of $\tilde AP_\pi$, we get
	$$(\tilde AP_\pi)_3 = \begin{pmatrix}
		a_{1,1} & a_{4,1} & a_{3,1}  \\
		a_{1,2} & a_{4,2} & a_{3,2} \\
		a_{1,4} & a_{4,4} & a_{3,4}
	\end{pmatrix} = \tilde A_{3,2}\cdot\begin{pmatrix}
		1 & 0 & 0 \\
		0 & 0 & 1 \\
		0 & 1 & 0
	\end{pmatrix}
	= \tilde A_{3,\pi^{-1}(3)}\cdot P_{(23)}.
	$$
Note that although $(\tilde AP_\pi)_3$ and $\tilde A_{3,\pi^{-1}(3)}$ inherited the same columns from $\tilde A$, they have different order.
\end{example}


The next theorem is in fact stronger than the result we were looking for. It states that for any square matrix there exist at most two complex eigenvalues which are invariant under all permutation.

\begin{theorem}\label{2eig}
	Let $\tilde A\in\R^{n\times n}$.  Suppose $\lambda_0\in\C$  is an eigenvalue of $\tilde AP_\pi$ for all $\pi\in S_n$. Then,
	$$\lambda_0\in\Big\{ 0, \frac 1n \sum_{\ell=1}^{n}\sum_{m=1}^n a_{\ell,m} \Big\}.$$
\end{theorem}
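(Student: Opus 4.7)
The plan is to sum the identities $\chi_{\tilde{A}P_\pi}(\lambda_0) = 0$ over all $\pi \in S_n$. Most coefficients of the resulting polynomial will vanish, leaving a very low-degree equation in $\lambda_0$ that can be solved explicitly.

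First I would use formula \eqref{coeff} to write the $\lambda^k$-coefficient of $\sum_{\pi\in S_n}\chi_{\tilde{A}P_\pi}$, for $k < n$, as $(-1)^{n-k}\sum_{|I|=k}\sum_{\pi}\det\bigl((\tilde{A}P_\pi)_I\bigr)$, while the leading $\lambda^n$-coefficient is simply $n!$. The heart of the argument is to evaluate $\sum_{\pi\in S_n}\det\bigl((\tilde{A}P_\pi)_I\bigr)$ for a fixed $I\subset[n]$ of size $k$. Since $(\tilde{A}P_\pi)_{j_1,j_2} = \tilde{A}_{j_1,\pi^{-1}(j_2)}$, the Leibniz expansion on $J := [n]\setminus I$ gives
\[
\det\bigl((\tilde{A}P_\pi)_I\bigr) = \sum_{\sigma\in S_J}\sgn(\sigma)\prod_{j\in J}\tilde{A}_{j,\pi^{-1}(\sigma(j))}.
\]
Substituting $\tau = \pi^{-1}$ and decomposing every $\tau\in S_n$ as an injection $\beta := \tau|_J : J\hookrightarrow[n]$ together with one of $k!$ extensions to a permutation of $I$, followed by the change of variables $\gamma := \beta\circ\sigma$ (which decouples $\sigma$ from the remaining product), I obtain
\[
\sum_{\pi\in S_n}\det\bigl((\tilde{A}P_\pi)_I\bigr) = k!\,\Bigl(\sum_{\sigma\in S_J}\sgn(\sigma)\Bigr)\sum_{\gamma:J\hookrightarrow[n]}\prod_{j\in J}\tilde{A}_{j,\gamma(j)}.
\]
For $|J| = n-k \geq 2$, the sign sum $\sum_{\sigma\in S_J}\sgn(\sigma)$ vanishes, so every coefficient of $\lambda_0^k$ with $k \leq n-2$ is zero.

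The coefficient of $\lambda_0^{n-1}$ equals $-\sum_\pi \operatorname{tr}(\tilde{A}P_\pi) = -\sum_\pi\sum_i \tilde{A}_{i,\pi^{-1}(i)}$. Since each pair of indices $(i,m)$ is realised by exactly $(n-1)!$ permutations $\pi$, this sum is $-(n-1)!\sum_{\ell,m} a_{\ell,m}$. Putting everything together, the condition $\sum_\pi \chi_{\tilde{A}P_\pi}(\lambda_0) = 0$ collapses to
\[
n!\,\lambda_0^n - (n-1)!\,\Bigl(\sum_{\ell,m} a_{\ell,m}\Bigr)\lambda_0^{n-1} = 0,
\]
which factors as $(n-1)!\,\lambda_0^{n-1}\bigl(n\lambda_0 - \sum_{\ell,m} a_{\ell,m}\bigr) = 0$ and yields exactly the two values in the statement.

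The main technical obstacle is the combinatorial identity behind the vanishing of the middle coefficients; everything rests on finding the right double change of variables ($\tau = \pi^{-1}$ and $\gamma = \beta\circ\sigma$) so that the sign sum $\sum_{\sigma\in S_J}\sgn(\sigma)$ can be cleanly extracted from the expression and annihilates the whole contribution whenever $|J|\geq 2$. The case $n=1$ is trivial since the unique permutation is the identity and the only eigenvalue $a_{11}$ already lies in $\{0, \frac{1}{1}\sum_{\ell,m} a_{\ell,m}\}$.
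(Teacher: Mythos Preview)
Your proof is correct and follows the same global strategy as the paper: average the characteristic polynomials $\chi_{\tilde A P_\pi}$ over all $\pi\in S_n$, show that all coefficients of $\lambda^k$ with $k\le n-2$ cancel, and read off the only two possible roots from the surviving degree-$n$ and degree-$(n-1)$ terms.

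The one genuine difference is in how the vanishing of the middle coefficients is obtained. The paper does not expand the minors by Leibniz; instead it writes $(\tilde A P_\pi)_I=\tilde A_{I,\pi^{-1}(I)}P_{\tilde\pi_I}$ for a column-reordering permutation $\tilde\pi_I\in S_{n-k}$, so that $\det\big((\tilde A P_\pi)_I\big)=\sgn(\tilde\pi_I)\det\big(\tilde A_{I,\pi^{-1}(I)}\big)$. It then fixes the pair of index sets $(I,J)$ with $J=\pi^{-1}(I)$ and observes that, among all $\pi$ realising this pair, exactly half give $\sgn(\tilde\pi_I)=+1$ and half give $-1$, so the contribution of every $(I,J)$ cancels. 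Your route---Leibniz expansion plus the substitution $\tau=\pi^{-1}$ and $\gamma=\beta\circ\sigma$ to factor out $\sum_{\sigma\in S_J}\sgn(\sigma)$---is a direct variant of the same sign-cancellation idea that avoids introducing the auxiliary permutation $\tilde\pi_I$ and the counting argument. Both arguments are short; yours is perhaps a little more self-contained, while the paper's makes the role of the column permutation on the submatrix more explicit. Either way, the degree-$(n-1)$ computation and the final conclusion are identical.
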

\begin{proof}
	If $\lambda_0$ is an eigenvalue of $\tilde AP_\pi$ for all $\pi\in S_n$, then it is also a root of the sum of all characteristic polynomials $\chi_{\tilde AP_\pi}(\lambda)$, $\pi\in S_n$, i.e., it is a root of the polynomial 
	$$\sum_{\pi\in S_n} \chi_{\tilde AP_\pi}(\lambda) = \sum_{k=0}^n\lambda^k \sum_{\pi\in S_n}  c_k(\tilde AP_\pi).$$
	Applying equation (\ref{coeff}) yields
	\begin{align}\label{charsum}
		\sum_{k=0}^n\lambda^k\sum_{\pi\in S_n} c_k(\tilde AP_\pi) =  \lambda^n \, n!+\sum_{k=0}^{n-1}\lambda^k(-1)^{n-k} \sum_{\pi\in S_n} \sum_{\substack{I\subset[n] \\ |I|=k}} \det\rb{\rb{\tilde AP_\pi}_I}.
	\end{align}
	Let us turn our attention to the term $\det\rb{\rb{\tilde AP_\pi}_I}$. Clearly,
	$$\abs{\det\rb{\rb{\tilde AP_\pi}_I}} = \abs{\det\rb{\tilde A_{I,\pi^{-1}(I)}}},$$
	since the two matrices $\rb{\tilde AP_\pi}_I$ and $\tilde A_{I,\pi^{-1}(I)}$ only differ by a permutation of their columns. For $|I|=n-1$ the two matrices are scalars, and therefore they are identical and equal to the matrix entry $a_{\ell,\pi^{-1}(\ell)}$, where $I=[n]\setminus\{\ell\}$. For $|I|\in\{0,\ldots,n-2\}$, let $\tilde\pi_I\in S_{n-|I|}$ be the permutation such that
	\begin{equation}\label{pitilde}
		\rb{\tilde AP_\pi}_I = \tilde A_{I,\pi^{-1}(I)} P_{\tilde\pi_I}.
	\end{equation}
Hence, 
	$$\det\rb{\rb{\tilde AP_\pi}_I} = \det\rb{\tilde A_{I,\pi^{-1}(I)}}\cdot\sgn(\tilde\pi_I).$$
	Now equation (\ref{charsum}) yields
	\begin{align}\label{charcoal}
		\sum_{k=0}^n\lambda^k\sum_{\pi\in S_n} c_k(\tilde AP_\pi) &= \lambda^nn! - \lambda^{n-1} \sum_{\pi\in S_n} \sum_{\ell=1}^n a_{\ell,\pi(\ell)} \nonumber \\
		&\quad+ \sum_{k=0}^{n-2}\lambda^k(-1)^{n-k} \sum_{\pi\in S_n} \sum_{\substack{I\subset[n] \\ |I|=k}} \sgn(\tilde\pi_I)\det\rb{\tilde AP_{I,\pi^{-1}(I)}} \nonumber \\
		&= \lambda^nn! - \lambda^{n-1}  \sum_{\ell,m=1}^n a_{\ell,m}\underbrace{\abs{\cb{\pi\in S_n\mid \pi(\ell)=m}}}_{=(n-1)!} \nonumber \\
		&\quad+ \sum_{k=0}^{n-2}\lambda^k(-1)^{n-k} \sum_{\pi\in S_n} \sum_{\substack{I\subset[n] \\ |I|=k}} \sgn(\tilde\pi_I)\det\rb{\tilde AP_{I,\pi^{-1}(I)}}.
	\end{align}
	We now take a closer look at the sum
	$$\sum_{\pi\in S_n} \sum_{\substack{I\subset[n] \\ |I|=k}} \sgn(\tilde\pi_I)\det\rb{\tilde AP_{I,\pi^{-1}(I)}}.$$
	Similar to the trick that we have applied to the sum for $k=n-1$, we replace the sum over all $\pi\in S_n$ by the sum over all $J\subset[n]$ with $|J|=k$ and multiply every summand with the number of permutation $\pi\in S_n$ satisfying $\pi^{-1}(I)=J$ and $\sgn(\tilde\pi_I)=\pm 1$; i.e.
	\begin{align*}
		&\ \sum_{\pi\in S_n} \sum_{\substack{I\subset[n] \\ |I|=k}} \sgn(\tilde\pi_I)\det\rb{\tilde AP_{I,\pi^{-1}(I)}} \\
		= & \sum_{\substack{I,J\subset[n] \\ |I|,|J|=k}} \det\rb{\tilde A_{I,J}}\cdot \bigg(\abs{\cb{\pi\in S_n\mid \pi^{-1}(I)=J\land\sgn(\tilde\pi_I)=1}} \\
		&\qquad\qquad\qquad\qquad - \abs{\cb{\pi\in S_n\mid \pi^{-1}(I)=J\land\sgn(\tilde\pi_I)=-1}}\bigg) =0.
	\end{align*}
	In the last step, we exploited that the two sets of permutations are identical in their size, as they only differ by the sign of the permutation $\tilde \pi_I$.
	Inserting this result into (\ref{charcoal}) yields
	$$\sum_{k=0}^n\lambda^k\sum_{\pi\in S_n} c_k(\tilde AP_\pi) = \lambda^nn! - \lambda^{n-1}(n-1)! \, \sum_{\ell=1}^n\sum_{m=1}^n a_{\ell,m}.$$
As this polynomial has only the two roots $\lambda_1=0$ and $\lambda_2=\frac 1n\sum_{\ell=1}^n\sum_{m=1}^n a_{\ell,m}$, they are the only candidates as eigenvalues of $\tilde AP_\pi$ for all $\pi\in S_n$.
\end{proof}

\begin{remark}
	Since a singular matrix is turned into another singular matrix by applying a permutation to its columns, every singular matrix is an example for a matrix with an eigenvalue $\lambda_{1} = 0$ that is invariant under permutation of the columns. An example that uses the other option for the invariant eigenvalue
	$$\lambda_2 = \frac 1n\sum_{\ell=1}^n\sum_{m=1}^n a_{\ell,m}$$
	is the identity matrix, since $\id\cdot P_\pi=P_\pi$. Moreover, every permutation matrix has $\lambda_{2}=1$ as an eigenvalue corresponding to the eigenvector
	$$v=\frac 1{\sqrt n}\begin{pmatrix}
		1\\\vdots\\1
	\end{pmatrix}.
	$$
\end{remark}

We conclude from Theorem \ref{2eig}:

\begin{corollary}\label{eigi}
	Let $\tilde A\in\R^{n\times n}$ such that $i$ is an eigenvalue of $\tilde A$. Then there exists a permutation $\pi\in S_n$ such that $\tilde AP_\pi$ does not have eigenvalue $i$.
\end{corollary}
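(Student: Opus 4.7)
My plan is to derive the corollary as an immediate contrapositive of Theorem \ref{2eig}. The key observation is that the two candidate eigenvalues identified in that theorem, namely $0$ and $\frac{1}{n}\sum_{\ell,m} a_{\ell,m}$, are both real numbers, since $\tilde A \in \R^{n\times n}$ has real entries. In particular, neither candidate can equal $i$.

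First I would assume toward a contradiction that for every $\pi \in S_n$, $i$ is an eigenvalue of $\tilde A P_\pi$. Then Theorem \ref{2eig} applies with $\lambda_0 = i$ and forces
\[
i \in \Big\{ 0,\ \frac{1}{n}\sum_{\ell=1}^n \sum_{m=1}^n a_{\ell,m}\Big\}.
\]
Since both elements of the right-hand set are real, this is impossible. Hence the assumption fails, and there exists some $\pi \in S_n$ such that $i$ is not an eigenvalue of $\tilde A P_\pi$. The hypothesis that $i$ is an eigenvalue of $\tilde A$ itself (i.e., of $\tilde A P_{\mathrm{id}}$) is needed only to make the statement non-vacuous; the argument does not actually use it.

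There is no real obstacle; the whole content has already been delivered by Theorem \ref{2eig}, and the corollary just instantiates $\lambda_0 = i$ and notes that $i \notin \R$. I would keep the write-up to a couple of sentences.
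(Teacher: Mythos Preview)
Your proof is correct and mirrors the paper's own argument: invoke Theorem~\ref{2eig}, observe that both candidate invariant eigenvalues are real, and conclude that $i$ cannot persist under all permutations. Your additional remark that the hypothesis ``$i$ is an eigenvalue of $\tilde A$'' is only there to make the statement non-vacuous is accurate and a nice clarification.
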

\begin{proof}
	The only two candidates for eigenvalues of $\tilde AP_\pi$ for all $\pi\in S_n$ are $\lambda_1=0$ and $\lambda_2=\frac 1n\sum_{\ell=1}^n\sum_{m=1}^n a_{\ell,m}$, both of which are real-valued.
\end{proof}

This Corollary answers our original question whether there exists a permutation $\pi\in S_n$ such that for given matrices $A,V\in\gl(n,\R)$, $V+iAVP_{\pi}\in\gl(n,\C)$, thus its columns forming a basis of $\C^n$. Corollary \ref{eigi} yields the existence of a permutation $\pi\in S_n$ such that for $\tilde A = V^{-1}AV$, $i$ is not an eigenvalue of $\tilde AP_\pi$. Hence, $\id+i\tilde A P_\pi\in\gl(n,\C)$ and
$$V(\id+i\tilde A P_\pi) = V + iAVP_\pi$$
is regular, as well.

We can sum up: In the case of finite-dimensional vector spaces, for all real regular matrices $\tilde A,V$ there always exists a permutation matrix $P_{\pi}$ such that $\tilde AP_\pi$ is a rebricking operator. Thus, the matrix $V +iAVP_{\pi}$ is a complex regular matrix. Its columns form a rebricked complex basis.

\section{Rebricking with Redundancy}

\subsection{Frames}
\label{sec:Frames}
Frames are powerful tools in signal and image processing which---contrary to bases---allow for redundancy. Thereby they enable a sparse signal representation and provide more robustness against noise \cite{Mallat2009}.

\begin{defn}[Frame]
	Let $X$ be a real or complex separable infinite-dimensional Hilbert space. A set $\cb{f_n: n \in\N}\subset X$ is called \emph{frame} if there exist two constants $0<c \leq C < \infty$ (the frame bounds) such that
	for all $f\in X$,
	\[c\|f\|^2 \le \sum_{n\in\N} \abs{\scp{f,f_n}}^2 \le C\|f\|^2.\]
\end{defn}

We transfer the notion of rebrickability and the rebricking operator of Definition \ref{D rebrickability Riesz bases} to frames.

\begin{defn}[Rebrickability]
\label{Def Rebrickability Frames}
 Suppose $\Hi$ is an infinite-dimensional separable real Hilbert space and $A \in \mathcal{L}(\Hi)$.
	\begin{enumerate}[label=(\roman*)]
		\item 
		Let $\{f_n:\ n \in \N\}$, $\{g_n:\ n \in \N\}$ be two frames in $\Hi$.
They are called \emph{rebrickable} if \[\{\tilde{f}_n := f_n + i g_n : n \in \N \}\] is a frame of $\Hi + i\Hi$. In this case, $\{\tilde{f}_n : n \in \N \}$ is called a complexified or rebricked frame.
		\item The operator $A$ is called \emph{rebricking operator (for frames)} if for every frame $\{f_n:\ n \in \N\}\subset \Hi$,
		\[\{\tilde{f}_n := Bf_n := f_n + i Af_n : n \in \N \}\] is a complexified or rebricked frame for $\Hi + i \Hi$.
	\end{enumerate}	
\end{defn}

\begin{remark}
	\begin{enumerate}[label=(\roman*)]
	\item Similar to the Riesz basis case, if $A$ is able to rebrick \emph{one} frame $\{f_n:\ n \in \N\}\subset \Hi$ via $\{f_n + i Af_n : n \in \N \}$, Theorem \ref{rebrickapp} will imply that it is a rebricking operator for \emph{all} real frames.
	\item In contrast to the Riesz basis case, the two approaches (i) and (ii) in Definition \ref{Def Rebrickability Frames} will turn out to be not equivalent. In fact the rebricking operator approach (ii) will be a special case of the more general frame rebricking approach (i).
	\item Note that for our frame rebricking approach we require infinite dimensional Hilbert spaces. The finite dimensional case must be treated in a different manner, as frames for the same finite dimensional Hilbert space may differ in their number of elements. There are ways to handle these issues such as adding zero-vectors to the frames with fewer elements, however these strategies are not covered by this article.
	\end{enumerate}
\end{remark}

As a general requirement for the remainder of this article we only consider infinite dimensional separable vector spaces $X$ and $\Hi$.

Before we start discussing rebricking real frames to complex ones we show that every real frame is a complex frame with the same bounds.
\begin{lemma}
	Suppose $\Hi$ is a real separable infinite dimensional Hilbert space.
	$\cb{f_n: n \in\N}\subset\Hi$ is a frame for $\Hi$ if and only if $\cb{f_n: n \in\N}\subset\Hi$ is a frame for $\Hi +i\Hi$. The frame bounds are the same.
\end{lemma}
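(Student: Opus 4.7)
The plan is to verify the two-sided frame inequality in $\Hi+i\Hi$ directly from the one in $\Hi$ by splitting a complex vector into its real and imaginary parts, using the inner product defined in the introduction.

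First I would fix $f=f_r+if_i\in\Hi+i\Hi$ with $f_r,f_i\in\Hi$ and compute $\langle f,f_n\rangle$ using the formula from the introduction with $g_r=f_n$, $g_i=0$. This yields $\langle f,f_n\rangle = \langle f_r,f_n\rangle_\Hi + i\langle f_i,f_n\rangle_\Hi$, where both summands are real. Consequently $|\langle f,f_n\rangle|^2 = |\langle f_r,f_n\rangle_\Hi|^2 + |\langle f_i,f_n\rangle_\Hi|^2$, and summing over $n$ gives
\[
\sum_{n\in\N} |\langle f,f_n\rangle|^2 = \sum_{n\in\N}|\langle f_r,f_n\rangle_\Hi|^2 + \sum_{n\in\N}|\langle f_i,f_n\rangle_\Hi|^2.
\]

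For the forward direction, I would apply the real frame inequality with bounds $c,C$ to $f_r$ and to $f_i$ separately, add the two resulting two-sided inequalities, and use the Pythagorean identity $\|f\|^2=\|f_r\|_\Hi^2+\|f_i\|_\Hi^2$ (already established in the introduction) to obtain $c\|f\|^2\le \sum_n|\langle f,f_n\rangle|^2 \le C\|f\|^2$ on all of $\Hi+i\Hi$. For the converse, every $f\in\Hi$ is a special element $f=f+i\cdot 0\in\Hi+i\Hi$ with $\|f\|^2=\|f\|_\Hi^2$, and because the $f_n$ are real the inner product $\langle f,f_n\rangle$ agrees with $\langle f,f_n\rangle_\Hi$; restricting the complex frame inequality to such elements reproduces the real frame inequality with the same constants.

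There is no real obstacle here; the content is already encoded in the introduction (the definition of the inner product on $\Hi+i\Hi$ and the Pythagorean identity), and the argument is a short direct calculation. The only thing to be careful about is to note that the bounds transfer with equality, i.e., the sharp frame bounds $c,C$ are the same in $\Hi$ and in $\Hi+i\Hi$; this follows because the inequalities obtained by adding the real and imaginary contributions are saturated as soon as the real ones are (choose $f_i=0$ or $f_r=0$).
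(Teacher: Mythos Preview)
Your proof is correct and follows essentially the same route as the paper: split a complex vector into real and imaginary parts, use that $|\langle f,f_n\rangle|^2=|\langle f_r,f_n\rangle_\Hi|^2+|\langle f_i,f_n\rangle_\Hi|^2$ (this is what the paper calls the Pythagorean Theorem), apply the real frame bounds to each part, and add; the converse is declared obvious in the paper. Your version is in fact a bit more explicit about why the modulus splits and about why the sharp bounds coincide, but the argument is the same.
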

\begin{proof}
	Suppose the real frame $\cb{f_n: n \in\N}\subset\Hi$ possesses the frame bounds $c,C > 0$.  Let $g = g_r +ig_i \in \Hi + i\Hi$ be an arbitrary element with $g_r,g_i \in \Hi$ denoting real and imaginary part. Then,
	\begin{align*}
	c\norm{g}_2^2 = c \norm{g_r}_2 ^2 + c \norm{g_i}_2^2 \leq \sum_{n\in\N} \abs{\scp{g_r,f_n}}^2 + \sum_{n\in\N} \abs{\scp{g_i,f_n}}^2 \\
	= \sum_{n\in\N} \Big(\abs{\scp{g_r,f_n}}^2 + \abs{\scp{g_i,f_n}}^2\Big)
	= \sum_{n\in\N} \abs{\scp{g_r + ig_i,f_n}}^2 = \sum_{n\in\N} \abs{\scp{g,f_n}}^2,
	\end{align*}
where we have used Pythagorean Theorem twice.
	By applying similar estimates in reversed order we get
	\begin{align*}
	\sum_{n\in\N} \abs{\scp{g,f_n}}^2 = \sum_{n\in\N} \abs{\scp{g_r,f_n}}^2 + \abs{\scp{g_i,f_n}}^2 \leq C \norm{g_r}_2 ^2 + C \norm{g_i}_2^2 = C\norm{g}_2^2.
	\end{align*}
	
	The converse is obvious.	
\end{proof}

The previous section has dealt with the class of Riesz bases and the special subclass of orthonormal bases. Both classes share the important property that all elements are equivalent up to an automorphism (a unitary automorphism in case of orthonormal bases). Furthermore every Riesz basis can be generated by applying an automorphism on an arbitrary orthonormal basis. Frames do not have these two properties.  An analogous result for frames that characterizes them in terms of operators and orthonormal bases is the following, cf. \cite[Theorem 5.5.4]{Christensen2003}.
\begin{theorem}
	\label{thm:frame_characterization}
	Let $\{e_n :\,n \in \N\} \subset X$ be any arbitrary orthonormal basis for $X$. The frames for $X$ are precisely the families $\{U e_n: \,n \in \N\}$, where $U: X\to X$ is a bounded and surjective linear operator.
\end{theorem}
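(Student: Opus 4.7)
My plan is to prove both implications separately, exploiting the dual relationship between surjective operators and operators bounded below.

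For the direction ``$\Leftarrow$'', suppose $U \in \mathcal{L}(X)$ is bounded and surjective, and set $f_n := Ue_n$. Moving the operator to the adjoint and applying Parseval's identity for the orthonormal basis $\{e_n\}$ yields
\[\sum_{n \in \N} \abs{\langle f, Ue_n \rangle}^2 = \sum_{n \in \N} \abs{\langle U^*f, e_n\rangle}^2 = \norm{U^*f}^2.\]
The upper frame bound $\norm{U}^2 = \norm{U^{*}}^2$ is then immediate. For the lower frame bound, surjectivity of $U$ together with the open mapping theorem implies $U^*$ is bounded below, i.e.\ there exists $c > 0$ with $\norm{U^*f} \geq c\norm{f}$ for every $f \in X$; substituting into the displayed identity gives the lower inequality.

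For the direction ``$\Rightarrow$'', let $\{f_n : n\in\N\}$ be a frame with bounds $c, C$, and define $U : X \to X$ by
\[Uf := \sum_{n\in\N} \langle f, e_n\rangle f_n,\]
so that in particular $Ue_n = f_n$. I would verify boundedness by factoring $U = T \circ V$, where $V : X \to \ell^2(\N)$ is the unitary $f \mapsto (\langle f, e_n\rangle)_{n\in\N}$ and $T : \ell^2(\N) \to X$ is the synthesis operator $T((c_n)_{n\in\N}) = \sum_n c_n f_n$; the upper frame bound guarantees that $T$ is well-defined and bounded with $\norm{T} \leq \sqrt{C}$, and so $U$ is bounded. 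It remains to show that $U$ is surjective, which, since $V$ is unitary, reduces to showing $T$ is surjective. The frame operator $S := TT^*$ sends $f \mapsto \sum_n \langle f, f_n\rangle f_n$, and the frame inequalities are exactly the statement $c\,\id \leq S \leq C\,\id$ in the operator order; hence $S$ is invertible, which forces $T$ to be surjective.

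The main obstacle is the same in both halves: translating the lower frame bound into a statement about surjectivity (and vice versa) via the bounded-below/surjectivity duality for adjoints on Hilbert space. Everything else, including convergence of the series defining $U$, the identification $Ue_n = f_n$, and the norm bookkeeping, is routine once the synthesis operator $T$ has been introduced.
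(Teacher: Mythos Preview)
The paper does not supply its own proof of this statement; it simply quotes it as \cite[Theorem 5.5.4]{Christensen2003}. Your argument is correct and is essentially the standard one found there: the identity $\sum_n \abs{\langle f, Ue_n\rangle}^2 = \norm{U^*f}^2$ reduces both frame inequalities to the bounded-below/surjectivity duality for adjoints (recorded in the paper as Lemma~\ref{lem:surjective}), and in the converse direction the invertibility of the frame operator $S = TT^*$ forces the synthesis operator to be surjective. In fact, the paper's own proof of the closely related Corollary~\ref{frameb} uses exactly the same mechanism via equation~\eqref{transpose}, so your approach is fully aligned with how the surrounding material is handled.
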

Similarly we can show that frames $\cb{f_n: n \in\N}$ constitute frames $\cb{ Af_n: n\in\N}$ in $\Hi$ if and only if the linear operator $A$ is bounded and surjective. Before we prove this we state an important Lemma on surjective linear operators in the formulation of \cite[Lemma 2.4.1]{Christensen2003} which is proven in \cite[Theorem 4.13]{Rudin1991}.

\begin{lemma}\label{lem:surjective}
	Suppose $A : X \to X$ is a bounded linear operator on a Hilbert space $X$. Then the following two conditions are equivalent:
	\begin{enumerate}[label=(\roman*)]
		\item $A$ is surjective.
		\item There exists a constant $C > 0$ such that for all $y \in X$ 
		$$\norm{A^*y} \geq C\norm{y}.$$
	\end{enumerate}
\end{lemma}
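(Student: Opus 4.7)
The plan is to treat the two implications separately, invoking the open mapping theorem in one direction and a Hahn--Banach / Riesz extension in the other.

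For (i) $\Rightarrow$ (ii), I would start from the fact that a surjective bounded linear map between Banach spaces is open, so there exists $\delta>0$ such that the unit-ball image $A(B_1(0))$ contains the ball $B_\delta(0)$. Passing to the adjoint and computing the operator norm by duality, for every $y\in X$
\[
\norm{A^*y} = \sup_{\norm{x}\leq 1}\abs{\scp{A^*y,x}} = \sup_{\norm{x}\leq 1}\abs{\scp{y,Ax}} \geq \sup_{\norm{z}\leq \delta}\abs{\scp{y,z}} = \delta\norm{y},
\]
which is exactly (ii) with $C=\delta$.

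For (ii) $\Rightarrow$ (i), the hypothesis $\norm{A^*y}\geq C\norm{y}$ forces $A^*$ to be injective and bounded below; in particular $\ran(A^*)$ is closed, because if $A^*y_n$ is Cauchy then so is $y_n$ and the limits match. Given an arbitrary target $y\in X$ for which a preimage under $A$ is to be constructed, I would define the linear functional $\phi$ on $\ran(A^*)$ by $\phi(A^*z):=\scp{z,y}$; this is well-defined thanks to the injectivity of $A^*$, and bounded by $C^{-1}\norm{y}$ by the chain $\abs{\scp{z,y}}\leq \norm{z}\norm{y}\leq C^{-1}\norm{A^*z}\norm{y}$. Extending $\phi$ to all of $X$ via Hahn--Banach and representing the extension through the Riesz representation theorem as $\phi(w)=\scp{w,x}$ for some $x\in X$ yields $\scp{z,y}=\scp{A^*z,x}=\scp{z,Ax}$ for all $z$, hence $y=Ax$.

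The main obstacle is the second implication: one must recognise that the lower bound on $A^*$ is precisely what is needed \emph{both} to make $\phi$ well-defined on $\ran(A^*)$ (injectivity of $A^*$) \emph{and} to control its norm (the estimate), so that Hahn--Banach / Riesz produces the prescribed preimage. The first implication is, by comparison, a routine duality computation once the open mapping theorem is available.
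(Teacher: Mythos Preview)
Your argument is correct in both directions: the open mapping/duality computation for (i)~$\Rightarrow$~(ii) is clean, and for (ii)~$\Rightarrow$~(i) the injectivity and closed-range observations for $A^*$ are exactly what makes your functional $\phi$ well-defined and bounded, so the Hahn--Banach/Riesz construction of the preimage goes through.

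The paper itself does not give a proof of this lemma at all; it simply records the statement in the formulation of Christensen \cite[Lemma~2.4.1]{Christensen2003} and defers the proof to Rudin \cite[Theorem~4.13]{Rudin1991}. Your argument is essentially what one finds in those references (Rudin's version is phrased for general Banach spaces and their duals, whereas you exploit the Hilbert-space setting by invoking Riesz to realise the extended functional as an element of $X$). A slightly shorter Hilbert-space route for (ii)~$\Rightarrow$~(i) would be to combine the closed-range theorem with the identity $\overline{\ran A}=(\ker A^*)^\perp$: the lower bound forces $\ker A^*=\{0\}$ and $\ran A^*$ closed, hence $\ran A$ is closed and dense, so $\ran A=X$. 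Your constructive version has the advantage of producing the preimage explicitly, with the norm bound $\norm{x}\le C^{-1}\norm{y}$ built in.
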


\begin{corollary}\label{frameb}
	Suppose $X$ is a real or complex separable Hilbert space. Let $A\in\mathcal{L}(X)$. Then, the following three statements are equivalent:
	\begin{enumerate}[label=(\roman*)]
		\item For all frames $\cb{f_n: n \in\N}$ in $X$, the family $\cb{ Af_n: n\in\N}$ is a frame in $X$.
		\item There exists a frame  $\cb{f_n: n \in\N}$ in $X$ such that the family $\cb{ Af_n: n\in\N}$ is a frame in $X$.
		\item $A$ is surjective.
	\end{enumerate}
\end{corollary}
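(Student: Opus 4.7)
The plan is to use Theorem \ref{thm:frame_characterization} as the main engine, thereby reducing the three frame-theoretic statements to the purely operator-theoretic notion of surjectivity, and then closing the chain with Lemma \ref{lem:surjective} where needed. Concretely, I would fix once and for all an orthonormal basis $\{e_n : n\in\N\}$ of $X$ and, for any frame $\{f_n : n\in\N\}$, write $f_n = Ue_n$ for a bounded surjective $U \in \mathcal{L}(X)$ given by Theorem \ref{thm:frame_characterization}. Then $Af_n = (AU)e_n$, so the question of whether $\{Af_n\}$ is a frame becomes, via the same theorem, the question of whether $AU$ is surjective.

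The implication (i) $\Rightarrow$ (ii) is immediate, since any orthonormal basis is itself a frame (take $U = \id$). For (iii) $\Rightarrow$ (i), let $\{f_n\} = \{Ue_n\}$ be an arbitrary frame with $U$ bounded and surjective. Then $AU$ is bounded as a composition of bounded operators, and surjective because $\ran(AU) = A(\ran U) = A(X) = X$ by surjectivity of $A$. Applying Theorem \ref{thm:frame_characterization} in the reverse direction yields that $\{(AU)e_n\} = \{Af_n\}$ is a frame in $X$.

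For (ii) $\Rightarrow$ (iii), assume $\{f_n\} = \{Ue_n\}$ and $\{Af_n\} = \{(AU)e_n\}$ are both frames. By Theorem \ref{thm:frame_characterization}, $AU$ is surjective, hence $X = \ran(AU) \subset \ran(A)$, giving surjectivity of $A$.

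The main obstacle, if one avoids Theorem \ref{thm:frame_characterization}, is establishing the lower frame bound for $\{Af_n\}$ from surjectivity of $A$: one would need to rewrite $\sum_n|\langle g, Af_n\rangle|^2 = \sum_n|\langle A^*g, f_n\rangle|^2$, use the lower frame bound of $\{f_n\}$ to get $\geq c\,\|A^*g\|^2$, and then invoke Lemma \ref{lem:surjective} to convert this into $\geq cK^2\|g\|^2$ for some $K > 0$; the corresponding upper bound follows from $\|A^*g\|\leq \|A\|\|g\|$. Conversely, for (ii) $\Rightarrow$ (iii) one would combine the two frame inequalities to deduce $\|A^*g\|^2 \geq (c_{Af}/C_f)\|g\|^2$ and apply Lemma \ref{lem:surjective}. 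This alternative route is essentially an unfolding of the proof of Theorem \ref{thm:frame_characterization} itself, which is why I would prefer the shorter argument through that theorem.
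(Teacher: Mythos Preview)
Your proof is correct, and your primary route is genuinely different from the paper's. You reduce everything to Theorem \ref{thm:frame_characterization}: writing $f_n = Ue_n$ with $U$ bounded and surjective, the frame property of $\{Af_n\}$ becomes the surjectivity of $AU$, so all three equivalences collapse to elementary facts about ranges of compositions. The paper instead argues (ii) $\Rightarrow$ (iii) directly from the frame inequalities (essentially your ``alternative route''): it combines the two pairs of frame bounds with the identity $\sum_n|\langle f, Af_n\rangle|^2 = \sum_n|\langle A^*f, f_n\rangle|^2$ to obtain $\|A^*f\|^2 \ge (c'/C)\|f\|^2$, and then invokes Lemma \ref{lem:surjective}; for (iii) $\Rightarrow$ (i) it simply cites \cite[Corollary 5.3.2]{Christensen2003}. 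Your approach is shorter and more conceptual, buying you uniformity across all three implications at the cost of leaning on the structural characterization of frames; the paper's approach is more self-contained for (ii) $\Rightarrow$ (iii) but then outsources (iii) $\Rightarrow$ (i) to an external reference. Either way, the content is the same.
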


\begin{proof}
	(i) $\Rightarrow$ (ii):
	
	This implication is trivial.
	
	(ii) $\Rightarrow$ (iii):
	
	Suppose $0<c<C$ are the frame bounds of the frame $\cb{ f_n: n\in\N}$.
 	By (ii), $\cb{ Af_n: n\in\N}$ is also a frame, hence there exist frame bounds $0<c'<C'$ such for all $f\in X$:
	\[c'\|f\|^2 \le \sum_{n\in\N} \abs{\scp{f, Af_n}}^2 \le C'\|f\|^2.\]
	We can transform the middle term into 
	\begin{equation}\label{transpose}
	\sum_{n\in\N} \abs{\scp{f, Af_n}}^2 = \sum_{n\in\N} \abs{\scp{ A^*f,f_n}}^2.
	\end{equation}
	Using the frame bounds $0<c<C$ of the frame $\cb{f_n:n\in\N}$ in $X$, we get
	\[c\| A^*f\|^2\le\sum_{n\in\N} \abs{\scp{f, Af_n}}^2\le C\| A^*f\|^2\]
	for all $f\in X$. Consequently,
	\[c\| A^*f\|^2\le C'\|f\|^2\quad\text{and}\quad c'\|f\|^2\le C\| A^*f\|^2.\]
	The first inequality guarantees the boundedness of $A^*$ and hence of $A$, while the second inequality ensures that $A$ is surjective due to Lemma \ref{lem:surjective}.
	
	(iii) $\Rightarrow$ (i):
	
	This is \cite[Corollary 5.3.2]{Christensen2003}. 
\end{proof}

We see: The natural homomorphisms on the set of frames of a Hilbert space are the bounded surjective linear operators. Unfortunately, there are pairs of frames, which cannot be mapped onto each other by such an operator $A: X \to X$.

\begin{example}[Incompatible frames]
	\label{ex:incomp_frames}
	Consider an arbitrary orthonormal basis $\{e_n : n \in \N\} \subset X$. This basis can be transformed into two frames by defining the frames as $$f_1 := e_1, \quad f_2 := e_1, \quad f_{i+1} := e_i \quad \mbox{for all } i \geq 2$$ and 
	$$g_1 := e_1, \quad g_2 := e_2, \quad g_{i+1} := e_i \quad \mbox{for all } i \geq 2.$$
	Clearly it is impossible to define an element-wise mapping of the first frame onto the second one or vice-versa.
\end{example}

Hence the structure of the set of all frames for a Hilbert space $X$ is much more complicated than the set of all orthonormal bases or Riesz bases due to the lack of isomorphisms between its elements.
Frames may differ by their amount of excess (redundancy). If we have a  bounded, surjective linear operator $A: X \to X$  which maps a frame onto another, then there are two possible cases: Either the excess of the two frames is equal, or the operator $A$ increases the excess, i.e. there are chains of frames linked by bounded surjective linear operators generating increased excess. What complicates the situation even more: There is not a single chain but there are multiple chains having incompatible excess as shown in the Example \ref{ex:incomp_frames} above. Therefore the set of all frames is a tree whose root is the set of all Riesz bases. We investigate this further in subsection \ref{subsec:rebricking_incomp_frames}.

In summary the canonical generalization of our ansatz $\id +iA$ to the frame world is to require $A$ to be bounded and surjective. But as seen in the example above it is not possible to cover all couples of frames:

\begin{example}	
	\label{ex:nonrebrickable_frames}
	We again use the two frames $\{f_n :\,n \in \N\},\{g_n :\,n \in \N\} \subset \Hi$ from Example \ref{ex:incomp_frames}. They cannot be mapped onto each other, i.e. there is no linear operator $A$ to use either of them as the real part and the other one as the imaginary part. 
	Even though the system $\{f_n + i g_n : n \in \N \}$ is a frame. For all $f\in\Hi + i \Hi$,
	
	\begin{align*}\sum_{n \in \N} \abs{\langle f,f_n+ig_n\rangle}^2 &= \sum_{n \in \N} \abs{\langle f,e_n+ie_n\rangle}^2 + \abs{\langle f,e_1+ie_2\rangle}^2 \\
	&= \sum_{n \in \N} \abs{\langle (1-i)f,e_n\rangle}^2 + \abs{\langle f,e_1+ie_2\rangle}^2 \\
	&= \norm{(1-i)f}^2 + \abs{\langle f,e_1+ie_2\rangle}^2\\
	&= 2\norm{f}^2 + \abs{\langle f,e_1+ie_2\rangle}^2. \end{align*}
	Clearly this expression can be bounded from above by $4\norm{f}^2$ because \begin{align*}2\norm{f}^2 + \abs{\langle f,e_1+ie_2\rangle}^2 &\leq 2\norm{f}^2 + \norm{f}^2\norm{e_1+ie_2}^2\\ &= 2\norm{f}^2 + \norm{f}^2(\norm{e_1}^2+\norm{e_2}^2) = 4\norm{f}^2.\end{align*}
	and from below by $2\norm{f}^2$. Thus we have \[2\norm{f}^2 \leq \sum_{n \in \N} \abs{\langle f,f_n+ig_n\rangle}^2 \leq 4\norm{f}^2.\] 
	In the notion of Definition \ref{Def Rebrickability Frames}, the two frames are rebrickable, however there doesn't exist a rebricking operator $A$ such that $f_{n} + i g_{n} = (\id + iA) f_{n}$ for all $n\in\N$.
\end{example}

For frames, we have two possible approaches for rebricking:
\begin{enumerate}[label=(\roman*)]
\item via rebricking two real frames and 
\item via a rebricking operator $A$ applied to arbitrary real frames.
\end{enumerate}

Note that the ansatz (ii) via the rebricking operator does not translate 1-1 from the Riesz basis setting to the frame setting, as it only allows for a subset of frame couples to be investigated, namely the couples  $\{ (f_n, \, g_n := Af_n) : \, n \in \N\}$ where $\{f_n :\,n \in \N\} \subset \Hi$ is a frame and $A : \Hi \to \Hi$ is bounded, linear and surjective. Intuitively spoken, $\{g_n :\,n \in \N\}$ has at least the same excess as $\{f_n :\,n \in \N\}$. The condition $g_n = Af_n$ for all $n\in\N$ implies that the two frames are compatible via the mapping $A$, i.e. they are within the same chain in the tree of frames.
 
Similar to the cases of Riesz bases and orthonormal bases not all couples $\{ (f_n, \ g_n := Af_n) :\, n \in \N\}$ constitute a frame $\{ f_n +i g_n ,\, n \in \N\} \subset \Hi + i\Hi$. There need to be conditions on $A$.
Luckily these conditions are similar to the condition on the spectrum of the operator $A$ in the Riesz basis case. In fact the condition for frames refers to on the approximate point spectrum \cite{Rudin1991,Werner2011,mueller2000} and the surjectivity spectrum \cite{Laursen1989} which are both subsets of the classical spectrum .

\begin{defn}[Approximate point spectrum] 
	Let $A:X \to X$ be a bounded linear operator on an arbitrary (real or complex) Hilbert space $X$. $\lambda \in \C$ is called approximate eigenvalue if there exists  a sequence $\{f_n : n\in\N\} \subset X$ such that $\norm{f_n} = 1$ for all $n \in \N$ and \[\lim_{n\to\infty}\norm{Af_n - \lambda f_n} = 0.\] The set of approximate eigenvalues is known as the approximate point spectrum, denoted by $\sigma_\mathrm{app}(A)$.
\end{defn}

\begin{defn}[Surjectivity spectrum]
	Let $A:X \to X$ be a linear operator on an arbitrary (real or complex) Hilbert space $X$. The surjectivity spectrum of $A$ is defined as
	$$\sigma_\mathrm{sur}(A) = \cb{\lambda\in\C:\ (A-\lambda\,\id)X=X}.$$
\end{defn}

For a more detailed survey on the surjective spectrum, we refer the reader to \cite{Laursen1989}.

As the next proposition shows, the approximate point spectrum and the surjectivity spectrum have an innate relation.

\begin{proposition}\label{prop:sigma}
	Let $A:X \to X$ be a linear operator on an arbitrary (real or complex) Hilbert space $X$. Then,
	$$\sigma_\mathrm{sur}(A)=\overline{\sigma_\mathrm{app}(A^*)}$$
\end{proposition}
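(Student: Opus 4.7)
The plan is to derive the equality by unwinding the two spectral definitions and applying the Hilbert-space duality between surjectivity and being bounded below, which has already been recorded in Lemma \ref{lem:surjective}. Reading the definition of $\sigma_\mathrm{sur}(A)$ in the natural way for a spectral set (i.e., the $\lambda$ for which $A-\lambda\,\id$ fails to be surjective on $X$), the proof is essentially a three-link chain of equivalences.

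First, I would apply Lemma \ref{lem:surjective} to the bounded operator $B:=A-\lambda\,\id$, whose Hilbert-space adjoint is $B^*=A^*-\bar\lambda\,\id$. The lemma yields: $A-\lambda\,\id$ is surjective on $X$ if and only if there exists a constant $C>0$ such that $\|(A^*-\bar\lambda\,\id)y\|\ge C\|y\|$ for every $y\in X$, i.e., if and only if $A^*-\bar\lambda\,\id$ is bounded below. Negating, $\lambda\in\sigma_\mathrm{sur}(A)$ if and only if $A^*-\bar\lambda\,\id$ fails to be bounded below.

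Second, I would translate the failure of being bounded below into membership in the approximate point spectrum. Failure of the bound $\|(A^*-\bar\lambda\,\id)y\|\ge C\|y\|$ for every $C>0$ allows one to choose, for each $n\in\N$, a unit vector $f_n\in X$ with $\|(A^*-\bar\lambda\,\id)f_n\|<1/n$; conversely, the existence of such a sequence rules out any positive lower bound. Hence $A^*-\bar\lambda\,\id$ is not bounded below if and only if there is a sequence of unit vectors $\{f_n\}$ with $\|A^*f_n-\bar\lambda f_n\|\to 0$, which by definition means exactly $\bar\lambda\in\sigma_\mathrm{app}(A^*)$.

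Chaining the two equivalences, $\lambda\in\sigma_\mathrm{sur}(A)$ if and only if $\bar\lambda\in\sigma_\mathrm{app}(A^*)$, which is the desired identity once one reads the overline as the pointwise complex conjugation of the set. I do not foresee a real obstacle here: the heavy lifting has been outsourced to Lemma \ref{lem:surjective} (whose proof uses the open mapping / closed range theorem from \cite{Rudin1991}). The only point requiring some care is the passage from "not bounded below" to the existence of an approximating sequence of unit vectors, which is a straightforward normalization argument but must be spelled out so that the equivalence with $\sigma_\mathrm{app}$ is airtight.
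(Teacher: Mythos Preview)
Your argument is correct. The paper does not actually give its own proof of this proposition; it simply refers to \cite{Laursen2000} (``up to complex conjugation''). Your route is more self-contained: you reduce everything to Lemma~\ref{lem:surjective}, which is already stated in the paper, together with the elementary observation that ``not bounded below'' is precisely the defining property of an approximate eigenvalue. This buys independence from the external reference and makes transparent why the complex conjugation appears (it comes from $(A-\lambda\,\id)^*=A^*-\bar\lambda\,\id$). You also correctly read the definition of $\sigma_{\mathrm{sur}}$ in the intended spectral sense---the $\lambda$ for which $A-\lambda\,\id$ \emph{fails} to be surjective---which is consistent with how the paper uses it in Theorem~\ref{rebrickapp}, even though the displayed set-builder in the definition is written the wrong way round.
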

For a proof see \cite{Laursen2000} up to complex conjugation.

\begin{theorem}\label{rebrickapp}
	Let $\Hi$ be a real separable Hilbert space and let $A\in \cal{L}(\Hi)$. Then, the following three statements are equivalent:
	\begin{enumerate}[label=(\roman*)]
		\item For all frames $\cb{f_n: n \in\N}$ in $\Hi$, the family $\cb{f_n+iAf_n: n\in\N}$ is a frame in $\Hi+i\Hi$.
		\item There exists a frame $\cb{f_n: n \in\N}$ in $\Hi$ such that the family $\cb{f_n+iAf_n: n\in\N}$ is a frame in $\Hi+i\Hi$.
		\item $i\notin\sigma_\mathrm{sur}(A)$.
		\item $-i \notin \sigma_\mathrm{app}(A^*)$.
	\end{enumerate}
\end{theorem}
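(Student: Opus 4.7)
The plan is to rely on the algebraic identity $\id + iA = i(A - i\,\id)$ together with Corollary \ref{frameb} applied to the complex Hilbert space $\Hi + i\Hi$. First I would extend $A$ by $\C$-linearity to a bounded operator $\tilde A \in \mathcal{L}(\Hi + i\Hi)$, noting that $\|\tilde A\| = \|A\|$. Then $B := \id + i\tilde A = i(\tilde A - i\,\id)$ is a bounded $\C$-linear operator on $\Hi + i\Hi$, and $B$ is surjective on $\Hi + i\Hi$ if and only if $\tilde A - i\,\id$ is, which is precisely the condition $i \notin \sigma_\mathrm{sur}(A)$.

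With this identity in hand, the equivalences (i) $\Leftrightarrow$ (ii) $\Leftrightarrow$ (iii) reduce to a single application of Corollary \ref{frameb}. The implication (i) $\Rightarrow$ (ii) is trivial. For (ii) $\Rightarrow$ (iii), I would invoke the preceding Lemma to view the given real frame $\{f_n\}$ as a frame of the complex space $\Hi + i\Hi$; then $\{Bf_n\} = \{f_n + iAf_n\}$ is by hypothesis also a frame of $\Hi + i\Hi$, so the implication (ii) $\Rightarrow$ (iii) of Corollary \ref{frameb}, applied in the complex setting, forces $B$ to be surjective, hence $i \notin \sigma_\mathrm{sur}(A)$. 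For (iii) $\Rightarrow$ (i), once $B$ is known to be bounded and surjective on $\Hi + i\Hi$, the implication (iii) $\Rightarrow$ (i) of Corollary \ref{frameb} maps every real frame $\{f_n\} \subset \Hi$ (which is also a frame of $\Hi + i\Hi$) to a frame $\{Bf_n\} = \{f_n + iAf_n\}$ in $\Hi + i\Hi$. The equivalence (iii) $\Leftrightarrow$ (iv) is immediate from Proposition \ref{prop:sigma}: since $\sigma_\mathrm{sur}(A) = \overline{\sigma_\mathrm{app}(A^*)}$ and $\overline{-i} = i$, we have $i \in \sigma_\mathrm{sur}(A)$ if and only if $-i \in \sigma_\mathrm{app}(A^*)$.

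The main obstacle is conceptual rather than computational: one must interpret $\sigma_\mathrm{sur}(A)$ and $\sigma_\mathrm{app}(A^*)$ consistently via the canonical $\C$-linear extensions of $A$ and $A^*$ to $\Hi + i\Hi$, so that Corollary \ref{frameb} and Proposition \ref{prop:sigma} can legitimately be invoked in the complex Hilbert-space context. Once this identification is in place, the theorem is essentially a composition of the rebricking identity $B = i(A - i\,\id)$ with previously established operator-theoretic results, and no further estimates are required.
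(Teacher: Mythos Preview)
Your proposal is correct and follows essentially the same route as the paper: the equivalences (i)~$\Leftrightarrow$~(ii)~$\Leftrightarrow$~(iii) are obtained by applying Corollary~\ref{frameb} to $B=\id+iA=i(A-i\,\id)$ on $\Hi+i\Hi$, and (iii)~$\Leftrightarrow$~(iv) is read off from Proposition~\ref{prop:sigma}. You have simply spelled out in more detail the complexification step and the identity $B=i(A-i\,\id)$ that the paper leaves implicit.
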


\begin{proof}
	The equivalences (i) $\gdw$ (ii) $\gdw$ (iii) follow immediately from Corollary \ref{frameb}.
	
Moreover, we conclude the equivalence (iii) $\Leftrightarrow$ (iv) via Proposition \ref{prop:sigma}.
\end{proof}

We summarize the results: 
\begin{corollary}
\label{Kor rebrickability frames}
		Let $\Hi$ be a real separable Hilbert space, let $\{f_n: n\in\N\}$ be a frame in $\Hi$ and let $A\in \mathcal{L}(\Hi)$. Then, the following two statements are equivalent:
		\begin{enumerate}[label=(\roman*)]
			\item The families $\cb{Af_n: n\in\N}$ and $\cb{f_n+iAf_n: n\in\N}$ are frames in $\Hi+i\Hi$.
			\item $A$ is surjective and $-i \notin \sigma_\mathrm{app}(A^*)$.
		\end{enumerate}
\end{corollary}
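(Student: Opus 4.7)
The plan is to observe that this Corollary is essentially a bookkeeping combination of two previously established results: Corollary \ref{frameb} (which characterizes when $A$ maps a frame to a frame by surjectivity) and Theorem \ref{rebrickapp} (which characterizes rebrickability of frames by the condition $-i\notin\sigma_\mathrm{app}(A^*)$, or equivalently $i\notin\sigma_\mathrm{sur}(A)$). Both characterizations contain an equivalence between a statement for \emph{one} fixed frame $\{f_n:n\in\N\}$ and a statement for \emph{all} frames, which is exactly the flexibility we need here, since the Corollary fixes a single frame.

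The forward direction (i) $\Rightarrow$ (ii) would proceed as follows. Assume that $\cb{Af_n:n\in\N}$ is a frame in $\Hi+i\Hi$, hence also a frame in $\Hi$ with the same bounds by the embedding lemma at the beginning of the section. Applying Corollary \ref{frameb} via its implication (ii) $\Rightarrow$ (iii) to the frame $\{f_n:n\in\N\}$ immediately yields that $A$ is surjective. Next, assuming $\cb{f_n+iAf_n:n\in\N}$ is a frame in $\Hi+i\Hi$, Theorem \ref{rebrickapp} via its implication (ii) $\Rightarrow$ (iv) yields $-i\notin\sigma_\mathrm{app}(A^*)$.

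For the converse (ii) $\Rightarrow$ (i), we reverse both arguments using the other directions of the same two results. Surjectivity of $A$ combined with the implication (iii) $\Rightarrow$ (i) in Corollary \ref{frameb} gives that $\cb{Af_n:n\in\N}$ is a frame in $\Hi$ (and hence in $\Hi+i\Hi$). The condition $-i\notin\sigma_\mathrm{app}(A^*)$ together with Theorem \ref{rebrickapp} (implication (iv) $\Rightarrow$ (i)) yields that $\cb{f_n+iAf_n:n\in\N}$ is a frame in $\Hi+i\Hi$.

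Since everything reduces to citing previously proved equivalences, there is no genuine technical obstacle; the only care needed is to match the two spectral conditions used, namely $i\notin\sigma_\mathrm{sur}(A)$ and $-i\notin\sigma_\mathrm{app}(A^*)$, which are identified through Proposition \ref{prop:sigma}. The proof is therefore essentially a one-line combination, and I would present it as such, without any additional computation.
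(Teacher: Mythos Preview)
Your proposal is correct and matches the paper's approach exactly: the paper presents this corollary under the heading ``We summarize the results'' with no explicit proof, so it is intended precisely as the bookkeeping combination of Corollary~\ref{frameb} and Theorem~\ref{rebrickapp} that you describe. Your write-up is in fact more detailed than the paper's (nonexistent) proof, and the care you take with the real/complex embedding and with matching the spectral conditions via Proposition~\ref{prop:sigma} is appropriate.
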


\begin{remark}\label{rb vs frame}
We compare  Corollary \ref{Kor rebrickability frames} with the special case of the Riesz bases in Theorem \ref{thm_lift_riesz_bases}.
	\begin{enumerate}[label=(\roman*)]
		\item In the Riesz basis case, rebricking operators and rebricking pairs of Riesz bases are equivalent concepts. In the frame case, this equivalence does not hold anymore. Rebrickability by a rebricking operator is a special case of the general rebrickability of pairs of frames.
		\item In the Riesz basis case, a rebricking operator $A$ has to be invertible. For frames, this property is relaxed to only requiring that the rebricking operator $A$ is surjective. 
		\item For the rebrickability relation in the Riesz basis case, we had symmetry, cf. Lemma \ref{L Riesz case Properties of the rebrickability relation}.
		For the frame case this in general does not hold anymore: The imaginary part is built from  the surjective operator $A$ applied to the real part. Its excess is at least the one of the real part. If the excess is increased with the rebricking operator A then an inverse operator to map the imaginary part to the real part does not exist.
		\item In the Riesz basis case the rebricking operator $A$ must not contain $-i$ in its spectrum. This is generalized to the condition that $-i$ must not be in the approximate spectrum. This is indeed a generalization as $$\sigma_p(A^*) \subset \sigma_\mathrm{app}(A^*) \subset \sigma(A^*)$$ where $\sigma_p(A^*)$ denotes the point spectrum (set of eigenvalues).
	\end{enumerate}

	The second observation shows once again the interesting differences between the finite-dimensional world and the infinite-dimensional world. In the case of finite-dimensional vector spaces we have $\sigma_p(A^*) = \sigma(A^*)$, and here, the approximate spectrum is no concept of its own. Moreover, on finite-dimensional vector spaces, there do not exist surjective linear operators which are not injective.
\end{remark}

From these thoughts we can immediately deduce necessary and sufficient criteria for rebricking.
\begin{corollary}
\label{Cor necessary and not sufficient}
	Let $A \in \mathcal{L}(\Hi)$.
	\begin{enumerate}[label=(\roman*)]
		\item The property that $A^*$ has no eigenvalue $-i$ is necessary but not sufficient for $\id +iA$ to be surjective.
		\item If $-i \notin \sigma(A^*)$ then $\id +iA$ is surjective. (This is a sufficient but not necessary condition)
	\end{enumerate}	
\end{corollary}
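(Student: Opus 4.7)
The plan is to deduce both assertions directly from Theorem \ref{rebrickapp}, which characterizes surjectivity of $\id+iA$ by the condition $-i\notin \sigma_\mathrm{app}(A^*)$, combined with the elementary chain of inclusions
\[
\sigma_p(A^*)\;\subseteq\; \sigma_\mathrm{app}(A^*)\;\subseteq\; \sigma(A^*),
\]
and to supply two shift-based counterexamples witnessing the phrases \emph{not sufficient} in (i) and \emph{not necessary} in (ii).

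For (i), the necessity is immediate: if $\id+iA$ is surjective, Theorem \ref{rebrickapp} yields $-i\notin \sigma_\mathrm{app}(A^*)$, and hence $-i\notin \sigma_p(A^*)$. For non-sufficiency I would set $\Hi=\ell^2(\mathbb{N};\R)$ and take $A$ to be the unilateral right shift; its adjoint $A^*$ is then the left shift, whose complexification has point spectrum equal to the open unit disk (the candidate eigenvectors $(1,\lambda,\lambda^2,\ldots)$ lie in $\ell^2$ only when $|\lambda|<1$) and approximate point spectrum equal to the closed unit disk (on the boundary one produces approximate eigenvectors of the form $N^{-1/2}(1,\lambda,\lambda^2,\ldots,\lambda^{N-1},0,\ldots)$). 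Therefore $-i$ lies in $\sigma_\mathrm{app}(A^*)\setminus\sigma_p(A^*)$, and Theorem \ref{rebrickapp} shows that $\id+iA$ fails to be surjective although $-i$ is not an eigenvalue of $A^*$.

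For (ii), sufficiency follows from $-i\notin\sigma(A^*)\supseteq\sigma_\mathrm{app}(A^*)$ combined with Theorem \ref{rebrickapp}. For non-necessity I need $-i$ to lie in the residual part $\sigma(A^*)\setminus\sigma_\mathrm{app}(A^*)$, which can be arranged by scaling the shift so that $-i$ sits strictly inside the spectral disk but off the approximate-spectrum circle. Concretely, I would take $A=2L$ on $\ell^2(\mathbb{N};\R)$ with $L$ the left shift, so that $A^*=2R$ with $R$ the isometric right shift; standard spectral computations then give $\sigma(A^*)=\{z\in\C:|z|\le 2\}$ and $\sigma_\mathrm{app}(A^*)=\{z\in\C:|z|=2\}$. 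Because $|-i|=1$, we obtain $-i\in\sigma(A^*)\setminus\sigma_\mathrm{app}(A^*)$, and Theorem \ref{rebrickapp} still guarantees surjectivity of $\id+iA$ even though $-i\in\sigma(A^*)$. The only genuine obstacle throughout is calibrating the shift examples so that $-i$ lands in precisely the intended piece of the spectrum of $A^*$; beyond this, the argument reduces to pure bookkeeping with the displayed chain of inclusions and Theorem \ref{rebrickapp}.
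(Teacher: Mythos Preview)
Your argument is correct and tracks the paper's reasoning closely. The logical deductions for ``necessary'' in (i) and ``sufficient'' in (ii) are exactly what the paper intends: the chain $\sigma_p(A^*)\subset\sigma_\mathrm{app}(A^*)\subset\sigma(A^*)$ combined with Theorem~\ref{rebrickapp} (cf.\ Remark~\ref{rb vs frame}(iv)). Your counterexample for the ``not necessary'' part of (ii), namely $A=2L$ on $\ell^2(\N;\R)$ with $A^*=2R$, is in fact the very example the paper presents immediately after Lemma~\ref{frame-rebricking-conditions}.

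The only genuine difference is your counterexample for the ``not sufficient'' part of (i). The paper (Example~\ref{example-eigenvalue-not-sufficient}) constructs a Fourier multiplier on $L^2(\R;\R)$ with symbol $m$ approaching $i$ but never equal to $i$, producing a \emph{bijective} operator $A$ whose adjoint has $-i$ as an approximate but not a genuine eigenvalue. You instead take $A$ to be the unilateral right shift on $\ell^2(\N;\R)$, so that $A^*$ is the left shift and $-i$ sits on the boundary circle $\sigma_\mathrm{app}(A^*)\setminus\sigma_p(A^*)$. Your example is simpler and entirely valid for the corollary as stated. The paper's choice has the slight advantage that its $A$ is bijective (hence in particular surjective), so the example also fits the surrounding narrative in which $A$ is itself a frame-preserving map; your right shift is not surjective, but the corollary does not assume this, so nothing is lost.
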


The next lemma characterizes conditions for $A$ being suitable for frame rebricking. There the complex conditions (i) and (ii) can be converted to the real condition (iii). Furthermore we can replace the often difficult way of proving that a concrete operator $\id + iA$ is surjective by an inequality condition on the adjoint operator. It turns out that the latter is extremely useful in lots of cases.
\begin{lemma}
	\label{frame-rebricking-conditions}
	Let $A \in \mathcal{L}(\Hi)$ be surjective. Then there are equivalent: \begin{enumerate}[label=(\roman*)]
		\item $\id + iA$ is surjective in $\Hi + i\Hi$;
		\item $\id - iA$ is surjective in $\Hi + i\Hi$;
		\item $\id + A^2$ is surjective in $\Hi$.
	\end{enumerate}
	
	If in addition for $A^*$ there exists $c > 1$ such that for all $y \in \Hi$ 
	$$ \norm{A^*y} \geq c\norm{y}$$
	 then $\id + iA$ is surjective onto $\Hi + i\Hi$. 
	 
	 The converse is not true in general.
\end{lemma}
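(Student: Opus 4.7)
The plan splits into three parts: the equivalence of (i), (ii), (iii), which is essentially algebraic; the analytic sufficient condition involving $c > 1$; and a counterexample showing the converse need not hold.

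For the equivalences, I would exploit the factorization
\[(\id + iA)(\id - iA) = (\id - iA)(\id + iA) = \id + A^2.\]
Since $A$ is a real operator, given a preimage $x$ of $\bar y$ under $\id + iA$, the conjugate $\bar x$ is a preimage of $y$ under $\id - iA$, and vice versa, which yields (i)$\Leftrightarrow$(ii). For (i)$\Rightarrow$(iii), the composition $\id + A^2$ is then surjective on $\Hi + i\Hi$, and extracting real and imaginary parts of a preimage of $y \in \Hi \subset \Hi + i\Hi$ gives surjectivity on $\Hi$, since $\id + A^2$ is real and therefore preserves the real/imaginary decomposition. For (iii)$\Rightarrow$(i), $\id + A^2$ extends componentwise to a surjection on $\Hi + i\Hi$, and the general fact that if a composition $ST$ is surjective then $S$ is surjective, applied to both orderings in the factorization, yields (i) (and (ii)).

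For the sufficient condition, the strategy is to identify the complex adjoint $(\id + iA)^* = \id - iA^*$ and apply Lemma~\ref{lem:surjective}, which reduces the claim to a lower bound $\norm{(\id - iA^*)y} \geq C \norm{y}$ for all $y \in \Hi + i\Hi$. Writing $y = y_r + i y_i$ with $y_r, y_i \in \Hi$ and expanding gives
\[\norm{(\id - iA^*)y}^2 = \norm{y_r}^2 + \norm{y_i}^2 + \norm{A^* y_r}^2 + \norm{A^* y_i}^2 + 2\bigl(\langle A^* y_r, y_i\rangle_\Hi - \langle A^* y_i, y_r\rangle_\Hi\bigr).\]
I would absorb the cross terms via Young's inequality with a parameter $\epsilon > 0$ and then invoke $\norm{A^* z}^2 \geq c^2 \norm{z}^2$, leaving a coefficient of $\norm{y}^2$ equal to $1 - \epsilon + c^2 - c^2/\epsilon = -(\epsilon - 1)(\epsilon - c^2)/\epsilon$. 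The main obstacle is this sign analysis: the coefficient is strictly positive precisely when $\epsilon \in (1, c^2)$, an interval that is nonempty exactly because of the hypothesis $c > 1$, and any such choice of $\epsilon$ then yields the required constant $C$.

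To show the converse fails, I would take $A = \id$: it is surjective, $\norm{A^* y} = \norm{y}$ for every $y$ so no $c > 1$ satisfies the inequality, yet $\id + iA = (1 + i)\id$ is a nonzero scalar multiple of the identity and is therefore bounded and surjective on $\Hi + i\Hi$.
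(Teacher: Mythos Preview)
Your argument is correct in all three parts. The equivalences and the counterexample $A=\id$ match the paper's proof essentially verbatim.

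For the sufficient condition, however, the paper's route is considerably shorter. Observe first that the hypothesis $\norm{A^*y}\geq c\norm{y}$ extends from $\Hi$ to $\Hi+i\Hi$ by decomposing $y=y_r+iy_i$ and using that $A^*$ is real: $\norm{A^*y}^2=\norm{A^*y_r}^2+\norm{A^*y_i}^2\geq c^2\norm{y}^2$. With this in hand, the paper simply applies the reverse triangle inequality:
\[
\norm{(\id - iA^*)y} \geq \bigl|\,\norm{A^*y} - \norm{y}\,\bigr| \geq c\norm{y} - \norm{y} = (c-1)\norm{y},
\]
and Lemma~\ref{lem:surjective} finishes the job. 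Your expansion into real and imaginary parts followed by Young's inequality with a parameter does work and even gives a tunable constant, but it is more labor than necessary; the reverse triangle inequality delivers the result in one line once the hypothesis is lifted to $\Hi+i\Hi$.
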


\begin{proof}
	The three items are identical to the corresponding Lemma \ref{lem:complex-bijective-to-real-bojective} for Riesz bases.
	
	To prove the supplement we note that
	\[\norm{(\id - iA^*)y} \geq \abs{\norm{A^*y} - \norm{y}} \geq c\norm{y} - \norm{y} = (c-1)\norm{y}. \] 
	Thus by \cite[Corollary 5.3.2]{Christensen2003} we conclude that $\id +iA$ is surjective.
	
	As a counterexample for the converse simply consider $A = \id$.
\end{proof}

\begin{example}
	We construct an operator $A \in \mathcal{L}(\ell^2(\N;\R))$ such that
	\begin{enumerate}[label=(\roman*)]
		\item $A$ is real and surjective, but not injective.
		\item $\id +iA$ is surjective, but not injective.
		\item $A^*$ has no eigenvalue $-i$.
	\end{enumerate} 
	
	We take \[A: \ell^2(\N;\R) \to \ell^2(\N;\R),\quad(Ax)_n := 2x_{n+1}.\] Clearly
	\begin{itemize}
		\item $A$ is not injective, as $\ker A = \Span\{(1,0,0,\ldots)\}$.
		\item $A$ is surjective.
		\item The set of eigenvalues of $A$ is $\{\lambda \in \C: \abs{\lambda} < 2 \}$, thus $\id +iA$ is not injective.
		\item $\id + iA$ is surjective, because $\norm{A^*y} = 2\norm{y}$ for all $y \in \ell^2(\N;\C)$. In particular $A^*$ has no eigenvalue $-i$.
	\end{itemize}
\end{example}

The next two examples illustrate Corollary \ref{Cor necessary and not sufficient}:

\begin{example}[Adjoint Operator without eigenvalue $-i$ not sufficient]
	\label{example-eigenvalue-not-sufficient}
	We construct a bijective operator $A \in \mathcal{L}(\RLtwo)$ (in this case $A^*$ is bijective, too) such that $A^*$ has no eigenvalue $-i$ but an approximate eigenvalue $-i$, i.e. $\id +iA$ is not surjective. 
	
	For this it suffices to construct $m \in L^\infty(\R,\C)$ such that 
	\begin{itemize}
	\item $m(\omega) = \overline{m(-\omega)}$, $m(\omega) \neq 0$, $m(\omega) \neq i$ almost everywhere, and
	\item   $m$ has a limit point $i$.
	\end{itemize}
	For instance, this can be done by setting 
	$$m|_{[2,\infty)}(\omega) = i-\frac{i}{\omega} \quad \mbox{and}\quad m|_{(0,2)} = \frac{i}{2},$$ and extending the function to $\R$. Then we define the operator $A$ as \[A: \RLtwo \to \RLtwo,\quad Af := \F^{-1}(m\, \F f),\]
	where $\F$ denotes the Fourier transform.
	Because of the property $m(\omega) = \overline{m(-\omega)}$ the operator is real-valued, hence well-defined. By elementary computations we can see that the adjoint operator is \[A^*f = \F^{-1}(\overline{m}\, \F f).\]
	\begin{itemize}
		\item $A$ is bijective because $1/m \in L^\infty(\R)$ as $\frac{1}{2} \leq \abs{m(\omega)} \leq 1$ almost everywhere.
		\item $\id - iA^*$ is injective, i.e. $A^*$ has no eigenvalue $-i$ because $$-if = A^*f \quad \Leftrightarrow\quad  -i\F f = \overline{m} \F f$$ for some $f \in \CLtwo$ has  the unique solution $f = 0$ as $\overline{m} \neq -i$ almost everyhwere. This also shows that $\id +iA$ has dense range.
		\item $\id +iA$ is not surjective. To see this consider the sequence $\{g_n : n \in \N\} \subset \CLtwo $ with  $\F g_{n} = \chi_{[n,n+1)}$. 
		
		\[\norm{(\id -iA^*)g_{n}} = \norm{\F^{-1}((1-i\,\overline{m})\F g_{n})} = \norm{(1-i\,\overline{m})\F g_{n}}.\] There exists no constant $c > 0$ such that for all $n\in\N$ \[\norm{(1-i\,\overline{m})\F g_{n}} \geq c\norm{g_{n}},\] 
because $\norm{(1-i\,\overline{m})\F g_{n}} \rightarrow 0$ for $n \rightarrow \infty$ by construction, but $\norm{g_n}$ is constant for all $n \in \N$.
	\end{itemize}
\end{example}

\subsection{Rebricking of incompatible frames}
\label{subsec:rebricking_incomp_frames}
As mentioned earlier, Theorem \ref{rebrickapp} has the slight drawback that two frames which are to be rebricked together have to be compatible in the sense that there exists a surjective operator mapping one frame onto the other. To overcome this issue and thereby extend the family of rebricking operators, we first examine the structure of frames more closely by introducing and analyzing a partial order in the set of frames.

\begin{defn}
Let $X$ be a separable, real or complex Hilbert space. 
We denote 
\begin{enumerate}[label=(\roman*)]
	\item the set of surjective, linear, and continuous operators mapping $X$ onto $X$ by $\sur(X)$, and
	\item the set of frames on $X$ by $\fr(X)$.
\end{enumerate}
We introduce a partial order on $\sur(X)$ by defining for $A,B\in\sur(X)$ that
$$A\le B \quad\Leftrightarrow\quad \exists\ T\in\sur(X):\ A=TB.$$
Similarly, we introduce a partial order on $\fr(X)$ by defining for two frames $F=\cb{f_n:n\in\N}$, $G=\cb{g_n:n\in\N}\in\fr(X)$ that
$$F\le G \quad\Leftrightarrow\quad \exists\ T\in\sur(X):\ F=TG.$$
If $A\le B$, we say that $A$ is smaller than $B$ and similarly for frames.

The two partial orders naturally allow for two corresponding equivalence relations by defining for $A,B\in \sur(X)$ that
$$A\sim B \quad\Leftrightarrow\quad (A\le B)\land (B\le A)$$
and for $F,G\in\fr(X)$ that
$$F\sim G\quad\Leftrightarrow\quad (F\le G)\land (G\le F).$$
\end{defn}

Our next goal is to analyze the structure of the \FH{tree} that arises from this partial order and especially to identify the equivalence classes.

For the partial order on $\sur(X)$, a natural starting point is to consider whether the respective operators are injective, since the equivalence class of largest operators with respect to the partial order is the set of bijective operators. To this end, we use the dimension of an operator's kernel as a measure for its non-injectivity. The next lemma shows a basic property of the kernel dimension on $\sur(X)$.

\begin{lemma}\label{dimker}
Let $A\in\mathcal{L}(X)$, $B\in\sur(X)$. Then,
$$\ker(AB) = \ker B \oplus \rb{B\big|_{\rb{\ker B}^\perp}}^{-1} \ker A$$
and
$$\dim(\ker(AB))=\dim(\ker A)+\dim(\ker B),$$
where $\rb{\ker B}^\perp$ denotes the orthogonal complement of $\ker B$ within $X$.
\end{lemma}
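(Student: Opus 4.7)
The plan is to prove the set equality first and then read off the dimension statement from the decomposition.

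First I would record the necessary preparatory facts about $B$. Since $B\in\sur(X)$ is bounded and linear, $\ker B$ is a closed subspace of $X$, so the orthogonal decomposition $X=\ker B\oplus(\ker B)^\perp$ is available. The restriction $B_0:=B\big|_{(\ker B)^\perp}$ is injective (by construction) and surjective (since every $y\in X$ has a preimage $x=x_1+x_2\in\ker B\oplus(\ker B)^\perp$ under $B$, and $Bx_2=y$). Hence $B_0:(\ker B)^\perp\to X$ is a bijection, and in particular $B_0^{-1}(\ker A)$ is well-defined as a subspace of $(\ker B)^\perp$.

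Next I would prove the set equality $\ker(AB)=\ker B\oplus B_0^{-1}\ker A$. For the inclusion $\supseteq$, pick $x=x_1+x_2$ with $x_1\in\ker B$ and $x_2\in B_0^{-1}\ker A$; then $ABx=A(Bx_1+B_0x_2)=A(0+B_0x_2)=0$ since $B_0x_2\in\ker A$. For $\subseteq$, given $x\in\ker(AB)$, decompose $x=x_1+x_2$ with $x_1\in\ker B$, $x_2\in(\ker B)^\perp$; then $0=ABx=A B_0 x_2$, so $B_0 x_2\in\ker A$, i.e.\ $x_2\in B_0^{-1}\ker A$. The sum is direct because $\ker B\cap(\ker B)^\perp=\{0\}$. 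This establishes the first equation.

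For the dimension formula, I would use that $B_0$ restricts to a linear bijection between $B_0^{-1}\ker A\subseteq(\ker B)^\perp$ and $\ker A\subseteq X$, so $\dim(B_0^{-1}\ker A)=\dim(\ker A)$. Combined with the direct sum decomposition just proved, this yields
\[
\dim(\ker(AB))=\dim(\ker B)+\dim(B_0^{-1}\ker A)=\dim(\ker B)+\dim(\ker A).
\]
I do not expect any serious obstacle here; the only point requiring a moment of care is verifying that $B_0$ is a true bijection onto $X$ (which uses both the surjectivity of $B$ and the fact that $\ker B$ is closed so that the orthogonal complement is meaningful), and then ensuring that the dimension equality is interpreted as cardinal arithmetic in case one or both of $\dim(\ker A)$, $\dim(\ker B)$ are infinite.
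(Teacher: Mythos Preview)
Your proposal is correct and follows essentially the same approach as the paper: both arguments first verify that $\tilde B:=B\big|_{(\ker B)^\perp}$ is a bijection onto $X$, then establish the direct sum decomposition of $\ker(AB)$ (the paper phrases this as showing $(\ker B)^\perp\cap\ker(AB)=\tilde B^{-1}\ker A$, which is exactly your two-inclusion argument), and finally read off the dimension identity from the bijectivity of $\tilde B$. Your write-up is in fact a bit more explicit about the preparatory step that $\ker B$ is closed and about the cardinal-arithmetic interpretation in the infinite-dimensional case.
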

\begin{proof}
Clearly, $\ker(B) \subset \ker(AB)$. To show the first of the two equations, we prove that
$$\rb{\ker B}^\perp\cap\ker(AB)=\rb{B\big|_{\rb{\ker B}^\perp}}^{-1} \ker A.$$
As $B$ is surjective,
$$B\rb{\rb{\ker B}^\perp\cap\ker(AB)}=\cb{Bx\ |\ x\in \rb{\ker B}^\perp\land ABx=0}=\ker A.$$
A quick examination reveals that the restricted operator 
$$\tilde B :=B\big|_{\rb{\ker B}^\perp}:\rb{\ker B}^\perp\to X$$ is bijective, as $\ker\big(\tilde B\big)=\{0\}$ and $B\rb{\rb{\ker B}^\perp}=X$. Therefore, we obtain
$$\rb{\ker B}^\perp\cap\ker(AB)=\tilde B^{-1} \ker A.$$
Since $\tilde B^{-1} \ker A\subset \rb{\ker B}^\perp$, we can conclude that $\ker B\cap\tilde B^{-1} \ker A=\{0\}$. So, their dimensions simply add up, i.e.
\begin{align*}
\dim(\ker(AB)) &=\dim(\ker B)+\dim\rb{\tilde B^{-1}\ker A}\\
&=\dim(\ker A)+\dim(\ker B).
\end{align*}


\end{proof}

This immediately sheds some light on the structure of the two partial orders for surjective operators and frames, respectively.

\begin{proposition}\label{bijeq}
Let $A,B\in\sur(X)$. Then,
$$A\sim B \quad\Leftrightarrow\quad \exists\ T:X\to X\text{ bijective }: A=TB.$$
Similarly, for $F,G\in\fr(X)$. Then,
$$F\sim G \quad\Leftrightarrow\quad \exists\ T:X\to X\text{ bijective }: F=TG.$$
\end{proposition}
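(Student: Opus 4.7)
The backward direction is immediate in both the operator and the frame case: every bijective continuous linear operator on $X$ lies in $\sur(X)$, and its inverse is again a bijective (hence surjective) continuous linear operator, so $A = TB$ and $B = T^{-1}A$ witness $A \le B$ and $B \le A$, and similarly for frames.

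For the forward direction in the operator case, I would start from the definition of $\sim$: $A \sim B$ yields $T_1, T_2 \in \sur(X)$ with $A = T_1 B$ and $B = T_2 A$. Substituting one relation into the other gives
\[
A = T_1 T_2 A \quad \text{and} \quad B = T_2 T_1 B.
\]
Now I would exploit the surjectivity of $A$ and $B$: given any $y \in X$, write $y = Ax$, then $T_1 T_2 y = T_1 T_2 A x = A x = y$, so $T_1 T_2 = \id$ on $X$. The analogous argument with $B$ shows $T_2 T_1 = \id$. Hence $T_1$ is bijective with $T_1^{-1} = T_2$, and $A = T_1 B$ is the required representation with $T := T_1$.

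For the forward direction in the frame case, I would run the same substitution argument but use the defining property of frames in place of surjectivity of $A$, $B$. From $F = T_1 G$ and $G = T_2 F$ I get $g_n = T_2 T_1 g_n$ and $f_n = T_1 T_2 f_n$ for every $n\in\N$. A frame spans a dense subspace of $X$ (the lower frame bound forces $f \perp \{g_n : n\in\N\} \Rightarrow f = 0$), and $T_2 T_1$, $T_1 T_2$ are bounded linear operators; agreeing with $\id$ on a dense subset, they equal $\id$ on all of $X$. Hence $T_1$ is again bijective with $T_1^{-1} = T_2$, and $F = T_1 G$ delivers the claim.

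The only delicate point to watch is that one cannot simply invoke Lemma~\ref{dimker} to cancel kernel dimensions (which may be infinite); the substitution-and-density argument above sidesteps this entirely. I expect this to be the main conceptual hurdle a reader might stumble over, so I would make explicit in the writeup why the composition identities $T_1T_2 = \id$ and $T_2T_1 = \id$ hold globally rather than only on the image of $A$, $B$ (respectively, the index set of the frame).
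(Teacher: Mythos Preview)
Your argument is correct and takes a genuinely different route from the paper. The paper proves the forward direction by invoking Lemma~\ref{dimker}: from $A=T_1B$ and $B=T_2A$ it obtains
\[
\dim(\ker A)=\dim(\ker T_1)+\dim(\ker B),\qquad \dim(\ker B)=\dim(\ker T_2)+\dim(\ker A),
\]
and cancels to get $\dim(\ker T_1)=\dim(\ker T_2)=0$, hence $T_1$ bijective; the frame case is then deduced from the operator case. Your substitution argument $T_1T_2A=A$, $T_2T_1B=B$ together with surjectivity of $A,B$ (resp.\ density of the frame span) is more elementary and, as you rightly flag, sidesteps the issue that the dimension cancellation in the paper's proof is not valid when $\dim(\ker A)=\dim(\ker B)=\infty$. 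So your approach is not only different but actually more robust in the infinite-excess regime; the paper's route, on the other hand, ties the result directly into the kernel-dimension bookkeeping that drives the surrounding structural analysis of $\sur(X)$ and $\fr(X)$.
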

\begin{proof} We first prove the result for $\sur(X)$. \\
``$\Rightarrow$'': If $A\sim B$, there exist $S,T\in\sur(X)$ such that
$$A=TB\quad\text{and}\quad B=SA.$$
By Lemma \ref{dimker}, we obtain
$$\dim(\ker A) = \dim(\ker T)+\dim(\ker B)\quad\text{and}\quad \dim(\ker B) = \dim(\ker S)+\dim(\ker A).$$
Therefore, $\dim(\ker S) = \dim(\ker T) = 0$ and so $A=TB$ for $T$ bijective.

``$\Leftarrow$'': If $A=TB$, we have by definition that $A\le B$. Moreover, $B=T^{-1}A$ and therefore $B\le A$. Consequently, $A\sim B$.

The result for frames can be immediately deduced from the result for surjective operators.
\end{proof}

In order to show how the kernel of a surjective operator translates into the set of frames, we need to introduce new terminology.

\begin{defn}[Kernel of a frame]
Let $F=\cb{f_n: n\in\N}$ be a frame. Then,
$$\ker(F) := \cb{c\in\ell^2(\N): \sum_{n\in\N}c_nf_n=0}.$$
\end{defn}

Note that this is a short notation for the kernel of the frame's synthesis operator.

The next lemma now shows a connection between the kernel of a surjective operator and a frame.

\begin{lemma}\label{dimker2}
Let $A\in\sur(X)$ and let $R=\cb{r_n:n\in\N}$ be a Riesz basis of $X$. Then, $\ker A$ is isomorphic to $\ker(AR)$. In particular this implies that
$$\dim(\ker A)=\dim(\ker(AR)).$$
\end{lemma}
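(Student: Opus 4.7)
The plan is to realize $\ker(AR)$ explicitly as the image of $\ker A$ under the inverse of the synthesis operator of $R$, and conclude that the restriction of this inverse provides the claimed isomorphism.

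First, I would introduce the synthesis operator of the Riesz basis $R=\{r_n:n\in\N\}$, namely
\[T_R:\ell^2(\N)\to X,\qquad T_R c := \sum_{n\in\N} c_n r_n.\]
By the characterization of Riesz bases (see \cite[Chapter 3.6]{Christensen2003}), $T_R$ is a bounded linear bijection with bounded inverse, i.e.\ a topological isomorphism between $\ell^2(\N)$ and $X$.

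Next, the synthesis operator of the frame $AR=\{Ar_n:n\in\N\}$ is simply the composition $T_{AR}=A\circ T_R$. Indeed, for $c\in\ell^2(\N)$,
\[T_{AR}c=\sum_{n\in\N}c_n\,A r_n = A\Bigl(\sum_{n\in\N}c_n r_n\Bigr)=A T_R c,\]
where interchanging $A$ with the sum is justified by continuity of $A$. Hence, directly from the definition of the kernel of a frame,
\[\ker(AR)=\ker(AT_R)=\{c\in\ell^2(\N)\ :\ T_R c\in\ker A\}=T_R^{-1}(\ker A).\]

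Finally, since $T_R:\ell^2(\N)\to X$ is a bijection, its restriction
\[T_R\big|_{\ker(AR)}:\ker(AR)\to\ker A\]
is a (bicontinuous) linear isomorphism, with inverse given by $T_R^{-1}|_{\ker A}$. This yields the desired isomorphism $\ker A\cong\ker(AR)$ and, in particular, the equality of dimensions. There is no real obstacle in this argument; the only subtlety is to verify that the synthesis operator factorization $T_{AR}=A\circ T_R$ holds, which reduces to the continuity of $A$, and to invoke that the synthesis operator of a Riesz basis is a topological isomorphism so that $T_R$ may be inverted globally.
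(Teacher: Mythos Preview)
Your proof is correct and takes essentially the same approach as the paper. Both arguments rest on the fact that the synthesis operator $T_R$ of a Riesz basis is a topological isomorphism and that $T_{AR}=A\circ T_R$, so $\ker(AR)=T_R^{-1}(\ker A)$; the paper carries this out elementwise by writing each $x\in\ker A$ in its unique Riesz expansion and checking the correspondence in both directions, while you phrase the same bijection more concisely in operator language.
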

\begin{proof}
Since $A$ is a surjective operator, $F:=AR=\cb{f_n:n\in\N}$ constitutes a frame. For every $x\in\ker A$ there exists a unique representation 
$$x=\sum_{n\in\N} c_nr_n,$$
as $R$ is a Riesz basis. Since $A$ is linear and continuous, we get for $x\in\ker A$ that
$$0=Ax = \sum_{n\in\N}c_nAr_n = \sum_{n\in\N}c_nf_n.$$
This relation works in the other direction, as well. If $\sum_{n\in\N}c_nf_n=0$, we obtain
$$0=\sum_{n\in\N}c_nf_n=\sum_{n\in\N}c_nAr_n=A\sum_{n\in\N}c_n r_n.$$
Therefore, $\sum_{n\in\N}c_nf_n\in\ker A$. Hence, $\ker A$ is isomorphic to $\ker F$ and so
$$\dim(\ker A) = \dim(\ker F).$$
\end{proof}

The following theorem now shows an equivalent characterization of the partial order between two surjective operators or frames, respectively.

\begin{theorem}\label{partord}
Let $A,B\in\sur(X)$. Then,
$$A\le B\quad\Leftrightarrow\quad \ker A \supset \ker B.$$
Similarly, for $F,G\in\fr(X)$,
$$F\le G\quad\Leftrightarrow\quad \ker F \supset \ker G.$$
\end{theorem}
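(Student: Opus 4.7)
The plan is to establish the operator version first and then transfer it to frames via a fixed Riesz basis. The direction $A\le B\Rightarrow \ker A\supset\ker B$ is immediate: if $A=TB$ with $T\in\sur(X)$, then $Bx=0$ forces $Ax=TBx=0$.

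For the converse, assuming $\ker A\supset\ker B$, I would construct the required $T\in\sur(X)$ as $T:=A\tilde B^{-1}$, where $\tilde B:=B|_{(\ker B)^\perp}\colon(\ker B)^\perp\to X$. Since $B$ is bounded and surjective with closed kernel, $\tilde B$ is a bounded linear bijection between Hilbert spaces; the Open Mapping Theorem then guarantees that $\tilde B^{-1}$ is bounded, so $T\in\mathcal{L}(X)$. Verifying $A=TB$ is a short decomposition argument: for $x=x_0+x_1$ with $x_0\in\ker B$ and $x_1\in(\ker B)^\perp$, one has $Bx=\tilde B x_1$, hence $\tilde B^{-1}Bx=x_1$, while the hypothesis $\ker B\subset\ker A$ gives $Ax_0=0$, so $TBx=Ax_1=Ax$. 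Surjectivity of $T$ follows from $T(X)=A((\ker B)^\perp)=A(X)=X$, where the middle equality again uses $A|_{\ker B}=0$.

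To transfer the result to frames, I would fix a Riesz basis $R=\{r_n:n\in\N\}$ of $X$ and write $F=AR$, $G=BR$ for some $A,B\in\sur(X)$; such a representation exists by Theorem \ref{thm:frame_characterization}, applied first with an orthonormal basis and then composed with the Riesz-basis transformation. On the one hand, $F\le G$ unwinds to the existence of $T\in\sur(X)$ with $f_n=Tg_n$ for all $n$, i.e., $Ar_n=TBr_n$ on a set whose span is dense, which by continuity is equivalent to $A=TB$, i.e., $A\le B$. On the other hand, by the argument inside the proof of Lemma \ref{dimker2}, the isomorphism $c\mapsto\sum_n c_n r_n$ from $\ell^2(\N)$ to $X$ identifies $\ker F$ with $\ker A$ and $\ker G$ with $\ker B$, so $\ker F\supset\ker G$ is equivalent to $\ker A\supset\ker B$, and the frame statement follows from the operator statement.

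The main obstacle I anticipate is the construction of $T$ in the converse direction of the operator statement: in particular, arguing that $T$ inherits both boundedness (via the Open Mapping Theorem on $\tilde B$) and surjectivity (via the fact that $A$ annihilates $\ker B$, which is precisely where the kernel hypothesis is used). Once this operator-level witness is in hand, the frame case reduces to a translation exercise through the fixed Riesz basis $R$.
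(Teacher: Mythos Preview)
Your proposal is correct and follows essentially the same approach as the paper: both construct $T:=A\tilde B^{-1}$ with $\tilde B:=B|_{(\ker B)^\perp}$, verify $TB=A$ via the orthogonal decomposition $x=x_0+x_1$ along $\ker B$, and deduce the frame statement from the operator statement. Your write-up is slightly more explicit than the paper's (you invoke the Open Mapping Theorem for boundedness of $\tilde B^{-1}$ and spell out the Riesz-basis transfer using Lemma~\ref{dimker2}, where the paper just says the frame case ``can be immediately deduced''), but the substance is identical.
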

\begin{proof} We prove the result for surjective operators. \\
``$\Rightarrow$'': By definition $A\le B$, if and only if there exists $T\in\sur(X)$ such that $A=TB$. Therefore,
$$\ker A = \ker(TB) = \cb{x\in X\mid TBx=0} \supset \cb{x\in X\mid Bx=0} = \ker B.$$
``$\Leftarrow$'': Let $V:=\ker B$. As in the proof of Lemma \ref{dimker}, the operator
$$\tilde B:= B\big|_{\rb{\ker B}^\perp}=B\big|_{V^\perp}$$
is bijective. We prove $A\le B$ by constructing an operator $T\in\sur(X)$ such that $A=TB$. Our choice is $T=A\tilde B^{-1}$ which is surjective, as $\tilde B^{-1}:X\to(\ker B)^\perp$ and $A:(\ker B)^\perp\to X$ are surjective. Due to $TB=A\tilde B^{-1}B$, we first examine the operator $\tilde B^{-1}B$. To this end we make use of the unique orthogonal decomposition $x = x_V+x_{V^\perp}$ for every $x \in X$ where $x_V \in V$ and $x_{V^\perp} \in V^\perp$. We obtain that for all $x\in X$,
$$\tilde B^{-1} B x = \tilde B^{-1} B \rb{x_V+x_{V^\perp}} = \tilde B^{-1} Bx_{V^\perp} = \tilde B^{-1} B\big|_{V^\perp} x_{V^\perp} = x_{V^\perp}.$$
Therefore, $\tilde B^{-1}B=P_{V^\perp}$, where $P_{V^\perp}$ is the orthogonal projector onto $V^\perp$. Now, for all $x\in X$,
$$A\tilde B^{-1} B x = A P_{V^\perp}x= A x_{V^\perp}.$$
As $V=\ker B\subset\ker A$, we get that $x_V\in\ker A$. Therefore, for all $x\in X$,
$$A\tilde B^{-1} B x = A x_{V^\perp} = A\rb{x_{V^\perp}+x_V}=Ax.$$
Hence, $A=TB$ and $A\le B$. \\[3mm]
The result for frames can be immediately deduced from the result for surjective operators.
\end{proof}

As a corollary of this result we obtain a characterization for two surjective operators or frame, respectively, belonging to the same equivalence class.

\begin{corollary}\label{equivcl}
Let $A,B\in\sur(X)$. Then,
$$A\sim B\quad\Leftrightarrow\quad \ker A = \ker B.$$
Similarly, for $F,G\in\fr(X)$,
$$F\sim G\quad\Leftrightarrow\quad \ker F = \ker G.$$
\end{corollary}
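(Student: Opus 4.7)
The plan is to derive this corollary as a direct consequence of Theorem \ref{partord} together with the definition of the equivalence relation $\sim$. Since $\sim$ was defined symmetrically via $A \sim B \Leftrightarrow (A \le B) \land (B \le A)$, and Theorem \ref{partord} characterizes each of the two inequalities as a kernel inclusion, one merely needs to combine the two inclusions into an equality.

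Concretely, I would proceed as follows. First, unfold the definition: $A \sim B$ holds if and only if both $A \le B$ and $B \le A$. Next, apply Theorem \ref{partord} to each inequality separately, obtaining $A \le B \Leftrightarrow \ker A \supset \ker B$ and $B \le A \Leftrightarrow \ker B \supset \ker A$. Finally, observe that the conjunction of $\ker A \supset \ker B$ and $\ker B \supset \ker A$ is equivalent to $\ker A = \ker B$, which yields the desired equivalence for surjective operators.

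For the frame statement, I would argue identically: the partial order on $\fr(X)$ is defined by the same schema as on $\sur(X)$, and Theorem \ref{partord} also provides the kernel characterization for frames, so the chain of equivalences $F \sim G \Leftrightarrow (F \le G) \land (G \le F) \Leftrightarrow (\ker F \supset \ker G) \land (\ker G \supset \ker F) \Leftrightarrow \ker F = \ker G$ goes through verbatim.

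There is essentially no obstacle here: the entire content of the corollary has already been packaged into Theorem \ref{partord}, and the corollary is a purely formal rewriting step. The only thing worth being careful about is making sure the symmetry of the argument is visible, namely that both inclusions are obtained from the same theorem applied to the two orderings, so that no additional hypothesis on $A$, $B$, $F$, or $G$ sneaks in beyond what was already required for Theorem \ref{partord}.
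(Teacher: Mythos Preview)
Your proposal is correct and matches the paper's approach exactly: the paper's proof consists of the single line ``Direct application of Theorem \ref{partord},'' and your write-up simply unfolds what that direct application means.
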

\begin{proof} Direct application of Theorem \ref{partord}.
\end{proof}

For convenience, we now introduce a notation for spaces of surjective operators and frames respectively, with a certain kernel dimension.

\begin{defn}
For $k\in\N_0\cup\{\infty\}$, let 
$$\sur_k(X) := \cb{A\in\sur (X):\ \dim(\ker(A))=k}$$
and
$$\fr_k(X) := \cb{F\in\fr (X):\ \dim(\ker(F))=k}.$$
\end{defn}

To give examples for such sets, $\sur_0(X)$ is the set of all bijective, linear, bounded operators mapping $X$ onto $X$ and $\fr_0(x)$ constitutes the set of all Riesz bases in $X$.

We will now use the previous results on the inherent structure of the set of frames to prove a new result on the rebrickability of frames. To this end, we first show that every surjective operator can be transferred into a surjective operator of higher or equal kernel dimension by multiplying with a surjective operator from the right hand side.

\begin{lemma}\label{surtrans}
Let $k\in\N_0\cup\{\infty\}$, $\ell\in\N_0$, $k\ge\ell$, such that $A\in\sur_k(X)$, $B\in\sur_\ell(X)$. Then there exists $T\in \sur_{k-\ell}(X)$ such that $A=BT$.
\end{lemma}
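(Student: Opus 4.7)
The plan is to construct the required operator $T$ explicitly as a sum of two pieces: one piece that guarantees the equation $BT = A$ and is "right" on $(\ker B)^\perp$, plus a correction that puts the needed image into $\ker B$ to achieve surjectivity.

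First I would set $\tilde B := B|_{(\ker B)^\perp}$. Since $\ker B$ has finite dimension $\ell$, it is closed, so $X = \ker B \oplus (\ker B)^\perp$ is a valid orthogonal decomposition. As $B$ is bounded and surjective, $\tilde B : (\ker B)^\perp \to X$ is a bounded linear bijection, and its inverse $\tilde B^{-1} : X \to (\ker B)^\perp$ is bounded by the bounded inverse theorem (cf. the proof of Theorem \ref{partord}). Define
\[T_0 := \tilde B^{-1} A \in \mathcal{L}(X).\]
A direct computation gives $B T_0 = \tilde B\, \tilde B^{-1} A = A$, while $T_0(X) = \tilde B^{-1}(A(X)) = \tilde B^{-1}(X) = (\ker B)^\perp$ and $\ker T_0 = \ker A$.

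Next I would add a correction $S$ taking values in $\ker B$, so that $BS = 0$ and hence $B(T_0+S) = A$ is preserved. Using the hypothesis $k \ge \ell$, I pick an $\ell$-dimensional subspace $W \subset \ker A$ together with a linear isomorphism $L : W \to \ker B$ (possible because both spaces have the same finite dimension $\ell$), and set
\[S := L \circ P_W,\]
where $P_W$ denotes the orthogonal projection onto the closed, finite-dimensional subspace $W$. Then $S \in \mathcal{L}(X)$, with range $\ker B$ and kernel $W^\perp$. Finally, put $T := T_0 + S$.

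It remains to verify that $T \in \sur_{k-\ell}(X)$ and $A = BT$. The identity $BT = BT_0 + BS = A + 0 = A$ is immediate. For surjectivity, given $y \in X$, decompose $y = y_1 + y_2$ with $y_1 \in (\ker B)^\perp$ and $y_2 \in \ker B$; pick $x_0 \in X$ with $T_0 x_0 = y_1$, and note that on the affine fibre $x_0 + \ker A$ the operator $T_0$ is constant with value $y_1$, while $S$ takes the values $Sx_0 + S(\ker A) \supset Sx_0 + L(W) = Sx_0 + \ker B$, so $y_2 \in S(x_0 + \ker A)$ and hence $y \in T(X)$. For the kernel, if $Tx = 0$ then $T_0 x = -Sx$, but $T_0 x \in (\ker B)^\perp$ and $Sx \in \ker B$ are in orthogonal subspaces, forcing $T_0 x = 0$ and $Sx = 0$. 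Thus
\[\ker T = \ker T_0 \cap \ker S = \ker A \cap W^\perp,\]
which, because $W$ is an $\ell$-dimensional subspace of the closed subspace $\ker A$, has dimension $k - \ell$ (interpreted as $\infty$ if $k=\infty$).

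The main subtlety is not the equation $BT=A$ (which $T_0$ alone solves), but arranging surjectivity of $T$ while simultaneously controlling $\dim \ker T$; this is exactly where the hypothesis $k \ge \ell$ is used, since it is precisely what allows $\ker B$ to be faithfully embedded inside $\ker A$ via $L$ and the correction $S$.
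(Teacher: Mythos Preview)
Your proof is correct and follows the same core strategy as the paper: set $T_0 = \tilde B^{-1}A$ with $\tilde B = B|_{(\ker B)^\perp}$ and then add a finite-rank correction with range $\ker B$ so that the sum becomes surjective while $BT = A$ is preserved.

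The one genuine difference is in how you handle the general case $k\ge \ell$. The paper first treats the special case $k=\ell$, taking the correction to be $SP_{\ker A}$ with $S$ a \emph{unitary} operator on $X$ satisfying $S(\ker A)=\ker B$ (which forces $\dim\ker A=\dim\ker B$), and then reduces the general case to this one by picking an auxiliary $C\in\sur_{k-\ell}(X)$, noting $BC\in\sur_k(X)$, and applying the equal-dimension case to $A$ and $BC$. You instead choose from the outset an $\ell$-dimensional subspace $W\subset\ker A$ and an isomorphism $L:W\to\ker B$, taking the correction $LP_W$. This handles all $k\ge\ell$ (including $k=\infty$) in a single stroke and makes the kernel computation $\ker T=\ker A\cap W^\perp$ immediate, at the cost of losing the slight conceptual cleanliness of the paper's unitary $S$. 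Both arguments hinge on exactly the same decomposition and the same use of the hypothesis $k\ge\ell$; yours is simply the more direct packaging.
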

\begin{proof}
We first prove this result for $k=\ell$. 

As in the proof of Lemma \ref{dimker}, the operator
$$\tilde B:= B\big|_{\rb{\ker B}^\perp}=B\big|_{V^\perp}$$
is bijective. Let $S$ be a unitary operator such that
$$S(\ker A)=\ker B$$ 
and let $P_{\ker A}$ denote the orthogonal projection onto $\ker A$. We define 
$$T := \tilde B^{-1}A + SP_{\ker A}$$ 
and prove that this operator is bijective and satisfies $A=BT$.

For the bijectivity of $T$: We can decompose each $x\in X$ into a component in $\ker A$ and a component in $\rb{\ker A}^\perp$, i.e.
$$x=x_{\ker A}+x_{\rb{\ker A}^\perp}.$$
Applying $T$ to $x$, we obtain
$$Tx=\tilde B^{-1}Ax+SP_{\ker A}x=\tilde B^{-1}Ax_{\rb{\ker A}^\perp}+SP_{\ker A}x_{\ker A}.$$
Now let $y\in X$ be arbitrary. We can decompose $y$ into a component in $\ker B$ and a component in $\rb{\ker B}^\perp$, i.e.
$$y=y_{\ker B}+y_{\rb{\ker B}^\perp}.$$

Since $\tilde B^{-1}A:X\to\rb{\ker B}^\perp$ and $SP_{\ker A}:X\to\ker B$, the equation $Tx=y$ can be split into
\begin{align*}
y_{\ker B} &= SP_{\ker A}x_{\ker A}, \\
y_{\rb{\ker B}^\perp} &= \tilde B^{-1} Ax_{\rb{\ker A}^\perp}.
\end{align*}
As $\tilde B^{-1}A:\rb{\ker A}^\perp\to\rb{\ker B}^\perp$ and $SP_{\ker A}:\ker A \to \ker B$ are bijective, both upper equations can be solved uniquely with $x_{\ker A}\in\ker A$ and $x_{\rb{\ker A}^\perp}\in \rb{\ker A}^\perp$ for every $y\in X$. Hence, $T$ is bijective.

We show $A=BT$: We have
$$BT = \underbrace{B\tilde B^{-1}}_{\normalsize{=\id}}A + BSP_{\ker A} = A + BSP_{\ker A}.$$
$BSP_{\ker A}=0$, since $\ran\rb{SP_{\ker A}}=\ker B$. Therefore, $BT=A$.

Now let $k\ge \ell$. Due to Lemma \ref{dimker}, for every $C\in\sur_{k-\ell}(X)$, $BC\in\sur_k(X)$. Now we can apply the above result for $k=\ell$ to obtain that there exists a bijective operator $\tilde T\in \mathcal{L}(X)$ such that $A=BC\tilde T$. Choosing $T=C\tilde T$ finishes the proof.
\end{proof}

\begin{lemma}\label{opprod}
Let $A\in\sur(X)$ and let $T\in\mathcal{L}(\Hi)$. Then, the following two statements are equivalent:
\begin{enumerate}[label=(\roman*)]
\item $A T \in\sur (X)$.
\item $\ran T+\ker A= X$.
\end{enumerate}

\end{lemma}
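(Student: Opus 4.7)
The plan is to reduce the equivalence to the single identity $AT(X) = A(\ran T)$ and then chase the meaning of surjectivity through the decomposition $\ran T + \ker A$. Throughout, I will use only the surjectivity of $A$, which is given, plus elementary linearity.

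For the direction (ii) $\Rightarrow$ (i), I would argue as follows. Assume $\ran T + \ker A = X$. Apply $A$ to both sides and use linearity:
\[
A(X) = A(\ran T + \ker A) = A(\ran T) + A(\ker A) = A(\ran T) + \{0\} = A(\ran T).
\]
Since $A$ is surjective, $A(X) = X$, hence $A(\ran T) = X$. But $A(\ran T) = \{A(Tx) : x \in X\} = (AT)(X)$, so $AT$ is surjective. Boundedness and linearity of $AT$ follow immediately from the corresponding properties of $A$ and $T$, so $AT \in \sur(X)$.

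For the direction (i) $\Rightarrow$ (ii), suppose $AT \in \sur(X)$, i.e.\ $A(\ran T) = X$. Take any $z \in X$. Then $Az \in X = A(\ran T)$, so there exists $u \in \ran T$ with $Au = Az$. This gives $A(z - u) = 0$, so $v := z - u \in \ker A$. Consequently $z = u + v \in \ran T + \ker A$. As $z$ was arbitrary, $\ran T + \ker A = X$.

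There is no genuine obstacle here; the argument is purely set-theoretic once one observes the identity $(AT)(X) = A(\ran T)$. The only point worth highlighting is that the surjectivity of $A$ is used in the implication (ii) $\Rightarrow$ (i) to turn $A(X) = X$ into the desired conclusion, whereas the reverse implication does not actually require $A$ to be surjective and uses only the inclusion $A(\ran T) \subset X$ combined with the equality $A(\ran T) = X$.
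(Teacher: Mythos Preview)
Your proof is correct and essentially identical to the paper's: both directions hinge on the identity $\ran(AT)=A(\ran T)$ together with $A(\ran T+\ker A)=A(\ran T)$, and the argument for (i) $\Rightarrow$ (ii) is the same element-chasing (pick $z$, find $u\in\ran T$ with $Au=Az$, so $z-u\in\ker A$). Your closing remark that only the direction (ii) $\Rightarrow$ (i) actually uses the surjectivity of $A$ is also accurate.
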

\begin{proof}
(i) $\Rightarrow$ (ii): \\[3mm]
Since $A$ and $AT$ are surjective, we obtain that for every $y\in X$, there exists $x\in X$ such that $ATx=Ay$. Therefore, $y\in\ran T+\ker A$. Since this holds for all $y\in X$, (ii) follows.
\\[3mm]
(ii) $\Rightarrow$ (i): \\[3mm]
$AT\in\sur(X)$, since
$$\ran(AT) = A\ran T = A\rb{\ran T + \ker A} = A(X) = X.$$
\end{proof}

\begin{theorem}\label{frrebrick}
Let $\Hi$ be a real, infinite-dimensional, separable Hilbert space and let $A,B,S\in\sur(X)$ such that $B=AS$. Then, the following two statements are equivalent:
\begin{enumerate}[label=(\roman*)]
\item $A + i B = A (\id + iS) \in\sur (\Hi+i\Hi)$.
\item $\ran(\id+iS)+\rb{\ker A+i\ker A}= \Hi+i\Hi$.
\end{enumerate}

\end{theorem}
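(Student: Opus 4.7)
The plan is to reduce the claim to a direct application of Lemma \ref{opprod} in the complexified Hilbert space $\Hi+i\Hi$. First I would make precise the complexification of the real operator $A$: define $A_\C:\Hi+i\Hi \to \Hi+i\Hi$ by $A_\C(x_r+ix_i) := Ax_r+iAx_i$. A short verification shows that $A_\C \in \mathcal{L}(\Hi+i\Hi)$, that $A_\C$ is surjective (given $y_r+iy_i$, pick preimages $x_r,x_i$ of $y_r,y_i$ under $A$), and that
$$\ker A_\C = \ker A + i\ker A,$$
since $Ax_r+iAx_i=0$ forces $Ax_r=0$ and $Ax_i=0$ by separating real and imaginary components. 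In particular $A_\C \in \sur(\Hi+i\Hi)$.

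Next I would note that $\id+iS:\Hi+i\Hi\to\Hi+i\Hi$ lies in $\mathcal{L}(\Hi+i\Hi)$, and that the operator $A(\id+iS)$ appearing in (i), read on $\Hi+i\Hi$, is precisely $A_\C\,(\id+iS)$. Thus, applying Lemma \ref{opprod} with the underlying space $X=\Hi+i\Hi$, the surjective operator $A_\C$ in place of $A$, and the bounded operator $\id+iS$ in place of $T$, we obtain the equivalence
$$A_\C(\id+iS)\in\sur(\Hi+i\Hi)\quad\Leftrightarrow\quad \ran(\id+iS)+\ker A_\C = \Hi+i\Hi.$$
Substituting $\ker A_\C = \ker A + i\ker A$ yields exactly the equivalence (i) $\Leftrightarrow$ (ii).

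The only nontrivial step is confirming the identification $\ker A_\C = \ker A + i\ker A$ and that $A_\C$ is surjective on the complexification; both are elementary but must be done cleanly because the hypothesis $B=AS$ is used only to guarantee that $A$ is what multiplies $\id+iS$ on the left (so the factorization $A+iB = A(\id+iS)$ is valid and $A\in\sur(X)$ is the correct object to feed into Lemma \ref{opprod}). Once these pieces are in place the theorem is immediate, and no additional spectral or topological input is required.
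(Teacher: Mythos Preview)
Your proposal is correct and follows exactly the paper's approach: the paper's proof is the single line ``Apply Lemma~\ref{opprod} with $T=\id+iS$,'' and your argument is precisely this application, with the added (but welcome) care of making the complexification $A_\C$ explicit and verifying $\ker A_\C = \ker A + i\ker A$ and $A_\C\in\sur(\Hi+i\Hi)$, which the paper leaves implicit.
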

\begin{proof}
Apply Lemma \ref{opprod} with $T=\id+iS$.
\end{proof}

In the previous theorem, $S$ is the rebricking operator and from $(ii)$ it immediately becomes apparent that $\id+i S$ does not have to be surjective for $S$ to function as a rebricking operator. This marks the main difference between Theorem \ref{rebrickapp} and Theorem \ref{frrebrick}.

To illustrate this, we will examine the following example.

\begin{example}
Let $S\in\mathcal{L}\rb{\ell^2(\N;\R)}$ with the matrix representation with respect to the canonical basis 
$$S=\begin{pmatrix}
0 & 1 & 0 & 0 & 0 & \cdots \\
-1 & 0 & 0 & 0 & 0 & \cdots \\
0 & 0 & 1 & 0 & 0 & \cdots \\
0 & 0 & 0 & 1 & 0 & \cdots\\
0 & 0 & 0 & 0 & 1 & \\
\vdots & \vdots & \vdots & \vdots &  & \ddots \\
\end{pmatrix}.
$$
This operator acts as a 90 degree rotation on the $e_1$-$e_2$-plane.
Clearly, $S\in\sur\rb{\ell^2(\N;\R)}$. Moreover, $\id+iS\notin\sur\rb{\ell^2(\N;\C)}$ as
$$\id+iS=\begin{pmatrix}
1 & i & 0 & 0 & 0 & \cdots \\
-i & 1 & 0 & 0 & 0 & \cdots \\
0 & 0 & 1+i & 0 & 0 & \cdots \\
0 & 0 & 0 & 1+i & 0 & \cdots\\
0 & 0 & 0 & 0 & 1+i & \\
\vdots & \vdots & \vdots & \vdots &  & \ddots \\
\end{pmatrix}
$$
and e.g. $e_1,e_2\notin\ran(\id+iS)$. Therefore, $S$ cannot be used for rebricking in the sense of Definition \ref{Def Rebrickability Frames}. However, due to Theorem \ref{frrebrick} it is sufficient to find a surjective operator $A\in\mathcal{L}\rb{\ell^2(\N;\R)}$ with \[\ker(A)\supset\Span\{e_1,e_2\},\] since \[\Span\{e_3,e_4,e_5,\ldots\}\subset\ran(\id+iS).\] A well-known example for a surjective operator with a nontrivial kernel is the left shift operator \[L\in\mathcal{L}\rb{\ell^2(\N;\R)},\, (Lx)_n=x_{n+1}.\] Its kernel is $\ker(L)=\Span\{e_1\}$. This gives us an example $A=L^2$ which is both surjective and satisfies $\ker(A)=\Span\{e_1,e_2\}$. We can use $A$ and $S$ for rebricking, as for all $x\in\ell^2(\N;\C)$
\begin{align*}
A(\id+iS)x &= A(x_1+ix_2,-ix_1+x_2,(1+i)x_3,(1+i)x_4,\ldots) \\
&= ((1+i)x_3,(1+i)x_4,\ldots).
\end{align*}
Therefore, $A(\id+iS)\in\sur(\ell^2(\N;\C))$. Note that $A = L$ would also work for rebricking with the given operator $S$.

\end{example}

\subsection{Parseval Frames}
The Parseval frames are the convenient class of frames where the frame operator is the identity. Therefore, 
the inversion of the frame operator is trivial and frame decomposition series has a simple form. This is exploited in  applications, see e.g. \cite{GittaShearlets,Balazs2017}

\begin{defn}[Parseval frame]
	Let $X$ be a real or complex separable Hilbert space. A frame $\cb{\phi_n: n \in\N}\subset X$ is called \emph{Parseval frame} if its upper and lower frame bounds $c$ and $C$ are both equal to 1.
\end{defn}

As first step towards a rebricking theory for Parseval frames we characterize Parseval frames $\{\phi_n: n \in \N\} \subset X$ in terms of operators $U : X \to X$ and orthonormal bases $\{e_n: n \in \N\} \subset X$ for $X$ such that $\phi_n = Ue_n$ for all $n \in \N$. 
\begin{lemma}
	\label{Lem:Characterization_parseval_frames}
	Let $\{e_n :\,n \in \N\} \subset X$ be any arbitrary orthonormal basis for $X$. The Parseval frames for $X$ are precisely the families $\{U e_n:\, n \in \N\}$, where $U: X \to X$ has an isometric adjoint $U^*$.
\end{lemma}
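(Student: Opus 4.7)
The plan is to reduce the Parseval condition to an identity about $\|U^*f\|$ by exploiting the orthonormal basis expansion, and then to invoke Theorem \ref{thm:frame_characterization} to handle the surjectivity/boundedness requirement automatically.

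First I would establish the key computation: for any bounded $U \in \mathcal{L}(X)$, any $f \in X$, and any orthonormal basis $\{e_n : n \in \N\}$ of $X$,
\[
\sum_{n \in \N} \abs{\langle f, Ue_n\rangle}^2 = \sum_{n \in \N} \abs{\langle U^*f, e_n\rangle}^2 = \norm{U^*f}^2,
\]
by Parseval's identity for the orthonormal basis $\{e_n\}$. Hence the Parseval frame condition $\sum_n \abs{\langle f, Ue_n\rangle}^2 = \norm{f}^2$ for all $f \in X$ is equivalent to $\norm{U^*f} = \norm{f}$ for all $f$, i.e.\ to $U^*$ being an isometry.

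For the direction ``$\Leftarrow$'', suppose $U^*$ is an isometry. Then $U^*$ is bounded with $\norm{U^*}=1$, so $U$ is bounded with $\norm{U}=1$. Moreover, $U^*$ being an isometry implies $\ker U^* = \{0\}$ and that $U^*$ is bounded below, so $\ran U$ is closed with $\overline{\ran U} = (\ker U^*)^\perp = X$; hence $U$ is surjective. Theorem \ref{thm:frame_characterization} yields that $\{Ue_n : n\in\N\}$ is a frame, and the computation above shows its bounds are $c=C=1$, so it is a Parseval frame.

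For the direction ``$\Rightarrow$'', let $\{\phi_n : n\in\N\}$ be a Parseval frame. Since every Parseval frame is a frame, Theorem \ref{thm:frame_characterization} supplies a bounded surjective $U \in \mathcal{L}(X)$ with $\phi_n = Ue_n$ for all $n$. Then for every $f\in X$,
\[
\norm{f}^2 = \sum_{n \in \N} \abs{\langle f, \phi_n\rangle}^2 = \norm{U^*f}^2,
\]
so $U^*$ is an isometry. The only mild subtlety, and the one place to be careful, is the separate verification of boundedness and surjectivity when starting from ``$U^*$ isometric''; once Theorem \ref{thm:frame_characterization} is in hand, the remainder is a direct application of Parseval's identity in the orthonormal basis $\{e_n\}$, so no real obstacle arises.
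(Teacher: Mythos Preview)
Your proof is correct and follows essentially the same route as the paper: both reduce the Parseval condition to $\norm{U^*f}=\norm{f}$ via the orthonormal expansion $\sum_n\abs{\langle f,Ue_n\rangle}^2=\sum_n\abs{\langle U^*f,e_n\rangle}^2=\norm{U^*f}^2$, and both invoke Theorem~\ref{thm:frame_characterization} for the ``$\Rightarrow$'' direction. The only difference is that in the ``$\Leftarrow$'' direction you take a small detour---first establishing surjectivity of $U$ to get a frame via Theorem~\ref{thm:frame_characterization}, then checking the bounds---whereas the paper simply notes that the same equation already gives $\sum_n\abs{\langle f,Ue_n\rangle}^2=\norm{f}^2$, which is the Parseval frame condition outright; the surjectivity verification is not needed.
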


\begin{proof}
	Let $\{\phi_n : n \in \N\}$ by a Parseval frame for $X$. Because Parseval frames are frames, there is a surjective, bounded operator $U : X \to X$ such that $\phi_n = Ue_n$ for all $n \in \N$. Thus we have \[\norm{f}^2 = \sum_{n \in \N} \abs{\langle f,Ue_n\rangle}^2 = \sum_{n \in \N} \abs{\langle U^*f,\phi_n\rangle}^2 = \norm{U^*f}^2\,\text{for all } f \in X.\] Hence $U^*$ is an isometry.
	
	The converse follows from the same equation.
\end{proof}

Based on this observation we can characterize the operators $A: X \to X$ mapping Parseval frames to Parseval frames.

\begin{lemma}\label{Pframeb}
	Let $\cb{\phi_n: n \in\N}$ be a Parseval frame for a real or complex separable Hilbert space $X$ and let $A \in \mathcal{L}(X)$. Then, the following two statements are equivalent:
	\begin{enumerate}[label=(\roman*)]
		\item $\cb{A\phi_n: n\in\N}$ is a Parseval frame in $X$.
		\item $A^*$ is an isometry.
	\end{enumerate}
\end{lemma}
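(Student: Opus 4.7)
The plan is to prove this via a single unconditional identity that trades the frame sum for the system $\{A\phi_n\}$ against the action of $A^*$. For arbitrary $f \in X$, the key computation is
\[\sum_{n\in\N}\abs{\langle f, A\phi_n\rangle}^2 = \sum_{n\in\N}\abs{\langle A^*f, \phi_n\rangle}^2 = \norm{A^*f}^2,\]
where the first equality is the adjoint relation and the second equality is the Parseval identity for $\{\phi_n\}$ applied to the vector $A^*f\in X$ (which lies in $X$ because $A$ is bounded). This identity holds without any further assumptions on $A$ and immediately delivers both directions of the equivalence.

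For (ii) $\Rightarrow$ (i): if $A^*$ is an isometry, the right-hand side equals $\norm{f}^2$ for every $f\in X$, so $\{A\phi_n: n\in\N\}$ satisfies the Parseval identity with upper and lower bound both equal to $1$, hence is a Parseval frame. For (i) $\Rightarrow$ (ii): if $\{A\phi_n\}$ is a Parseval frame, the left-hand side equals $\norm{f}^2$ for every $f\in X$, so $\norm{A^*f}=\norm{f}$ for all $f$, which is the isometry property of $A^*$.

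There is essentially no obstacle; the only subtlety worth flagging is that the middle equality really does require $\{\phi_n\}$ to be a Parseval frame on all of $X$, including on vectors of the form $A^*f$, which is automatic here. As an alternative, one could invoke Lemma \ref{Lem:Characterization_parseval_frames} to write $\phi_n = Ue_n$ with $U^*$ isometric, observe $A\phi_n = (AU)e_n$, and reduce the claim to the fact that $(AU)^* = U^*A^*$ is isometric iff $A^*$ is isometric (since $U^*$ already preserves norms). Both routes land on the same statement; the direct computation above is cleaner.
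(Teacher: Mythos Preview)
Your proof is correct and essentially identical to the paper's: both hinge on the single identity $\sum_{n}\abs{\langle f, A\phi_n\rangle}^2 = \sum_{n}\abs{\langle A^*f, \phi_n\rangle}^2 = \norm{A^*f}^2$ and read off each implication directly from it. The alternative route via Lemma \ref{Lem:Characterization_parseval_frames} that you sketch is not used in the paper but is also sound.
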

\begin{proof}
	$\cb{A\phi_n: n\in\N}$ is a Parseval frame in $X$ if and only if $\norm{f}^2 = \sum_{n \in \N} \abs{\langle f,A\phi_n\rangle}^2$ for all $f \in \Hi$. We compute
 \[\norm{f}^2 = \sum_{n \in \N} \abs{\langle f,A\phi_n\rangle}^2 = \sum_{n \in \N} \abs{\langle A^*f,\phi_n\rangle}^2 = \norm{A^*f}^2\quad \text{for all } f \in X,\] and conclude that $A^*$ is an isometry in $X$.
	
	Conversely, suppose $A^*$ is an isometry. Then \[\norm{f}^2 = \norm{A^*f}^2 = \sum_{n \in \N} \abs{\langle A^*f,\phi_n\rangle}^2 = \sum_{n \in \N} \abs{\langle f,A\phi_n\rangle}^2\quad \text{for all } f \in X.\] This implies (i). 
\end{proof}

Thus bounded linear operators $A: X \to X$ having an isometric adjoint $A^*$ are the canonical morphisms between Parseval frames. Unfortunately these operators $A$ are again no automorphisms and thus not every Parseval frame can be mapped onto every other Parseval frame. The following example will show an extreme case not allowing for creating a mapping in any direction.
\begin{example}[Incompatible Parseval frames]
	\label{ex:incomp_parseval_frames}
	Suppose $\{e_n : n \in \N\} \subset X$ is an orthonormal basis. This basis can be transformed in two Parseval frames by defining the Parseval frames 
	$$\phi_1 := \frac{1}{\sqrt{2}}e_1, \quad  \phi_2 := \frac{1}{\sqrt{2}}e_1, \quad \phi_{i+1} := e_i \quad \mbox{for all } i \geq 2,$$ 
	and 
	$$\psi_1 := e_1, \quad \psi_2 := \frac{1}{\sqrt{2}}e_2, \quad \psi_3 := \frac{1}{\sqrt{2}}e_2, \quad \psi_{i+1} := e_i \quad \mbox{for all } i \geq 3.$$
	$\{\phi_n : n \in \N\}$ is a Parseval frame for $X$ because
	\[\sum_{n \in \N} \abs{\langle f,\phi_n\rangle}^2 = 2\abs{\langle f,\frac{1}{\sqrt{2}}e_1\rangle}^2 + \sum_{n = 2}^\infty \abs{\langle f,e_n\rangle}^2 = \norm{f}^2 \quad \text{ for all } f \in X.\] 
	For a very similar reason $\{\psi_n : n \in \N\}$ is a Parseval frame for $X$. However, it is clearly impossible to define an elementwise mapping of the first Parseval frame onto the second  or vice-versa.
\end{example}

Hence the structure of the set of all Parseval frames for an Hilbert space $X$---even though it is more similar to orthonormal bases---is still too general frame-like for rebricking all pairs of Parseval frames.

\begin{example}
	\label{ex:nonrebrickable_parseval_frames}
	Let  $\{e_n : n \in \N\} \subset \Hi$ be an orthonormal basis for $\Hi$. We construct two Parseval frames by defining
	$$\phi_1 := 0, \quad  \phi_{i+1} := e_i \quad \mbox{for all }  i \geq 1,$$ 
	and 
	$$\psi_1 := e_1,\quad \psi_2 := 0,\quad \psi_{i+1} = e_i \quad \mbox{for all } i \geq 2. $$
	Again there is no mapping from one Parseval frame onto the other in any direction. 
	But their normalized sum $\{\frac{1}{\sqrt{2}}(\phi_n + i\psi_n) :\, n \in \N\}$ is a complex Parseval frame for $\Hi + i\Hi$.
\end{example}

Similar to the frame setting we keep the ansatz and focus on pairs of Parseval frames $\{ \phi_n :\, n \in \N\}, \{ \psi_n :\, n \in \N\} \subset \Hi$ such that there exists an operator $A\in \mathcal{L}(\Hi)$ having the property $\psi_n = A \phi_n$ for all $n \in \N$.

In order to get a complex Parseval frame a renormalization similar to the orthonormal basis setting is needed.  I.e. it is natural to use operators $B: = \frac 1{\sqrt 2} (\id+iA)$ where $A: \Hi \to \Hi$ such that $A^*$ is an isometry, cf. Subsection \ref{ssec Orthonormal Bases} and Theorem \ref{S A unitaer und selbstadjungiert fuer ONB}.

\begin{theorem}
	Suppose $\Hi$ is a real separable Hilbert space. Let $\cb{\phi_n: n \in\N}$ be a Parseval frame for $\Hi$ and let $A\in \mathcal{L}(\Hi)$. Then, the following two statements are equivalent:
	\begin{enumerate}[label=(\roman*)]
		\item $\cb{ B\phi_n := \frac 1{\sqrt 2} \rb{\phi_n+iA\phi_n}: n\in\N}$ is a Parseval frame in $\Hi+i\Hi$.
		\item $A$ is unitary and self-adjoint (thus $A^*$ is an isometry).
	\end{enumerate}
\end{theorem}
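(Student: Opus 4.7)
The plan is to reduce the claim to Lemma \ref{Pframeb} applied to the operator $B := \frac{1}{\sqrt{2}}(\id + iA)$ on the complex Hilbert space $\Hi + i\Hi$. First, I would observe that $\{\phi_n:n\in\N\}\subset\Hi$, being a Parseval frame for $\Hi$, is automatically a Parseval frame for $\Hi + i\Hi$: for $f = f_r + if_i \in \Hi + i\Hi$ with $f_r,f_i\in\Hi$, the expansion $\abs{\langle f,\phi_n\rangle}^2 = \abs{\langle f_r,\phi_n\rangle}^2+\abs{\langle f_i,\phi_n\rangle}^2$ together with the Pythagorean theorem yields
\[
\sum_{n\in\N}\abs{\langle f,\phi_n\rangle}^2 = \norm{f_r}^2_\Hi + \norm{f_i}^2_\Hi = \norm{f}^2.
\]
Hence Lemma \ref{Pframeb}, applied in $\Hi+i\Hi$, tells us that (i) is equivalent to $B^*$ being an isometry on $\Hi+i\Hi$, i.e.\ to $BB^* = \id$ on $\Hi+i\Hi$.

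The core of the argument is then the same computation that already appeared in the proof of Theorem \ref{S A unitaer und selbstadjungiert fuer ONB}:
\[
BB^* = \tfrac{1}{2}(\id + iA)(\id - iA^*) = \tfrac{1}{2}(\id + AA^*) + \tfrac{i}{2}(A - A^*).
\]
For the direction (i)$\Rightarrow$(ii), I would evaluate both sides on an arbitrary $f\in\Hi$. Since $A$ and $A^*$ map $\Hi$ into $\Hi$, the vector $\tfrac{1}{2}(\id+AA^*)f$ lies in $\Hi$ while $\tfrac{i}{2}(A-A^*)f$ lies in $i\Hi$. The equation $BB^* f = f$ therefore splits into two independent conditions on $\Hi$, namely $\tfrac{1}{2}(\id+AA^*)f = f$ and $\tfrac{1}{2}(A-A^*)f = 0$. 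This gives $A=A^*$ and $AA^* = \id$; combining them yields $A^*A = A^2 = AA^* = \id$, so $A$ is both self-adjoint and unitary, which is (ii). The converse (ii)$\Rightarrow$(i) is immediate: feeding $A=A^*$ and $AA^* = \id$ back into the displayed identity gives $BB^* = \id$, so $B^*$ is an isometry and (i) follows from Lemma \ref{Pframeb}.

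The only delicate point is the splitting of the operator identity $BB^* = \id$ into its ``real'' and ``imaginary'' parts; this is not automatic for complex-linear operators, and relies precisely on the fact that $A$ is a real operator, so that $\id+AA^*$ and $A-A^*$ preserve $\Hi$. Once this observation is in place, the proof is a direct calculation mirroring the orthonormal basis case, with the Parseval-frame condition ``$B^*$ isometric'' replacing the unitarity condition ``$B$ unitary'' used in Theorem \ref{S A unitaer und selbstadjungiert fuer ONB}.
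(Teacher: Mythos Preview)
Your proof is correct and follows essentially the same approach as the paper: reduce via Lemma \ref{Pframeb} to the condition $BB^*=\id$, expand $BB^* = \tfrac{1}{2}(\id+AA^*)+\tfrac{i}{2}(A-A^*)$, and split into real and imaginary parts on $\Hi$. Your (ii)$\Rightarrow$(i) is in fact slightly cleaner than the paper's, which takes a small detour through the spectrum of $A$ to argue that $B$ is invertible and hence unitary, whereas you observe directly that $BB^*=\id$ already means $B^*$ is an isometry.
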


\begin{proof}
	(i) $\Rightarrow$ (ii)
	
	Due to Lemma \ref{Pframeb}, $\cb{\frac 1{\sqrt 2} \rb{\phi_n+iA\phi_n}: n\in\N}$ is a Parseval frame in $\Hi+i\Hi$ if and only if the operator $\frac 1{\sqrt 2} (\id+iA)^* = \frac{1}{\sqrt{2}}\left(\id - iA^*\right)$ is an isometry. This is equivalent to \[\id_{\Hi + i\Hi} = \frac{1}{\sqrt{2}}\left(\id_{\Hi + i\Hi} + iA\right) \frac{1}{\sqrt{2}}\left(\id_{\Hi + i\Hi} + iA\right)^* = \frac{1}{2}\left(\id_{\Hi + i\Hi} + AA^*\right) + \frac{i}{2}\left(A - A^*\right).\] 
	
	For  $f \in \Hi$ this implies from the real part $AA^* = \id_\Hi$, i.e. $A^*$ is an isometry, and from the imaginary part $A = A^*$, i.e. $A$ is self-adjoint. Thus $\id_\Hi = A^*A = AA^*$.
	
	(ii) $\Rightarrow$ (i)
	Conversely, if $A$ is unitary then $A$ is continuously invertible. If $A$ is self-adjoint, the spectrum is real. Thus $\frac{1}{\sqrt{2}}\left(\id_{\Hi + i\Hi} + iA\right)$ is continuously invertible and we get \[\id_{\Hi + i\Hi} = \frac{1}{\sqrt{2}}\left(\id_{\Hi + i\Hi} + iA\right) \frac{1}{\sqrt{2}}\left(\id_{\Hi + i\Hi} + iA\right)^*.\] Hence $B  = {\sqrt{2}}\left(\id_{\Hi + i\Hi} + iA\right)$ is unitary itself, and by Lemma \ref{Lem:Characterization_parseval_frames} $B^*$ is an isometry. Using Lemma \ref{Pframeb} the claim follows.
\end{proof}

\begin{remark}
	Interestingly our ansatz directly implies that the operator $A$ is unitary and self-adjoint. Thus we obtain the very same conditions which we got in the orthonormal basis case. This directly gives us a few noticeable facts.
	\begin{enumerate}[label=(\roman*)]
		\item In contrast to the general frame case the rebrickability relation is symmetric again because $A$ is invertible.
		\item To get a true Parseval frame (which is not an orthonormal basis) it is impossible to start with an orthonormal basis in the real part because the imaginary part and the rebricked Parseval frame would be an orthonormal basis again.
	\end{enumerate}
\end{remark}

\section{Summary}
As an overview, the following table shows the different types of generating systems $\{\phi_n:\ n\in\N\}$ in a Hilbert space $\Hi$ and the respective equivalent conditions for an operator $A\in \mathcal{L}(\Hi)$ such that
\begin{enumerate}
	\item[a)] $\{A\phi_n:\ n\in\N\}$ forms a frame / Parseval frame / Riesz basis / orthonormal basis,
	\item[b)] $\cb{B\phi_n := \rb{\phi_n+i  A\phi_n}:\ n\in\N}$ forms a frame / Riesz basis or\\ $\cb{B\phi_n := \frac 1{\sqrt 2}\rb{\phi_n+i  A\phi_n}:\ n\in\N}$ forms a Parseval frame / orthonormal basis.
\end{enumerate}
\vspace{1cm}
\begin{tabular}{|c|c|c|c|c|}
	\hline
	$\{\phi_n:\ n\in\N\}$ is a & Frame & Parseval frame & Riesz basis & ONB \\ \hline
	$\{A\phi_n:\ n\in\N\}$ & \multirow{2}{*}{$A$ surjective} & \multirow{2}{*}{$A^*$ isometry} & \multirow{2}{*}{$A$ invertible} & \multirow{2}{*}{$A$ unitary} \\
	is the same iff & & & & \\ \hline
	$\{B\phi_n:\ n\in\N\}$ & \multirow{2}{*}{$-i \notin \sigma_\mathrm{app}(A^*)$} & $A$ unitary \& & \multirow{2}{*}{$-i\notin\sigma(A^*)$} & $A$ unitary \& \\
	is the same iff &  & self-adjoint &  & self-adjoint \\ \hline
\end{tabular}
\vspace{0.5cm}\\
As we have observed not all rebrickable couples of frames can be obtained by this approach as can be seen in the Examples \ref{ex:nonrebrickable_frames} and \ref{ex:nonrebrickable_parseval_frames}. An approach to overcome this restriction is shown in Subsection \ref{subsec:rebricking_incomp_frames}.



 
\bibliographystyle{elsarticle-num} 
\bibliography{literature}





\end{document}